\newtheorem{theorem}{Theorem}[section]
\newtheorem{lemma}[theorem]{Lemma}
\newtheorem{proposition}[theorem]{Proposition}
\newtheorem{corollary}[theorem]{Corollary}
\newtheorem*{theorem*}{Theorem}
\theoremstyle{definition}
\newtheorem{notation}[theorem]{Notation}
\newtheorem{remark}[theorem]{Remark}
\newtheorem{definition}[theorem]{Definition}
\newtheorem{example}[theorem]{Example}
\def\edge(#1,#2){(e_{#1},#2)}
\newcommand{\Xset}[1]{X(e_{#1})}
\numberwithin{equation}{section}
\newcommand{\N}{\mathbb{N}}
\newcommand{\C}{\mathbb{C}}
\newcommand{\T}{\mathbb{T}}
\newcommand{\G}{\mathcal{G}}
\newcommand{\cH}{\mathcal{H}}
\newcommand{\A}{\mathfrak{A}}
\newcommand{\wt}[1]{\widetilde{#1}}
\newcommand{\Pow}{\mathcal{P}}
\newcommand{\rg}{\textnormal{rg}}
\newcommand{\fin}{\textnormal{fin}}
\DeclareMathOperator{\spa}{span}
\DeclareMathOperator{\cspa}{\overline{span}}
\newcommand{\lcm}{\operatorname{lcm}}
\newcommand{\rom}{\renewcommand{\labelenumi}{{\rm (\roman{enumi})}}%
\renewcommand{\itemsep}{0pt}}
\newcommand{\Ca}{$C^*$-al\-ge\-bra }
\newcommand{\CA}{$C^*$-al\-ge\-bra}
\newcommand{\Csa}{$C^*$-sub\-al\-ge\-bra }
\newcommand{\Csas}{$C^*$-sub\-al\-ge\-bras }
\newcommand{\shom}{$*$-ho\-mo\-mor\-phism }
\begin{document}
\title[Graph algebras, Exel-Laca algebras, and ultragraph algebras]
{Graph algebras, Exel-Laca algebras, and ultragraph algebras coincide up to Morita equivalence}

\author{Takeshi Katsura}
\address{Takeshi Katsura, Department of Mathematics\\ Keio University\\
Yokohama, 223-8522\\ JAPAN}
\email{katsura@math.keio.ac.jp}

\author{Paul S. Muhly}
\address{Paul S.~Muhly, Department of Mathematics\\ University of Iowa\\
Iowa City\\ IA 52242-1419\\ USA}
\email{pmuhly@math.uiowa.edu}

\author{Aidan Sims}
\address{Aidan Sims, School of Mathematics and Applied Statistics\\
University of Wollongong\\
NSW 2522\\
AUSTRALIA} \email{asims@uow.edu.au}

\author{Mark Tomforde}
\address{Mark Tomforde \\ Department of Mathematics\\ University of Houston\\
Houston \\ TX 77204-3008\\ USA}
\email{tomforde@math.uh.edu}

\thanks{
The first author was supported by the Fields Institute, the
second author was supported by NSF Grant DMS-0070405, the third
author was supported by the Australian Research Council, and the fourth author was supported by an internal grant from the University of Houston Mathematics Department.}

\date{September 1, 2008; minor revisions December 7, 2008}
\subjclass[2000]{Primary 46L55}

\keywords{$C^*$-algebras, graph algebras, Exel-Laca algebras, ultragraph algebras, Morita equivalence}

\begin{abstract}
We prove that the classes of graph algebras, Exel-Laca algebras, and ultragraph algebras coincide up to Morita
equivalence. This result answers the long-standing open question of whether every Exel-Laca algebra is Morita equivalent to a graph algebra.  Given an ultragraph $\G$ we
construct a directed graph $E$ such that $C^*(\G)$ is
isomorphic to a full corner of $C^*(E)$.  As applications, we characterize real rank zero for ultragraph algebras and describe quotients of ultragraph algebras by gauge-invariant ideals.
\end{abstract}

\maketitle

\section{Introduction}

In 1980 Cuntz and Krieger introduced a class of $C^*$-algebras
associated to finite matrices \cite{CK}. Specifically, if $A$ is an $n
\times n$ $\{0,1\}$-matrix with no zero rows, then the
Cuntz-Krieger algebra $\mathcal{O}_A$ is generated by partial
isometries $S_1, \dots , S_n$ such that $S_i^*S_i = \sum_{A(i,j) =1} S_jS_j^*$.
Shortly thereafter Enomoto, Fujii, and Watatani \cite{EW, FW, Wat} observed
that Cuntz and Krieger's algebras could be
described very naturally in terms of finite directed graphs. Given a
finite directed graph $E$ in which every vertex emits at least
one edge, the corresponding $C^*$-algebra $C^*(E)$ is generated
by mutually orthogonal projections $P_v$ associated to the vertices and partial
isometries $S_e$ associated to the edges such that $S_e^*S_e = P_{r(e)}$ and $P_v = \sum_{s(e)=v} S_eS_e^*$, where $r(e)$ and $s(e)$ denote the range and source of an edge $e$.

Attempting to generalize the theory of Cuntz-Krieger algebras
to countably infinite generating sets resulted in two very
prominent classes of $C^*$-algebras: graph $C^*$-algebras and
Exel-Laca algebras. To motivate our results we briefly describe
each of these classes.  The key issue for both generalizations
is that infinite sums of projections, which a naive approach
would suggest, cannot converge in norm.

To generalize graph $C^*$-algebras to infinite graphs, the key
modification is to require the relation $P_v = \sum_{s(e)=v}
S_eS_e^*$ to hold only when the sum is finite and nonempty.
This theory has been explored extensively in many papers (see
\cite{KPR, KPRR, BPRS2000, FLR} for seminal results, and
\cite{Raeburn2005} for a survey). Graph $C^*$-algebras include
many $C^*$-algebras besides the Cuntz-Krieger algebras; in
particular, graph $C^*$-algebras include the Toeplitz algebra,
continuous functions on the circle, all finite-dimensional
$C^*$-algebras, many AF-algebras, many purely infinite simple
$C^*$-algebras, and many Type I $C^*$-algebras. From a
representation-theoretic point of view, the class of graph
$C^*$-algebras is broader still: every AF-algebra is Morita
equivalent to a graph $C^*$-algebra, and any Kirchberg algebra
with free $K_1$-group is Morita equivalent to a graph
$C^*$-algebra.

The approach taken for Exel-Laca algebras is to allow the
matrix $A$ to be infinite. Here rows containing
infinitely many nonzero entries lead to an infinite sum of
projections, which does not give a sensible relation. However, Exel and Laca observed that even when
rows of the matrix contain infinitely many nonzero entries, formal combinations of the Cuntz-Krieger relations can result in relations of the form $\prod_{i \in X} S^*_i S_i \prod_{j \in Y} (1 - S^*_j S_j) = \sum_{k \in Z} S_kS_k^*$, where $X$, $Y$, and $Z$ are all finite.  It is precisely these finite relations that are imposed in the definition of the Exel-Laca algebra. As with the graph
$C^*$-algebras, the Exel-Laca algebras include many classes of
$C^*$-algebras in addition to the Cuntz-Krieger algebras, and
numerous authors have studied their structure \cite{EL, EL2,
RS, Szy4}.

Without too much effort, one can show that neither the class of
graph $C^*$-algebras nor the class of Exel-Laca algebras is a
subclass of the other.  Specifically, there exist graph
$C^*$-algebras that are not isomorphic to any Exel-Laca algebra
\cite[Proposition A.16.2]{Tom-thesis}, and there exist
Exel-Laca algebras that are not isomorphic to any graph
$C^*$-algebra \cite[Example~4.2 and Remark~4.4]{RS}.  This
shows, in particular, that there is merit in studying both
classes and that results for one class are not special cases of
results for the other.  It also begs the question, ``How different
are the classes of graph $C^*$-algebras and Exel-Laca
algebras?"  Although each contains different isomorphism
classes of $C^*$-algebras, a natural follow-up question is to
ask about Morita equivalence.  Specifically,
\vskip2ex
\noindent \textsc{Question 1:} Is every graph $C^*$-algebra Morita
    equivalent to an Exel-Laca algebra?
\vskip2ex
\noindent \textsc{Question 2:} Is every Exel-Laca algebra Morita
    equivalent to a graph $C^*$-algebra?
\vskip2ex

While the question of isomorphism is easy to sort out, the
Morita equivalence questions posed above are much more
difficult.  Question~1 was answered in the affirmative by
Fowler, Laca, and Raeburn in \cite{FLR}.  In particular, if
$C^*(E)$ is a graph $C^*$-algebra, then one may form a graph
$\tilde{E}$ with no sinks or sources, by adding tails to the
sinks of $E$ and heads to the sources of $E$.  A standard
argument shows that $C^*(\tilde{E})$ is Morita equivalent to
$C^*(E)$ (see \cite[Lemma~1.2]{BPRS2000}, for example), and
Fowler, Laca, and Raeburn proved that the $C^*$-algebra of a
graph with no sinks and no sources is isomorphic to an
Exel-Laca algebra \cite[Theorem~10]{FLR}.

On the other hand, Question~2 has remained an open problem for
nearly a decade. The various invariants
calculated for graph $C^*$-algebras and Exel-Laca algebras have
not been able to discern any Exel-Laca algebras that are not
Morita equivalent to a graph $C^*$-algebra.  For example, the
attainable $K$-theories for both classes are the same: all
countable free abelian groups arise as $K_1$-groups together
with all countable abelian groups as $K_0$-groups (see
\cite{Szy2} and \cite{EL2}). Nevertheless, up to this
point there has been no method for constructing a graph $E$
from a matrix $A$ so that $C^*(E)$ is Morita equivalent to
$\mathcal{O}_A$.

In this paper we provide an affirmative answer to Question~2
using a generalization of a graph known as an ultragraph.
Ultragraphs and the associated $C^*$-algebras were
introduced by the fourth author to unify the
study of graph $C^*$-algebras and Exel-Laca algebras \cite{Tom,
Tom2}.  An ultragraph is a generalization of a graph in which
the range of an edge is a (possibly infinite) set of vertices,
rather than a single vertex. The ultragraph $C^*$-algebra is
then determined by generators satisfying relations very similar
to those for graph $C^*$-algebras (see Section~\ref{sec:prelims}). The fourth author has shown
that every graph algebra is isomorphic to an ultragraph algebra
\cite[Proposition~3.1]{Tom}, every Exel-Laca algebra is
isomorphic to an ultragraph algebra \cite[Theorem~4.5,
Remark~4.6]{Tom}, and moreover there are ultragraph algebras
that are not isomorphic to any graph algebra and are not
isomorphic to any Exel-Laca algebra \cite[\S 5]{Tom2}.  Thus
the class of ultragraph algebras is strictly larger than the union of the two classes of Exel-Laca algebras and of graph algebras.

In addition to providing a framework for studying graph
algebras and Exel-Laca algebras simultaneously, ultragraph
algebras also give an alternate viewpoint for studying
Exel-Laca algebras.   In particular, if $A$ is a (possibly
infinite) $\{0,1\}$-matrix, and if we let $\G$ be ultragraph
with edge matrix $A$, then the Exel-Laca algebra
$\mathcal{O}_A$ is isomorphic to the ultragraph algebra
$C^*(\G)$.  In much of the seminal work done on Exel-Laca
algebras \cite{EL, EL2, Szy4}, the structure of the
$C^*$-algebra $\mathcal{O}_A$ is related to properties of the
infinite matrix $A$ (see \cite[\S 2--\S 10]{EL} and also
\cite[Definition~4.1 and Theorem~4.5]{EL2}) as well as
properties of an associated graph $\textrm{Gr} (A)$ with edge
matrix $A$ (see \cite[Definition~10.5, Theorem~13.1,
Theorem~14.1, and Theorem~16.2]{EL} and
\cite[Theorem~8]{Szy4}).  Unfortunately, these correspondences
are often of limited use since the properties of the matrix can
be difficult to visualize, and the graph $\textrm{Gr} (A)$ does
not entirely reflect the structure of the Exel-Laca algebra
$\mathcal{O}_A$ (see \cite[Example~3.14 and
Example~3.15]{Tom2}).  Another approach is to represent
properties of the Exel-Laca algebra in terms of the ultragraph
$\G_A$ \cite{Tom, Tom2}. This is a useful technique because it
gives an additional way to look at properties of Exel-Laca
algebras, the ultragraph $\G_A$ reflects much of the fine
structure of the Exel-Laca algebra $\mathcal{O}_A$, and
furthermore the interplay between the ultragraph and the
associated $C^*$-algebra has a visual nature similar to what
occurs with graphs and graph $C^*$-algebras.

In this paper we prove that the classes of graph algebras,
Exel-Laca algebras, and ultragraph algebras coincide up to
Morita equivalence.  This provides an affirmative answer to
Question~2 above, and additionally shows that no new Morita
equivalence classes are obtained in the strictly larger class
of ultragraph algebras.  Given an ultragraph $\G$ we build a graph $E$ with the property that $C^*(\G)$ is isomorphic to a full corner of $C^*(E)$.   Combined with other known results, this shows our three classes of $C^*$-algebras coincide up to
Morita equivalence.  Since our construction is concrete, we are also able to use graph algebra results to
analyze the structure of ultragraph algebras.  In particular,
we characterize real rank zero for ultragraph
algebras, and describe the quotients of ultragraph algebras by
gauge-invariant ideals.  Of course, these structure results
also give corresponding results for Exel-Laca algebras as
special cases, and these results are new as well.  In addition,
our construction implicitly gives a method for taking a
$\{0,1\}$-matrix $A$ and forming a graph $E$ with the property
that the Exel-Laca algebra is isomorphic to a full corner of
the graph $C^*$-algebra $C^*(E)$ (see
Remark~\ref{rmk:E-L-full-corn}).

It is interesting to note that we have used ultragraphs to answer Question~2, even though
this question is intrinsically only about graph $C^*$-algebras and Exel-Laca algebras.
Indeed it is difficult to see how to answer Question~2 without at least implicit recourse
to ultragraphs.  This provides additional evidence that
ultragraphs are a useful and natural tool for exploring the
relationship between Exel-Laca algebras and graph $C^*$-algebras.

This paper is organized as follows.  After some preliminaries
in Section~\ref{sec:prelims}, we describe our construction in
Section~\ref{graph-sec} and explain how to build a graph $E$
from an ultragraph $\G$.  Since this construction is somewhat
involved, we also provide a detailed example for a particular
ultragraph at the end of this section.  In
Section~\ref{Path-sec} we analyze the path structure of the
graph constructed by our method.  In Section~\ref{corner-sec}
we show that there is an isomorphism $\phi$ from the ultragraph
algebra $C^*(\G)$ to a full corner $P C^*(E) P$ of $C^*(E)$,
and we use this result to show that an ultragraph algebra $C^*(\G)$ has
real rank zero if and only if $\G$ satisfies Condition~(K).  In
Section~\ref{ideal-sec}, we prove that the induced bijection $I
\mapsto C^*(E) \phi(I) C^*(E)$ restricts to a bijection between gauge-invariant ideals of
$C^*(\G)$ and gauge-invariant ideals of $C^*(E)$. In
Section~\ref{quotient-sec}, we give a complete description of
the gauge-invariant ideal structure of ultragraph algebras
commenced in \cite{KMST}, by describing the quotient of an
ultragraph algebra by a gauge-invariant ideal.

\section{Preliminaries} \label{sec:prelims}

For a set $X$, let $\Pow(X)$ denote the collection of all
subsets of $X$.  We recall from \cite{Tom} the definitions of an ultragraph and
of a Cuntz-Krieger family for an ultragraph.

\begin{definition}(\cite[Definition~2.1]{Tom})
An \emph{ultragraph} $\G = (G^0, \G^1, r,s)$ consists of a
countable set of vertices $G^0$, a countable set of edges
$\G^1$, and functions $s\colon \G^1\rightarrow G^0$ and
$r\colon \G^1 \rightarrow \Pow(G^0)\setminus\{\emptyset\}$.
\end{definition}

The original definition of a Cuntz-Krieger family for an
ultragraph $\G$ appears as \cite[Definition~2.7]{Tom}. However,
for our purposes, it will be more convenient to work with the
Exel-Laca $\G$-families of \cite[Definition~3.3]{KMST}.  To give this definition, we first recall that for finite subsets $\lambda$ and $\mu$ of $\G^1$, we define
\[
r(\lambda,\mu) := \bigcap_{e \in \lambda} r(e) \setminus \bigcup_{f \in \mu} r(f)\in \Pow(G^0).
\]

\begin{definition}\label{dfn:Cond(EL)}
Let $\G =(G^0, \G^1, r,s)$ be an ultragraph. A collection of
projections $\{p_v : v\in G^0\}$ and $\{q_e : e \in \G^1\}$ is
said to satisfy {\em Condition (EL)} if the following hold:
\begin{enumerate}
\item the elements of $\{p_v : v\in G^0\}$ are pairwise
   orthogonal,
\item the elements of $\{q_e : e \in \G^1\}$ pairwise
   commute,
\item $p_v q_e= p_v$ if $v\in r(e)$, and $p_v q_e=0$ if $v
   \notin r(e)$,
\item $\prod_{e\in \lambda}q_e\prod_{f\in \mu}(1-q_f)
   =\sum_{v\in r(\lambda,\mu)}p_v$ for all finite subsets
   $\lambda,\mu$ of $\G^1$ such that $\lambda\cap
   \mu=\emptyset$, $\lambda\neq\emptyset$ and
   $r(\lambda,\mu)$ is finite.
\end{enumerate}
\end{definition}

Given an ultragraph $\G$, we write $G^0_{\rg}$ for the set $\{v
\in G^0 : s^{-1}(v)\text{ is finite and nonempty}\}$ of
\emph{regular vertices} of $\G$.

\begin{definition}\label{dfn:EL-G-fam}
For an ultragraph $\G = (G^0, \G^1, r,s)$, an \emph{Exel-Laca
$\G$-family} is a collection of projections $\{p_v : v\in
G^0\}$ and partial isometries $\{s_e : e \in \G^1\}$ with
mutually orthogonal final projections for which
\begin{enumerate}
\item the collection $\{p_v : v\in G^0\} \cup \{s_e^*s_e :
   e\in \G^1\}$ satisfies Condition~(EL),
\item $s_es_e^* \leq p_{s(e)}$ for all $e \in \G^1$,
\item $p_v = \sum_{s(e) = v} s_es_e^*$ for $v\in
   G^0_{\rg}$.
\end{enumerate}
\end{definition}

Our use of the notation $\G^0$ in what follows will also be in
keeping with \cite{KMST} rather than with \cite{Tom, Tom2}.

By a \emph{lattice} in $\Pow(X)$, we mean a collection of
subsets of $X$ which is closed under finite intersections and
unions.  By an \emph{algebra} in $\Pow(X)$, we mean a lattice
in $\Pow (X)$ which is closed under taking relative
complements. As in \cite{KMST}, we denote by $\G^0$ the
smallest algebra in $\Pow(G^0)$ which contains both $\{\{v\} :
v \in G^0\}$ and $\{r(e) : e \in \G^1\}$ (by contrast, in
\cite{Tom, Tom2}, $\G^0$ denotes the smallest lattice in
$\Pow(G^0)$ containing these sets). A \emph{representation} of
an algebra $\mathfrak{A}$ in a $C^*$-algebra $B$ is a
collection $\{p_A : a \in \mathfrak{A}\}$ of mutually commuting
projections in $B$ such that $p_{A \cap B} = p_A p_B$, $p_{A
\cup B} = p_A + p_B - p_{A \cap B}$ and $p_{A \setminus B} =
p_A - p_{A \cap B}$ for all $A,B \in \mathfrak{A}$.

Given an Exel-Laca $\G$-family $\{p_v : v\in G^0\}$, $\{s_e : e
\in \G^1\}$ in a $C^*$-algebra $B$,
\cite[Proposition~3.4]{KMST} shows that there is a unique
representation $\{p_A : A \in \G^0\}$ of $\G^0$ such that
$p_{r(e)} = s^*_e s_e$ for all $e \in E^0$, and $p_{\{v\}} = p_v$
for all $v \in G^0$. In particular, given an Exel-Laca
$\G$-family $\{p_v, s_e\}$, we will without comment denote the
resulting representation of $\G^0$ by $\{p_A : A \in \G^0\}$;
in particular, $p_{r(e)}$ denotes $s^*_e s_e$, and
$p_{\{v\}}$ and $p_v$ are one and the same.

\section{A directed graph constructed from an ultragraph} \label{graph-sec}

The purpose of this section is to construct
a graph $E=(E^0,E^1,r_E,s_E)$
from an ultragraph $\G = (G^0, \G^1, r,s)$.  Our construction involves a choice of a listing of $\G^1$ and of a function $\sigma$ with certain properties described in Lemma~3.7; in particular, different choices of listings and of sigma will yield different graphs.
In Section~\ref{corner-sec},
we will prove that
the ultragraph algebra $C^*(\G)$
is isomorphic to a full corner of
the graph algebra $C^*(E)$, regardless of the choices made.

\begin{notation}
Fix $n\in\N = \{1,2,\ldots\}$, and $\omega\in\{0,1\}^n$. For
$i=1,2,\ldots,n$, we denote by $\omega_i \in \{0,1\}$ the
$i$\textsuperscript{th} coordinate of $\omega$, and we denote $n$ by $|\omega|$.

We express $\omega$ as $(\omega_1, \omega_2, \dots, \omega_n)$.
We define $(\omega, 0), (\omega, 1) \in
\{0,1\}^{n+1}$ by $(\omega, 0) := (\omega_1, \omega_2, \dots,
\omega_n, 0)$ and $(\omega, 1) := (\omega_1, \omega_2, \dots,
\omega_n, 1)$. For $m\in\N$ with $m\leq n$, we define
$\omega|_m\in\{0,1\}^m$ by $\omega|_m=(\omega_1, \omega_2,
\dots, \omega_m)$. The elements $(0,0,\dots, 0,0)$ and
$(0,0,\dots, 0, 1)$ in $\{0,1\}^n$ are denoted by $0^n$ and
$(0^{n-1},1)$.
\end{notation}

Let $\G$ be an ultragraph $(G^0, \G^1, r,s)$. Fix an ordering on
$\G^1 = \{e_1, e_2, e_3, \ldots\}$. (This list may be finite or
countably infinite.) Using the same notation as established in
\cite[Section~2]{KMST}, we define
$r(\omega):=\bigcap_{\omega_i=1}r(e_i)\setminus
\bigcup_{\omega_j=0}r(e_j)\subset G^0$ for
$\omega\in\{0,1\}^n\setminus \{0^n\}$, and $\Delta_n
:=\big\{\omega\in \{0,1\}^n\setminus \{0^n\}:
|r(\omega)|=\infty\big\}.$ If $\omega \in \{0,1\}^n$ and $i \in
\{0,1\}$ so that $\omega' := (\omega,i) \in \{0,1\}^{n+1}$, we
somewhat inaccurately write $r(\omega,i)$, rather than
$r((\omega,i))$,  for $r(\omega')$.

\begin{definition}\label{dfn:Delta0}
We define $\Delta:=\bigsqcup^\infty_{n=1}\Delta_n$. So for
$\omega\in \Delta$, we have $\omega \in \Delta_{|\omega|}$.
\end{definition}

\begin{remark}\label{rmk:omegai}
Since $r(\omega) = r(\omega, 0) \sqcup r(\omega, 1)$ for each
$\omega \in \Delta$, an element $\omega\in
\{0,1\}^{n}\setminus\{0^n\}$ is in $\Delta$ if and only if at
least one of the elements $(\omega, 0)$ and $(\omega, 1)$ is in
$\Delta$.
\end{remark}

\begin{definition}
We define $\Gamma_0 :=\{(0^n, 1) : n \ge 0, |r(0^n,1)| =
\infty\} \subset \Delta$, and $\Gamma_+ := \Delta \setminus
\Gamma_0$.
\end{definition}

We point out that $(0^n, 1)$ means $(1)$ when $n=0$.
Also, by Remark~\ref{rmk:omegai}, if  $n \in \N$, $\omega \in \Delta_{n+1}$, and
$\omega|_n \not= 0^n$, then $\omega|_n \in \Delta_n$.
Since $\omega \in \Delta_{n+1}$ satisfies $\omega|_n = 0^n$ if and only if
$\omega = (0^n,1) \in \Gamma_0$, it follows that $\Gamma_+ =
\{ \omega \in \Delta: |\omega| > 1 \text{ and } \omega|_{|\omega|-1} \in \Delta \}$.

\begin{definition}\label{dfn:X}
Let $W_+ := \bigcup_{\omega\in \Delta}r(\omega)\subset G^0$,
and $W_0 := G^0 \setminus W_+$.
\end{definition}

\begin{lemma}\label{lem:W_+=disjoint union}
We have $W_+ = \bigsqcup_{\omega \in
\Gamma_0} r(\omega)$.
\end{lemma}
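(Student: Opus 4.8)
The plan is to separate the statement into a set equality and a disjointness assertion. Since $\Gamma_0\subseteq\Delta$, the inclusion $\bigcup_{\omega\in\Gamma_0}r(\omega)\subseteq W_+$ is immediate from the definition $W_+=\bigcup_{\omega\in\Delta}r(\omega)$, so the whole content of the equality is the reverse inclusion. For that it suffices to prove the reduction claim: for every $\omega\in\Delta$ there is some $\tau\in\Gamma_0$ with $r(\omega)\subseteq r(\tau)$.

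The key observation is a monotonicity property: directly from the definition of $r(\cdot)$, if $n\ge 1$, $\omega\in\{0,1\}^{n+1}$, and $\omega|_n\ne 0^n$, then $r(\omega)=r(\omega|_n)\cap r(e_{n+1})$ when $\omega_{n+1}=1$ and $r(\omega)=r(\omega|_n)\setminus r(e_{n+1})$ when $\omega_{n+1}=0$, so in either case $r(\omega)\subseteq r(\omega|_n)$. Now prove the reduction claim by induction on $|\omega|$. If $\omega\in\Gamma_0$ (which in particular covers the case $|\omega|=1$, since the only element of $\Delta_1$ is $(1)=(0^0,1)$), take $\tau=\omega$. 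Otherwise $\omega\in\Gamma_+$, and by the characterization $\Gamma_+=\{\omega\in\Delta:|\omega|>1\text{ and }\omega|_{|\omega|-1}\in\Delta\}$ recorded just before Definition~\ref{dfn:X}, we have $\omega|_{|\omega|-1}\in\Delta$ with $|\omega|_{|\omega|-1}|<|\omega|$; moreover $\omega|_{|\omega|-1}\ne 0^{|\omega|-1}$, since otherwise $\omega$ would be $(0^{|\omega|-1},1)\in\Gamma_0$, contradicting $\omega\in\Gamma_+$. By the induction hypothesis there is $\tau\in\Gamma_0$ with $r(\omega|_{|\omega|-1})\subseteq r(\tau)$, and the monotonicity observation gives $r(\omega)\subseteq r(\omega|_{|\omega|-1})\subseteq r(\tau)$. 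This establishes $W_+=\bigcup_{\omega\in\Gamma_0}r(\omega)$.

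For disjointness, write the elements of $\Gamma_0$ as the sets $(0^k,1)\in\{0,1\}^{k+1}$ with $|r(0^k,1)|=\infty$, noting that $r(0^k,1)=r(e_{k+1})\setminus\big(r(e_1)\cup\cdots\cup r(e_k)\big)$. Take $k<l$ with $(0^k,1),(0^l,1)\in\Gamma_0$. Then $r(0^k,1)\subseteq r(e_{k+1})$; on the other hand $k+1\le l$, so $r(e_{k+1})$ occurs among the sets removed in the formula for $r(0^l,1)$, whence $r(0^l,1)\cap r(e_{k+1})=\emptyset$. Therefore $r(0^k,1)\cap r(0^l,1)=\emptyset$. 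Combining this with the previous paragraph yields $W_+=\bigsqcup_{\omega\in\Gamma_0}r(\omega)$.

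The only mildly delicate point is the reduction in the second paragraph: one must be sure the restriction process $\omega\mapsto\omega|_{|\omega|-1}$ stays inside $\Delta$ and avoids the strings $0^n$ until it lands in $\Gamma_0$, which is precisely what the characterization of $\Gamma_+$ provides, together with strict decrease of length and the fact that every element of $\Delta_1$ already lies in $\Gamma_0$. Everything else is a direct unwinding of the definition of $r(\cdot)$, so I anticipate no genuine obstacle.
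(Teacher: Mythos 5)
Your proof is correct and follows essentially the same route as the paper: the paper truncates $\omega$ directly to $\omega|_{m(\omega)}$ with $m(\omega)=\min\{k:\omega_k=1\}$, observes this lies in $\Gamma_0$ and contains $r(\omega)$, and notes disjointness of the $r(\omega)$ for distinct $\omega\in\Gamma_0$ by definition; your induction via the characterization of $\Gamma_+$ just reaches the same truncation one coordinate at a time, and your disjointness argument is the same one spelled out.
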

\begin{proof}
For $\omega\in \Delta$,  let $m(\omega) := \min\{k : \omega_k = 1\}$.
Then $\omega|_{m(\omega)} \in \Gamma_0$ and $r(\omega) \subset r(\omega|_{m(\omega)})$. Thus $W_+ = \bigcup_{\omega \in
\Gamma_0}r(\omega)$. Finally, the sets $r(\omega)$ and
$r(\omega')$ are disjoint for distinct $\omega, \omega' \in
\Gamma_0$ by definition.
\end{proof}

\begin{lemma}\label{lem:W_+ -> Delta^0}
There exists a function $\sigma\colon W_+\to \Delta$ such that
$v\in r(\sigma(v))$ for each $v\in W_+$, and such that
$\sigma^{-1}(\omega)$ is finite (possibly empty) for each
$\omega\in \Delta$.
\end{lemma}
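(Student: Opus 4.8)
The plan is to build $\sigma$ by choosing, for each $v \in W_+$, a single element $\omega \in \Delta$ with $v \in r(\omega)$, but doing so carefully enough that each $\omega \in \Delta$ is hit only finitely often. First I would observe that, by Lemma~\ref{lem:W_+=disjoint union}, every $v \in W_+$ lies in $r(\tau)$ for a \emph{unique} $\tau = \tau(v) \in \Gamma_0$; moreover $\tau(v) = (0^{k},1)$ for some $k \ge 0$, so $\tau(v)$ is determined by the single number $k = k(v)$, namely the least index $i$ with $e_i \in r^{-1}(\{\text{sets containing } v\})$ in the appropriate sense — concretely $k(v)+1 = \min\{i : v \in r(e_i)\}$. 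The difficulty is that a single $r(\tau)$ with $\tau \in \Gamma_0$ is typically infinite, so we cannot just set $\sigma(v) = \tau(v)$; we need to spread the infinitely many vertices of $r(\tau)$ across infinitely many elements of $\Delta$.

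The key step is therefore: fix $\tau = (0^k,1) \in \Gamma_0$ and partition $r(\tau)$ into finite pieces indexed by elements of $\Delta$ that refine $\tau$. By Remark~\ref{rmk:omegai} and iterating the decomposition $r(\omega) = r(\omega,0) \sqcup r(\omega,1)$, for each $n > k+1$ we have a finite disjoint decomposition $r(\tau) = \bigsqcup_{\omega} r(\omega)$ where $\omega$ ranges over the (at most $2^{n-k-1}$) elements of $\{0,1\}^n$ with $\omega|_{k+1} = \tau$; the pieces $r(\omega)$ with $\omega \notin \Delta$ are finite, while those with $\omega \in \Delta$ are the ones that remain infinite and must be subdivided further. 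So I would argue as follows. Enumerate $r(\tau) = \{v_1, v_2, \ldots\}$ (finite or infinite). Define a strictly increasing sequence of "levels" and, at level $n$, assign to $\sigma$ the first few as-yet-unassigned vertices of $r(\tau)$, distributing them among the finitely many $\omega \in \Delta$ of length $n$ that refine $\tau$, subject to $v \in r(\omega)$; since at each level there are only finitely many such $\omega$ and we assign only finitely many vertices, we can arrange that each such $\omega$ receives only finitely many vertices in total across all levels (e.g., assign at most one new vertex to each eligible $\omega$ at each level, which eventually exhausts $r(\tau)$ because any fixed $v$ eventually lies in a finite piece $r(\omega)$ with $\omega \notin \Delta$, hence is forced to be assigned by then). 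Carrying this out for every $\tau \in \Gamma_0$ — of which there are countably many — and noting that the sets $r(\tau)$ are pairwise disjoint so the assignments do not interfere, produces a function $\sigma \colon W_+ \to \Delta$ with $v \in r(\sigma(v))$ for all $v$.

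It remains to check that $\sigma^{-1}(\omega)$ is finite for each $\omega \in \Delta$. Given $\omega \in \Delta$, every $v \in \sigma^{-1}(\omega)$ satisfies $v \in r(\omega)$, and since $r(\omega) \subset r(\tau)$ for the unique $\tau \in \Gamma_0$ with $\omega|_{m(\omega)} = \tau$ (notation $m(\cdot)$ from the proof of Lemma~\ref{lem:W_+=disjoint union}), all such $v$ were assigned during the processing of that one $\tau$; by construction only finitely many vertices of $r(\tau)$ are ever assigned to $\omega$, so $\sigma^{-1}(\omega)$ is finite. The main obstacle is purely bookkeeping: ensuring that in the level-by-level assignment \emph{every} vertex of $r(\tau)$ eventually gets assigned (so that $\sigma$ is total) while \emph{no} single $\omega \in \Delta$ accumulates infinitely many preimages. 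The reason this is possible is the dichotomy from Remark~\ref{rmk:omegai}: a vertex $v \in r(\tau)$ either eventually falls into a finite cylinder $r(\omega)$ with $\omega \notin \Delta$ — in which case at that level it must be assigned, and there are only finitely many vertices in that cylinder competing for assignment — or it lies in $r(\omega)$ for arbitrarily long $\omega \in \Delta$, in which case we have unboundedly many chances to assign it without overloading any particular node. A clean way to package the construction, which I would ultimately prefer, is: well-order $r(\tau)$ as $v_1, v_2, \ldots$, and for the $j$-th vertex $v_j$ set $\sigma(v_j)$ to be any $\omega \in \Delta$ of length at least $j + |\tau|$ with $v_j \in r(\omega)$ and $\omega|_{|\tau|} = \tau$ if such an $\omega$ exists, and otherwise the shortest $\omega \in \{0,1\}^*\setminus\Delta$ with $v_j \in r(\omega)$, $\omega|_{|\tau|}=\tau$ — but one must then verify the latter $\omega$ still lies in $\Delta$, which need not hold, so in fact the level-by-level method above is the robust route and is what I would write out in detail.
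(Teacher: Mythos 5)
Your overall strategy is the right one and is essentially the paper's: split the vertices of $W_+$ according to whether they lie in $r(\omega)$ for only finitely many or for infinitely many $\omega\in\Delta$, handle the second kind by enumerating them and forcing $|\sigma(v_j)|$ to grow with $j$, and handle the first kind by a ``last chance'' assignment. But neither of the two concrete constructions you actually write down is correct as stated, and this lemma is exactly the kind of statement where the details are the content. First, the level-by-level scheme: your justification that it exhausts $r(\tau)$ (``any fixed $v$ eventually lies in a finite piece $r(\omega)$ with $\omega\notin\Delta$, hence is forced to be assigned by then'') is false precisely for the vertices that make the lemma nontrivial, namely those lying in $r(\omega)$ for arbitrarily long $\omega\in\Delta$; you acknowledge this dichotomy a few lines later, but the stated rule ``assign at most one new vertex to each eligible $\omega$ at each level'' then has no mechanism guaranteeing that such a vertex is ever assigned. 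It can also strand vertices of the first kind: if $r(\omega,0)$ is finite with several elements while $(\omega,1)\in\Delta$, then all of those elements have $\omega$ as their longest admissible target, and ``at most one per $\omega$'' leaves the rest unassigned unless some unspecified earlier step happened to catch them. Second, your ``clean'' version handles the vertices lying in infinitely many $r(\omega)$ correctly (requiring $|\sigma(v_j)|\ge j+|\tau|$ forces $\sigma^{-1}(\omega)$ to meet this set in at most $|\omega|-|\tau|$ points), but the fallback for the remaining vertices --- the shortest $\omega\notin\Delta$ containing $v_j$ --- does not land in the codomain $\Delta$, as you yourself observe, and you then defer back to the flawed level-by-level scheme rather than repairing it.

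The repair is a one-line rule, and it is what the paper does: for $v$ lying in $r(\omega)$ for only finitely many $\omega\in\Delta$, define $\sigma(v)$ to be the $\omega\in\Delta$ with $v\in r(\omega)$ of \emph{maximal} length (not the shortest non-$\Delta$ one). Finiteness of the fibres over such $v$ then follows because, by maximality, $\sigma^{-1}(\omega)$ meets this set of vertices only inside whichever of $r(\omega,0)$, $r(\omega,1)$ fails to lie in $\Delta$; those pieces are finite by definition of $\Delta$, and the intersection is empty when both $(\omega,0)$ and $(\omega,1)$ lie in $\Delta$. (Your $\tau$-by-$\tau$ organisation via Lemma~\ref{lem:W_+=disjoint union} is harmless but also unnecessary: the sets $r(\tau)$, $\tau\in\Gamma_0$, are pairwise disjoint, so one may as well define $\sigma$ globally, as the paper does.) With that rule in place your argument closes up and coincides with the paper's proof.
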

\begin{proof}
Let
\[
W_{\infty} := \{v \in W_+ : v \in r(\omega) \text{ for infinitely many $\omega
\in \Delta$}\}.
\]
For $v \in W_+ \setminus W_\infty$, we define $\sigma(v)$ to be
the element of $\Delta$ with $v \in r(\sigma(v))$ for which
$|\sigma(v)|$ is maximal.  Fix an ordering $\{v_1, v_2, v_3,
\dots\}$ of $W_{\infty}$, and fix $k \in \N$. The definition of
$W_\infty$ implies that the set $N_k := \{n \ge k : v_k \in
r(\omega)\text{ for some } \omega \in \Delta_n\}$ is infinite.
Let $n$ denote the minimal element of $N_k$. By
Lemma~\ref{lem:W_+=disjoint union}, there is a unique $\omega
\in \Delta_n$ such that $v_k \in r(\omega)$. We define
$\sigma(v_k) := \omega$.

By definition, we have $v\in r(\sigma(v))$ for each $v\in W_+$.
Fix $\omega\in \Delta$. We must show that $\sigma^{-1}(\omega)$
is finite. The set $\sigma^{-1}(\omega)\cap W_\infty$ is finite
because it is a subset of $\{v_1,v_2,\ldots,v_{|\omega|}\}
\subset W_\infty$. The set $\sigma^{-1}(\omega)\cap
(W_+\setminus W_\infty)$ is empty if both $(\omega, 0)$ and
$(\omega, 1)$ are in $\Delta$. Otherwise the set
$\sigma^{-1}(\omega)\cap (W_+\setminus W_\infty)$ coincides
with the finite set $r(\omega, i)$ for $i=0$ or $1$. Thus
$\sigma^{-1}(\omega)$ is finite.
\end{proof}

\begin{definition}
Fix a function $\sigma\colon W_+\to \Delta$ as in Lemma \ref{lem:W_+
-> Delta^0}. Extend $\sigma$ to a function $\sigma\colon G^0\to
\Delta\cup\{\emptyset\}$ by setting $\sigma(v)=\emptyset$ for
$v\in W_0$.
\end{definition}

We take the convention that $|\emptyset| = 0$ so that $v
\mapsto |\sigma(v)|$ is a function from $G^0$ to the
nonnegative integers. In particular, $v\in W_0$ if and only if
$|\sigma(v)|=0$, and $v\in W_+$ if and only if $|\sigma(v)|\geq
1$.

\begin{definition}\label{def:X_n}
For each $n\in\N$, we define a subset $\Xset{n}$ of $G^0\sqcup
\Delta$ by
\[
\Xset{n}:=\big\{v\in r(e_n):|\sigma(v)|<n\big\}\sqcup
\big\{\omega\in \Delta_n: \omega_n=1\big\}.
\]
\end{definition}

\begin{remark}\label{rmk:redundant e}
The occurrence of the symbol $e$ in the notation $\Xset{n}$ is
redundant; we might just as well label this set $X(n)$ or
$X_n$. However, we feel that it is helpful to give some hint
that the role of the $n$ in this notation is to pick out an
edge $e_n$ from our chosen listing of $\G^1$.
\end{remark}

\begin{lemma}\label{lem:Wnfin}
For each $n\in\N$, the set $\Xset{n}$ is nonempty and finite.
\end{lemma}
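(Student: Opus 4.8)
Lemma~\ref{lem:Wnfin} asserts that $X(e_n)$ is nonempty and finite for each $n \in \mathbb{N}$.

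The plan is to analyze the two pieces of the disjoint union $\Xset{n} = \{v \in r(e_n) : |\sigma(v)| < n\} \sqcup \{\omega \in \Delta_n : \omega_n = 1\}$ separately, establishing finiteness of each and nonemptiness of at least one of them (with a dichotomy depending on whether $|r(e_n)|$ is finite or infinite).

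For finiteness of the first piece, $\{v \in r(e_n) : |\sigma(v)| < n\}$: note that if $|\sigma(v)| < n$ then in particular $|\sigma(v)| \le n-1$, so this set is contained in $\bigsqcup_{\omega \in \Delta,\, |\omega| < n} \sigma^{-1}(\omega)$ intersected with $r(e_n)$. This is a finite union (there are only finitely many $\omega \in \{0,1\}^m \setminus \{0^m\}$ for each $m < n$, hence finitely many relevant $\omega \in \Delta$ with $|\omega| < n$), and each $\sigma^{-1}(\omega)$ is finite by Lemma~\ref{lem:W_+ -> Delta^0}; one also has to account for the vertices $v$ with $|\sigma(v)| = 0$, i.e.\ $v \in W_0 = G^0 \setminus W_+$, but these contribute $\{v \in r(e_n) \cap W_0\}$ --- however every $v \in r(e_n)$ lies in $r(\omega)$ for $\omega = (0^{n-1},1) \in \{0,1\}^n$ if $|r(e_n)| = \infty$ (so $\omega \in \Delta$) and hence $v \in W_+$; and if $|r(e_n)|$ is finite then $r(e_n) \cap W_0$ is finite anyway. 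For finiteness of the second piece, $\{\omega \in \Delta_n : \omega_n = 1\}$ is trivially finite since it is a subset of the finite set $\{0,1\}^n$.

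For nonemptiness I would split into cases. If $|r(e_n)| = \infty$, then $(0^{n-1},1) \in \Delta_n$ and its last coordinate is $1$, so $(0^{n-1},1)$ lies in the second piece of $\Xset{n}$, which is therefore nonempty. If $|r(e_n)| < \infty$, then the second piece may be empty, so I must produce a vertex $v \in r(e_n)$ with $|\sigma(v)| < n$. Here the key point is that for every $v \in r(e_n)$, the element $\sigma(v) \in \Delta \cup \{\emptyset\}$ satisfies a bound on its length: if $\sigma(v) = \emptyset$ then $|\sigma(v)| = 0 < n$ and we are done; if $\sigma(v) = \omega \in \Delta$ with $v \in r(\omega)$, I claim $|\omega| < n$ cannot fail for \emph{all} $v \in r(e_n)$. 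Indeed, if $|\omega| \ge n$ and $v \in r(\omega) \subseteq r(e_n)$ were forced for every such vertex, then since $r(\omega) \subseteq r(e_n)$ requires $\omega_n = 1$ (as $r(\omega)$ is disjoint from the complement of $r(e_n)$ unless $\omega$ excludes $e_n$), we would get $r(e_n) = \bigsqcup r(\omega)$ over such $\omega$ --- but actually the cleanest argument is: pick any $v \in r(e_n)$ (nonempty by definition of an ultragraph, since $r(e_n) \in \Pow(G^0) \setminus \{\emptyset\}$); if $v \in W_0$ then $|\sigma(v)| = 0 < n$; if $v \in W_+$, observe that $v \in r(e_n) = r((0^{n-1},1))$ when $(0^{n-1},1) \in \Delta$, i.e.\ when $|r(e_n)| = \infty$, contradicting our case assumption, so in the finite case we need a different route --- and here I would instead argue directly that $r(e_n)$ finite forces, for the specific vertex realizing the \emph{minimal} $|\sigma(v)|$ over $v \in r(e_n)$, that this minimum is $< n$: if every $v \in r(e_n)$ had $|\sigma(v)| \ge n$, then writing $\omega = \sigma(v)$ we'd have $v \in r(\omega)$ with $|\omega| \ge n$, and since distinct such $\omega$ give disjoint $r(\omega)$ covering the finite set $r(e_n)$ while each $\sigma^{-1}(\omega) \cap r(\omega)$ is finite, this is consistent --- so the real content must come from the \emph{definition} of $\sigma$ on $W_+ \setminus W_\infty$: there $\sigma(v)$ has \emph{maximal} length, but if $v \in r(e_n)$ with $|r(e_n)| < \infty$, then... hmm.

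The main obstacle, then, is precisely the nonemptiness in the case $|r(e_n)| < \infty$: one must show that some $v \in r(e_n)$ satisfies $|\sigma(v)| < n$. I expect the resolution uses the following observation: since $|r(e_n)| < \infty$, the element $(0^{n-1}, 1) = $ ``$e_n$ alone'' has $r(0^{n-1},1) = r(e_n)$ finite, so $(0^{n-1},1) \notin \Delta_n$. Now take any $v \in r(e_n)$; I claim $|\sigma(v)| \le n-1$. If $v \in W_0$ this is clear. If $v \in W_+$, then $\sigma(v) = \omega \in \Delta$ with $v \in r(\omega)$; if $|\omega| \ge n$, then $\omega|_n$ is defined, $\omega|_n \ne 0^n$ (since $v \in r(\omega) \subseteq r(\omega|_n)$ and $r(0^n)$ is undefined/the relevant set is a genuine subset), in fact $v \in r(\omega) \subseteq r(\omega|_m)$ for $m = m(\omega) = \min\{k : \omega_k = 1\}$, and $r(\omega|_{m(\omega)}) = r((0^{m-1},1))$; this contains $v$ and $\omega|_{m(\omega)} \in \Gamma_0 \subseteq \Delta$ so $|r((0^{m-1},1))| = \infty$. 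But $v \in r(e_n)$; if $m(\omega) = n$ this says $|r(e_n)| = \infty$, contradiction; if $m(\omega) < n$ then... this still doesn't immediately bound $|\omega|$. I would therefore most likely conclude by the cleanest available route: $X(e_n)$ contains $\{v \in r(e_n) : v \in W_0\} \cup \{v \in r(e_n) : v \in W_\infty\}$ whenever those vertices have $|\sigma(v)| < n$ --- for $v \in W_\infty$, $\sigma(v)$ was chosen with $|\sigma(v)|$ \emph{minimal} among lengths $\ge$ its index, but crucially $v \in r(e_n)$ for our fixed $n$ means, for those $v \in W_\infty$ with index $k \le n$, the set $N_k$ contains all $m$ with $v \in r(\omega)$, $\omega \in \Delta_m$; since $|r(e_n)| < \infty$ we cannot have $\omega = (0^{m-1},1)$ there, so the minimal such $m$ could still be large. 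Given these complications I will, in the actual proof, simply handle the finite-range case by noting $r(e_n) \cap W_+ = \bigsqcup_{\omega \in \Gamma_0} (r(e_n) \cap r(\omega))$ is a \emph{finite} set meeting only \emph{finitely many} $\omega \in \Gamma_0$, pick the one of minimal length, and use that $\sigma$ restricted to $r(\omega)$ for that minimal $\omega$ has bounded length --- deferring the precise bookkeeping, since as a \emph{plan} the structure is: (1) first piece finite by Lemma~\ref{lem:W_+ -> Delta^0}; (2) second piece finite trivially; (3) if $|r(e_n)| = \infty$ the second piece is nonempty via $(0^{n-1},1)$; (4) if $|r(e_n)| < \infty$, show the first piece is nonempty by a counting/minimality argument on $\sigma$-values of the finitely many vertices in $r(e_n)$.
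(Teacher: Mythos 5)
There are two genuine gaps here. First, your claim that $|r(e_n)|=\infty$ forces $(0^{n-1},1)\in\Delta_n$ (and, in your finiteness discussion, that every $v\in r(e_n)$ then lies in $r(0^{n-1},1)$ and hence in $W_+$) is false: by definition $r(0^{n-1},1)=r(e_n)\setminus\bigcup_{j<n}r(e_j)$, which can be finite or empty even when $r(e_n)$ is infinite (take $r(e_1)\supseteq r(e_2)$ both infinite and $n=2$). This breaks your nonemptiness argument in the infinite-range case, and also your claim that $r(e_n)\cap W_0=\emptyset$ there; that set can in fact be nonempty, and its finiteness requires the observation that any $v\in r(e_n)$ with $\sigma(v)=\emptyset$ lies in $r(\omega)$ for the unique $\omega\in\{0,1\}^n$ (necessarily with $\omega_n=1$) determined by $v$, and that this $\omega$ cannot lie in $\Delta_n$ (otherwise $v\in W_+$), so such $v$ are confined to a finite union of finite sets $r(\omega)$, $\omega\in\{0,1\}^n\setminus\Delta_n$. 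Second, you explicitly leave the nonemptiness unresolved in the case $|r(e_n)|<\infty$, which is precisely where your case split collapses.

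Both gaps are repaired by the single observation the paper uses, which makes the case split on $|r(e_n)|$ unnecessary: if $v\in r(e_n)$ has $|\sigma(v)|\ge n$, then $\omega:=\sigma(v)|_n$ satisfies $v\in r(\sigma(v))\subset r(\omega)$, so $\omega_n=1$ (because $v\in r(e_n)$) and $|r(\omega)|\ge|r(\sigma(v))|=\infty$, whence $\omega\in\Delta_n$ and $\omega\in\Xset{n}$. Consequently, if the first piece $\{v\in r(e_n):|\sigma(v)|<n\}$ is empty, then any $v$ in the nonempty set $r(e_n)$ produces the element $\sigma(v)|_n$ of the second piece. (In particular, when $|r(e_n)|<\infty$ no $\omega\in\Delta_n$ can have $\omega_n=1$, so this same argument shows every $v\in r(e_n)$ has $|\sigma(v)|<n$, i.e.\ the first piece is all of $r(e_n)$ --- the ``counting/minimality argument'' you were searching for is not needed.) Your finiteness arguments for the vertices with $0<|\sigma(v)|<n$ and for $\{\omega\in\Delta_n:\omega_n=1\}$ are correct and agree with the paper's.
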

\begin{proof}
Fix $n \in \N$.
To see that $\Xset{n}$ is nonempty, suppose that $\{v\in
r(e_n):|\sigma(v)|<n\big\} = \emptyset$.
Since $r(e_n) \not=\emptyset$,
there exists $v \in r(e_n)$ such that $|\sigma(v)|\ge n$.
Set $\omega = \sigma(v)|_n$.
Since $v \in r(\sigma(v))\subset r(\omega)$,
we have $\omega_n = 1$ and $|r(\omega)|=\infty$,
so $\omega \in \Xset{n}$.
Thus $\Xset{n}$ is nonempty.

For $v \in r(e_n)$ with $|\sigma(v)| = 0$,
the element $\omega \in \{0,1\}^{n}$ satisfying $v \in r(\omega)$
is not in $\Delta_n$ by definition.
Hence the set $\{v\in r(e_n):|\sigma(v)| = 0\}$ is finite,
because it is a subset of the union of finitely many
finite sets $r(\omega)$
where $\omega \in \{0,1\}^n\setminus \Delta_n$.
Since $\sigma^{-1}(\omega)$ is finite for all
$\omega \in \Delta$ and since $\{\omega \in \Delta : |\omega| <
n\}$ is finite,
we have $|\{v\in r(e_n):0<|\sigma(v)|<n\}| < \infty$.
Since $\Delta_n$ is finite,
$\{\omega\in \Delta_n: \omega_n=1\big\}$ is also finite,
and thus $\Xset{n}$ is finite.
\end{proof}

\begin{definition}\label{dfn:E}
We define a graph $E=(E^0,E^1,r_E,s_E)$ as follows:
\begin{align*}
E^0 &:=G^0\sqcup \Delta, \\
E^1 &:=\{\overline{x} : x\in W_+\sqcup\Gamma_+\}
\sqcup\big\{\edge(n,x) : e_n \in \G^1,\ x\in \Xset{n}\big\}, \\
r_E(\overline{x})&:=x, \quad
r_E(\edge(n,x)) := x, \\
s_E(\overline{v})&:=\sigma(v), \quad
s_E(\overline{\omega}):=\omega|_{|\omega|-1},\quad
s_E(\edge(n,x)) := s(e_{n}).
\end{align*}
\end{definition}

\begin{remark}
Just as in Remark~\ref{rmk:redundant e}, the symbol $e$
here is redundant; we could simply have denoted the edge
$\edge(n,x)$ by $(n,x)$. We have chosen notation which is
suggestive of the fact that the $n$ is specifying an
element of $\G^1$ via our chosen listing.
\end{remark}

For the following proposition, recall $E^0_\rg$ denotes the set
$\{v \in E^0 : s_E^{-1}(v)$ is finite and nonempty$\}$ of
regular vertices of $E$. Also recall from
Section~\ref{sec:prelims} that $G^0_\rg$ denotes the set of
regular vertices of $\G$.

\begin{proposition}\label{prp:reg verts}
We have $E^0_{\rg}=G^0_{\rg}\sqcup\Delta$.
\end{proposition}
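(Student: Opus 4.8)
The plan is to compute $s_E^{-1}(v)$ for every vertex $v \in E^0 = G^0 \sqcup \Delta$ directly from Definition~\ref{dfn:E}, identify when it is finite and nonempty, and match the answer against $G^0_\rg \sqcup \Delta$. There are two families of edges in $E^1$ to keep track of: the ``barred'' edges $\overline{x}$ for $x \in W_+ \sqcup \Gamma_+$, which have source $\sigma(v)$ (if $x = v \in W_+$) or $\omega|_{|\omega|-1}$ (if $x = \omega \in \Gamma_+$); and the edges $\edge(n,x)$ for $x \in \Xset{n}$, which have source $s(e_n)$. So I would organize the proof by considering separately the vertices lying in $G^0$ and those lying in $\Delta$.

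First I would handle $v \in G^0$. A barred edge $\overline{x}$ has source in $G^0$ only when... actually never, since $\sigma(v) \in \Delta$ and $\omega|_{|\omega|-1} \in \Delta \cup \{0^{n-1}\}$-type strings, none of which lie in $G^0$; so the only edges with source in $G^0$ are the edges $\edge(n,x)$ with $s(e_n) = v$. For such $v$, $s_E^{-1}(v) = \bigsqcup_{\{n : s(e_n) = v\}} \{\edge(n,x) : x \in \Xset{n}\}$. By Lemma~\ref{lem:Wnfin} each $\Xset{n}$ is finite and nonempty, so this set is finite and nonempty precisely when $\{n : s(e_n) = v\} = s^{-1}(v)$ (in the ultragraph) is finite and nonempty — that is, exactly when $v \in G^0_\rg$. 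This gives the $G^0$ half: $v \in E^0_\rg \cap G^0 \iff v \in G^0_\rg$.

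Next I would handle $\omega \in \Delta$, and the goal is to show $s_E^{-1}(\omega)$ is always finite and nonempty, so that all of $\Delta \subset E^0_\rg$. The edges with source $\omega$ are: (i) barred edges $\overline{v}$ with $v \in W_+$ and $\sigma(v) = \omega$ — there are finitely many (possibly zero) of these since $\sigma^{-1}(\omega)$ is finite by Lemma~\ref{lem:W_+ -> Delta^0}; (ii) the barred edge $\overline{\omega'}$ for $\omega' \in \Gamma_+$ with $\omega'|_{|\omega'|-1} = \omega$ — by the characterization $\Gamma_+ = \{\omega' \in \Delta : |\omega'| > 1, \omega'|_{|\omega'|-1} \in \Delta\}$ derived in the excerpt, together with Remark~\ref{rmk:omegai}, the strings $\omega'$ with $\omega'|_{|\omega'|-1} = \omega \in \Delta$ are exactly $(\omega,0)$ and $(\omega,1)$ restricted to those lying in $\Delta$ (equivalently $\Gamma_+$), and by Remark~\ref{rmk:omegai} at least one of them is in $\Delta$ — so this contributes at least one and at most two edges; (iii) edges $\edge(n,x)$ with $s(e_n) = \omega$, but $s(e_n) \in G^0$ always, so there are none. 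Thus $s_E^{-1}(\omega)$ is the disjoint union of the sets in (i) and (ii): finite because each piece is finite, and nonempty because (ii) already contributes at least one edge. Hence $\Delta \subseteq E^0_\rg$, and combining with (iii) no edge of type $\edge(n,x)$ ever has source in $\Delta$, these are all the edges. Putting the two halves together yields $E^0_\rg = G^0_\rg \sqcup \Delta$.

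The main obstacle — really the only subtle point — is part (ii): verifying that every $\omega \in \Delta$ has at least one out-edge, which forces using Remark~\ref{rmk:omegai} ($\omega \in \Delta$ iff $(\omega,0) \in \Delta$ or $(\omega,1) \in \Delta$) to produce an element of $\Gamma_+$ with source $\omega$, and then double-checking that this element genuinely lies in $\Gamma_+$ rather than $\Gamma_0$ (it does, since $|(\omega,i)| = |\omega| + 1 > 1$ and $(\omega,i)|_{|\omega|} = \omega \in \Delta$, matching the displayed characterization of $\Gamma_+$). Everything else is bookkeeping: checking which edge types can have a given source, and invoking the finiteness statements from Lemmas~\ref{lem:W_+ -> Delta^0} and~\ref{lem:Wnfin}.
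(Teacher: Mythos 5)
Your proposal is correct and follows essentially the same route as the paper's proof: split $E^0$ into $G^0$ and $\Delta$, use Lemma~\ref{lem:Wnfin} to see that $s_E^{-1}(v)$ is finite and nonempty exactly when $s^{-1}(v)$ is, and use Remark~\ref{rmk:omegai} together with the finiteness of $\sigma^{-1}(\omega)$ to see that every $\omega\in\Delta$ is regular. Your extra check that $(\omega,i)$ lands in $\Gamma_+$ rather than $\Gamma_0$ (so that $\overline{(\omega,i)}$ is genuinely an edge of $E$) is a detail the paper leaves implicit, and it is a worthwhile one to record.
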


\begin{proof}
For $v\in G^0$, we have $s_E^{-1}(v) = \bigsqcup_{s(e_n) = v}
\{\edge(n,x): x\in \Xset{n}\}$. Hence Lemma \ref{lem:Wnfin}
implies that $s_E^{-1}(v)$ is nonempty and finite if and only
if $s^{-1}(v)\subset\G^1$ is nonempty and finite. Hence $v\in
E^0_{\rg}$ if and only if $v\in G^0_{\rg}$. Thus $E^0_{\rg}\cap
G^0=G^0_{\rg}$.

Fix $\omega\in \Delta$. By Remark~\ref{rmk:omegai}, we have
$(\omega, i) \in \Delta$ for at least one of  $i=0$ and $i=1$. Since
$s_E\big(\overline{(\omega, i)}\big)=\omega$, the set
$s_E^{-1}(\omega)$ is not empty. By definition of $\sigma$, the
set $s_E^{-1}(\omega)$ is finite. Hence $\omega\in E^0_{\rg}$.
Thus $E^0_{\rg}=G^0_{\rg}\sqcup\Delta$.
\end{proof}

\subsection*{An example of the graph constructed from an
ultragraph.}

We present an example of our construction, including an
illustration. This concrete example will, we hope, help the
reader to visualise the general construction and to keep track
of notation. The definition of the ultragraph $\G$ in this
example may seem involved, but has been chosen to illustrate as
many of the features of our construction as possible with a
diagram containing relatively few vertices.

\begin{example}\label{ex:illustration}
For the duration of this example we will omit the parentheses
and commas when describing elements of $\Delta$. For example,
the element $(0,0,1) \in \{0,1\}^3$ will be denoted $001$.

We define an ultragraph $\G = (G^0, \G^1, r, s)$ as follows.
Let $G^0 := \{v_n : n \in \N\}$ and $\G^1 := \{e_n : n \in
\N\}$. For each $n \in \N$, let $s(e_n) := v_n$. For $k\in \N$,
let
\begin{align*}
r(e_{2k-1})
&:= \{ v_m : (k+2) \text{ divides } m \},\\
r(e_{2k})
&:= \{ v_m : \text{$m \le k^2$ and $4$ does not divide $m$}\}.
\end{align*}

We will construct a graph $E$ from $\G$ as described in the earlier part of this section.  We have displayed $\G$ and $E$ in Figure~\ref{pic:G,E}.  To construct $E$, we must first choose a function $\sigma : W_+ \to \Delta$ as in Lemma~\ref{lem:W_+ -> Delta^0}. To do this, we first describe
$\Delta$ and $W_+$.

Fix $n \ge 1$ and $\omega \in
\{0,1\}^n \setminus \{0^n\}$. If $\omega_{2k} = 1$ for some $k
\in \N$, then $r(\omega) \subset \{v_1,v_2, \dots, v_{k^2}\}$
is finite, so $\omega \notin \Delta$. If $\omega_{2k-1} = 0$
and $\omega_{2l-1} = 1$ for $k,l\in \N$ with $(k+2) \mid
(l+2)$, then $r(\omega) = \emptyset$, so $\omega \notin
\Delta$. Indeed, we have $\omega \in \Delta$ if and only if:
\begin{enumerate}
\item $\omega_i = 1$ for some $i$;
\item\label{it:odd only} $\omega_i = 0$ for all even $i$;
  and
\item\label{it:divisibility} whenever $k$ satisfies
  $\omega_{2k-1} = 0$, we have $(k + 2) \nmid \lcm\{l + 2
  : \omega_{2l-1} = 1\}$.
\end{enumerate}
For example, note that
$\omega = 1010100$ and $\omega = 1010000$ are not in $\Delta$
(see Figure~\ref{pic:G,E}) because these have $\omega_1 =
\omega_3 = 1$, but $\omega_7 = 0$. Similarly, elements of the
form $0{*}{*}{*}{*}{*}1$ are missing.

To describe $\Gamma_0$, first observe that an element of the
form $0^n1$ can belong to $\Delta$ only if $n$ is even. For an
element $\omega$ of the form $0^{2n}1$, conditions (1)~and~(2)
above are trivially satisfied, and $\{l + 2
  : \omega_{2l-1} = 1\} = \{n + 3\}$, so
condition~(\ref{it:divisibility}) holds if and only if $k+2
\nmid n+3$ for all $k$ such that $2k-1 < 2n + 1$; that is, if
and only if $n+3$ is equal to $4$ or is an odd prime number.
Thus
\begin{align*}
\Gamma_0
&= \big\{0^{2(p-3)}1 : \text{$p=4$ or $p$ is an odd prime number}\big\} \\
&= \{1, 001, 00001, 0^81, 0^{16}1, 0^{20}1, 0^{28}1, \ldots\}.
\end{align*}

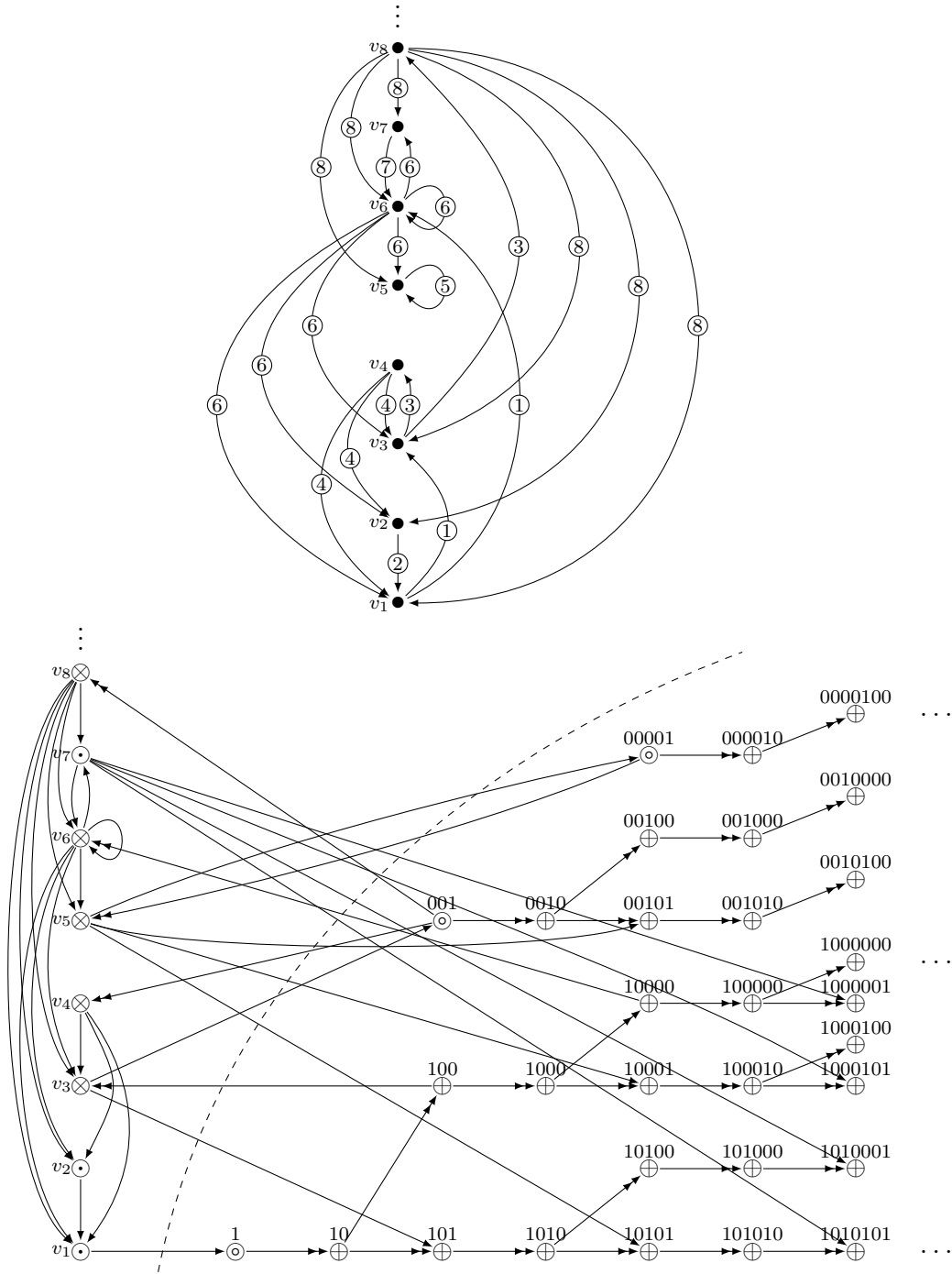
\begin{figure}[htp!]
\[
\begin{tikzpicture}[scale=1.15]
  \node[circle, inner sep=0.1pt] (v1) at (0, 0) {$\bullet$};
  \node[inner sep=-1pt, anchor=east] at (v1.195) {\tiny $v_1$};
  \node[circle, inner sep=0.1pt] (v2) at (0, 1) {$\bullet$};
  \node[inner sep=-1pt, anchor=east] at (v2.west) {\tiny $v_2$};
  \node[circle, inner sep=0.1pt] (v3) at (0,2) {$\bullet$};
  \node[inner sep=-1pt, anchor=east] at (v3.west) {\tiny $v_3$};
  \node[circle, inner sep=0.1pt] (v4) at (0,3) {$\bullet$};
  \node[inner sep=-1pt, anchor=east] at (v4.west) {\tiny $v_4$};
  \node[circle, inner sep=0.1pt] (v5) at (0,4) {$\bullet$};
  \node[inner sep=-1pt, anchor=east] at (v5.195) {\tiny $v_5$};
  \node[circle, inner sep=0.1pt] (v6) at (0,5) {$\bullet$};
  \node[inner sep=-1pt, anchor=east] at (v6.175) {\tiny $v_6$};
  \node[circle, inner sep=0.1pt] (v7) at (0, 6) {$\bullet$};
  \node[inner sep=-1pt, anchor=east] at (v7.west) {\tiny $v_7$};
  \node[circle, inner sep=0.1pt] (v8) at (0,7) {$\bullet$};
  \node[inner sep=-1pt, anchor=east] at (v8.west) {\tiny $v_8$};
  \node at (0,7.5) {\vdots};
  \begin{scope}[>=latex]
  \draw[->] (v1) .. controls +(0.8,0.75) and +(0.8,-0.75) .. (v3) node[circle, inner sep=0.5pt, pos=0.45, fill=white, draw] {\tiny 1};
  \draw[->] (v1) .. controls +(2,1) and +(2,-1) .. (v6) node[circle, inner sep=0.5pt, pos=0.5, fill=white, draw] {\tiny 1};
  \draw[->] (v2)--(v1) node[circle, inner sep=0.5pt, pos=0.5, fill=white, draw] {\tiny 2};
  \draw[->] (v3) .. controls +(0.175,0.25) and +(0.175, -0.25) .. (v4) node[circle, inner sep=0.5pt, pos=0.5, fill=white, draw] {\tiny 3};
  \draw[->] (v3) .. controls +(2,2) and +(2,-2) .. (v8) node[circle, inner sep=0.5pt, pos=0.5, fill=white, draw] {\tiny 3};
  \draw[->] (v4) .. controls +(-1.25,-1) and +(-1.25,1) .. (v1) node[circle, inner sep=0.5pt, pos=0.5, fill=white, draw] {\tiny 4};
  \draw[->] (v4) .. controls +(-0.8,-0.75) and +(-0.8, 0.75) .. (v2) node[circle, inner sep=0.5pt, pos=0.6, fill=white, draw] {\tiny 4};
  \draw[->] (v4) .. controls +(-0.175,-0.25) and +(-0.175,0.25) .. (v3) node[circle, inner sep=0.5pt, pos=0.5, fill=white, draw] {\tiny 4};
  \draw[->] (v5) .. controls +(0.4,0.4) and +(0,0.25) .. ++(0.6,0) .. controls +(0,-0.25) and +(0.4,-0.4) .. (v5) node[circle, inner sep=0.5pt, pos=0, fill=white, draw] {\tiny 5};
  \draw[->] (v6) .. controls +(-3,-1.5) and (-3,1.5) .. (v1) node[circle, inner sep=0.5pt, pos=0.5, fill=white, draw] {\tiny 6};
  \draw[->] (v6) .. controls +(-2.25,-1.5) and +(-2.25,1.5) .. (v2) node[circle, inner sep=0.5pt, pos=0.5, fill=white, draw] {\tiny 6};
  \draw[->] (v6) .. controls +(-1.4,-1) and +(-1.4,1) .. (v3) node[circle, inner sep=0.5pt, pos=0.5, fill=white, draw] {\tiny 6};
  \draw[->] (v6)--(v5) node[circle, inner sep=0.5pt, pos=0.5, fill=white, draw] {\tiny 6};
  \draw[->] (v6) .. controls +(0.4,0.4) and +(0,0.25) .. ++(0.6,0) .. controls +(0,-0.25) and +(0.4,-0.4) .. (v6) node[circle, inner sep=0.5pt, pos=0, fill=white, draw] {\tiny 6};
  \draw[->] (v6) .. controls +(0.175,0.25) and +(0.175, -0.25) .. (v7) node[circle, inner sep=0.5pt, pos=0.5, fill=white, draw] {\tiny 6};
  \draw[->] (v7) .. controls +(-0.175,-0.25) and +(-0.175,0.25) .. (v6) node[circle, inner sep=0.5pt, pos=0.5, fill=white, draw] {\tiny 7};
  \draw[->] (v8)--(v7) node[circle, inner sep=0.5pt, pos=0.5, fill=white, draw] {\tiny 8};
  \draw[->] (v8) .. controls +(-0.75,-0.5) and +(-0.75,0.5) .. (v6) node[circle, inner sep=0.5pt, pos=0.5, fill=white, draw] {\tiny 8};
  \draw[->] (v8) .. controls +(-1.25,-0.5) and +(-1.25,0.5) .. (v5) node[circle, inner sep=0.5pt, pos=0.5, fill=white, draw] {\tiny 8};
  \draw[->] (v8) .. controls +(3,-1) and +(3,1) .. (v3) node[circle, inner sep=0.5pt, pos=0.5, fill=white, draw] {\tiny 8};
  \draw[->] (v8) .. controls +(4,-0.5) and +(4,0.5) .. (v2) node[circle, inner sep=0.5pt, pos=0.5, fill=white, draw] {\tiny 8};
  \draw[->] (v8) .. controls +(5,0) and +(5,0) .. (v1) node[circle, inner sep=0.5pt, pos=0.5, fill=white, draw] {\tiny 8};
  \end{scope}
\end{tikzpicture}
\]
\vspace*{-.33in}
\[
\begin{tikzpicture}[xscale=1.5, yscale=1.2]
  \draw[style=dashed] (-0.75,-0.25) arc (171:115:10cm);
  \node[circle, inner sep=-1pt] (1) at (0,0) {\small$\circledcirc$};
  \node[circle, inner sep=6pt] (1nw) at (1.north west) {};
  \node[circle, inner sep=6pt] (1sw) at (1.south west) {};
  \node[circle, inner sep=6pt] (1se) at (1.south east) {};
  \node[anchor=south, inner sep=0pt] (Gamma1) at (1.north) {\tiny 1};
  \begin{scope}[>=latex]
  \node[circle, inner sep=-1pt] (10) at (1,0) {\small$\oplus$};
  \node[anchor=south, inner sep=0pt] at (10.north) {\tiny 10};
  \draw[->>] (1)--(10);
  \node[circle, inner sep=-1pt] (001) at (2,4) {\small$\circledcirc$};
  \node[circle, inner sep=2.5pt] (001se) at (001.south east) {};
  \node[anchor=south, inner sep=0pt] (Gamma2) at (001.north) {\tiny 001};
  \node[circle, inner sep=-1pt] (100) at (2,2) {\small$\oplus$};
  \node[anchor=south, inner sep=0pt] at (100.north) {\tiny 100};
  \draw[->>] (10)--(100);
  \node[circle, inner sep=-1pt] (101) at (2,0) {\small$\oplus$};
  \node[anchor=south, inner sep=0pt] at (101.north) {\tiny 101};
  \draw[->>] (10)--(101);
  \node[circle, inner sep=-1pt] (0010) at (3,4) {\small$\oplus$};
  \node[anchor=south, inner sep=0pt] at (0010.north) {\tiny 0010};
  \draw[->>] (001)--(0010);
  \node[circle, inner sep=-1pt] (1000) at (3,2) {\small$\oplus$};
  \node[anchor=south, inner sep=0pt] at (1000.north) {\tiny 1000};
  \draw[->>] (100)--(1000);
  \node[circle, inner sep=-1pt] (1010) at (3,0) {\small$\oplus$};
  \node[anchor=south, inner sep=0pt] at (1010.north) {\tiny 1010};
  \draw[->>] (101)--(1010);
  \node[circle, inner sep=-1pt] (00001) at (4,6) {\small$\circledcirc$};
  \node[circle, inner sep=2.5pt] (00001se) at (00001.south east) {};
  \node[anchor=south, inner sep=0pt] (Gamma3) at (00001.north) {\tiny 00001};
  \node[circle, inner sep=-1pt] (00100) at (4,5) {\small$\oplus$};
  \node[anchor=south, inner sep=0pt] at (00100.north) {\tiny 00100};
  \draw[->>] (0010)--(00100);
  \node[circle, inner sep=-1pt] (00101) at (4,4) {\small$\oplus$};
  \node[anchor=south, inner sep=0pt] at (00101.north) {\tiny 00101};
  \draw[->>] (0010)--(00101);
  \node[circle, inner sep=-1pt] (10000) at (4,3) {\small$\oplus$};
  \node[anchor=south, inner sep=0pt] at (10000.north) {\tiny 10000};
  \draw[->>] (1000)--(10000);
  \node[circle, inner sep=-1pt] (10001) at (4,2) {\small$\oplus$};
  \node[anchor=south, inner sep=0pt] at (10001.north) {\tiny 10001};
  \draw[->>] (1000)--(10001);
  \node[circle, inner sep=-1pt] (10100) at (4,1) {\small$\oplus$};
  \node[anchor=south, inner sep=0pt] at (10100.north) {\tiny 10100};
  \draw[->>] (1010)--(10100);
  \node[circle, inner sep=-1pt] (10101) at (4,0) {\small$\oplus$};
  \node[anchor=south, inner sep=0pt] at (10101.north) {\tiny 10101};
  \draw[->>] (1010)--(10101);
  \node[circle, inner sep=-1pt] (000010) at (5,6) {\small$\oplus$};
  \node[anchor=south, inner sep=0pt] (000010lab) at (000010.north) {\tiny 000010};
  \draw[->>] (00001)--(000010);
  \node[circle, inner sep=-1pt] (001000) at (5,5) {\small$\oplus$};
  \node[anchor=south, inner sep=0pt] at (001000.north) {\tiny 001000};
  \draw[->>] (00100)--(001000);
  \node[circle, inner sep=-1pt] (001010) at (5,4) {\small$\oplus$};
  \node[anchor=south, inner sep=0pt] at (001010.north) {\tiny 001010};
  \draw[->>] (00101)--(001010);
  \node[circle, inner sep=-1pt] (100000) at (5,3) {\small$\oplus$};
  \node[anchor=south, inner sep=0pt] at (100000.north) {\tiny 100000};
  \draw[->>] (10000)--(100000);
  \node[circle, inner sep=-1pt] (100010) at (5,2) {\small$\oplus$};
  \node[anchor=south, inner sep=0pt] at (100010.north) {\tiny 100010};
  \draw[->>] (10001)--(100010);
  \node[circle, inner sep=-1pt] (101000) at (5,1) {\small$\oplus$};
  \node[anchor=south, inner sep=0pt] at (101000.north) {\tiny 101000};
  \draw[->>] (10100)--(101000);
  \node[circle, inner sep=-1pt] (101010) at (5,0) {\small$\oplus$};
  \node[anchor=south, inner sep=0pt] at (101010.north) {\tiny 101010};
  \draw[->>] (10101)--(101010);
  \node[circle, inner sep=-1pt] (0000100) at (6,6.5) {\small$\oplus$};
  \node[anchor=south, inner sep=0pt] (0000100lab) at (0000100.north) {\tiny 0000100};
  \node[circle, inner sep=1pt] (0000100nwanchor) at (0000100lab.north west) {};
  \node[circle, inner sep=1pt] (0000100neanchor) at (0000100lab.north east) {};
  \draw[->>] (000010)--(0000100);
  \node[circle, inner sep=-1pt] (0010000) at (6,5.5) {\small$\oplus$};
  \node[anchor=south, inner sep=0pt] at (0010000.north) {\tiny 0010000};
  \draw[->>] (001000)--(0010000);
  \node[circle, inner sep=-1pt] (0010100) at (6,4.5) {\small$\oplus$};
  \node[anchor=south, inner sep=0pt] at (0010100.north) {\tiny 0010100};
  \draw[->>] (001010)--(0010100);
  \node[circle, inner sep=-1pt] (1000000) at (6,3.5) {\small$\oplus$};
  \node[anchor=south, inner sep=0pt] at (1000000.north) {\tiny 1000000};
  \draw[->>] (100000)--(1000000);
  \node[circle, inner sep=-1pt] (1000001) at (6,3) {\small$\oplus$};
  \node[anchor=south, inner sep=0pt] at (1000001.north) {\tiny 1000001};
  \draw[->>] (100000)--(1000001);
  \node[circle, inner sep=-1pt] (1000100) at (6,2.5) {\small$\oplus$};
  \node[anchor=south, inner sep=0pt] at (1000100.north) {\tiny 1000100};
  \draw[->>] (100010)--(1000100);
  \node[circle, inner sep=-1pt] (1000101) at (6,2) {\small$\oplus$};
  \node[anchor=south, inner sep=0pt] at (1000101.north) {\tiny 1000101};
  \draw[->>] (100010)--(1000101);
  \node[circle, inner sep=-1pt] (1010001) at (6,1) {\small$\oplus$};
  \node[anchor=south, inner sep=0pt] at (1010001.north) {\tiny 1010001};
  \draw[->>] (101000)--(1010001);
  \node[circle, inner sep=-1pt] (1010101) at (6,0) {\small$\oplus$};
  \node[anchor=south, inner sep=0pt] at (1010101.north) {\tiny 1010101};
  \draw[->>] (101010)--(1010101);
  \node at (6.8,6.5) {\dots};
  \node at (6.8,3.5) {\dots};
  \node at (6.8,0) {\dots};
  \node[circle, inner sep=-1pt] (v1) at (-1.5, 0) {\small$\odot$};
  \node[inner sep=-1pt, anchor=east] at (v1.west) {\tiny $v_1$};
  \node[circle, inner sep=-1pt] (v2) at (-1.5, 1) {\small$\odot$};
  \node[inner sep=-1pt, anchor=east] at (v2.west) {\tiny $v_2$};
  \node[circle, inner sep=-1pt] (v3) at (-1.5,2) {\small$\otimes$};
  \node[inner sep=-1pt, anchor=east] at (v3.west) {\tiny $v_3$};
  \node[circle, inner sep=-1pt] (v4) at (-1.5,3) {\small$\otimes$};
  \node[inner sep=-1pt, anchor=east] at (v4.west) {\tiny $v_4$};
  \node[circle, inner sep=-1pt] (v5) at (-1.5,4) {\small$\otimes$};
  \node[inner sep=-1pt, anchor=east] at (v5.west) {\tiny $v_5$};
  \node[circle, inner sep=-1pt] (v6) at (-1.5,5) {\small$\otimes$};
  \node[inner sep=-1pt, anchor=east] at (v6.west) {\tiny $v_6$};
  \node[circle, inner sep=-1pt] (v7) at (-1.5, 6) {\small$\odot$};
  \node[inner sep=-1pt, anchor=east] at (v7.west) {\tiny $v_7$};
  \node[circle, inner sep=-1pt] (v8) at (-1.5,7) {\small$\otimes$};
  \node[inner sep=-1pt, anchor=east] at (v8.west) {\tiny $v_8$};
  \node at (-1.5,7.5) {$\vdots$};
  \draw[->>] (100)--(v3);
  \draw[->>] (001)--(v4);
  \draw[->>] (00001) .. controls(2.6,5.2) and (0.1,4.4) .. (v5);
  \node[inner sep=2.5pt,circle,anchor=north] (v9bar-control) at (001.south) {};
  \draw[->>] (10000)--(v6);
  \draw[->>] (001)--(v8);
  \end{scope}
  \begin{scope}[>=latex]
  \draw[->] (v1)--(1);
  \draw[->] (v2)--(v1);
  \draw[->] (v3)--(001);
  \draw[->] (v3)--(101);
  \draw[->] (v4) .. controls (-0.9,2) and (-0.9,1) .. (v1);
  \draw[->] (v4) .. controls (-1.1,2) .. (v2);
  \draw[->] (v4)--(v3);
  \draw[->] (v5) .. controls(-0.1,4.8) and (2.4,5.6) .. (00001);
  \draw[->] (v5) .. controls (-0.5,3.6) and (3,3.6) .. (00101);
  \draw[->] (v5)--(10001);
  \draw[->] (v5)--(10101);
  \draw[->] (v6) .. controls (-2.3,4) and (-2.2,1) .. (v1);
  \draw[->] (v6) .. controls (-2.1,4) and (-2.1,2) .. (v2);
  \draw[->] (v6) .. controls (-1.9,4) and (-1.9,3) .. (v3);
  \draw[->] (v6)--(v5);
  \draw[->] (v6) .. controls (-1,5.6) and (-1,4.4)  .. (v6);
  \draw[->] (v6) .. controls (-1.4,5.4) and (-1.4,5.6) .. (v7);
  \draw[->] (v7) .. controls (-1.6,5.6) and (-1.6,5.4) .. (v6);
  \draw[->] (v7)--(1000001);
  \draw[->] (v7) .. controls (4.5,3) .. (1000101);
  \draw[->] (v7)--(1010001);
  \draw[->] (v7)--(1010101);
  \draw[->] (v8)--(v7);
  \draw[->] (v8) .. controls (-1.7,6.4) and (-1.8,5.5) .. (v6);
  \draw[->] (v8) .. controls (-1.9,6) and (-1.9,5) .. (v5);
  \draw[->] (v8) .. controls (-2.1,6) and (-2.1,3) .. (v3);
  \draw[->] (v8) .. controls (-2.25,6) and (-2.25,2) .. (v2);
  \draw[->] (v8) .. controls (-2.4,6) and (-2.4,1) .. (v1);
  \end{scope}
\end{tikzpicture}
\]
\caption{The ultragraph $\G$ (top) and graph $E$ (bottom) of
Example~\ref{ex:illustration}}\label{pic:G,E}
\end{figure}

We now describe $W_+$. By Lemma~\ref{lem:W_+=disjoint union},
$W_+$ is the disjoint union of the sets $r(0^{2(p-3)}1)$ where
$p$ runs through $4$ and all odd prime numbers. We have
\begin{align*}
r(1)&=\{v_m : 3\mid m\}, \\
r(001)&= \{v_m : 3 \nmid m\text{ and } 4 \mid m\}, \\
r(00001)&= \{v_m : 3 \nmid m,\ 4 \nmid m, \text{ and } 5 \mid m\},
\intertext{and for an odd prime number $p$ greater than $5$,}
r(0^{2(p-3)}1) &= \big\{v_m :
3 \nmid m,\ 4 \nmid m, \ldots, (p-1) \nmid m,\ p \mid m, \text{ and }
m > (p-3)^2\big\}.
\end{align*}
This implies that $v_1, v_2 \not\in W_+$ and that $v_m \in W_+$
whenever $3\mid m$, $4\mid m$, or $5\mid m$. Fix $m\in
\N\setminus (\{1,2\}\cup 3\N \cup 4\N \cup 5\N)$. Let $p$ be
the smallest odd prime divisor of $m$. Then $p$ is
greater than $5$. Moreover $v_m \in W_+$ if and only if $v_m
\in r(0^{2(p-3)}1)$, which is equivalent to $m > (p-3)^2$. Let
$k = m/p \in \N$. Since $p$ is the smallest odd prime divisor of $m$, either $k=1$, $k=2$, or $k\geq p$. If
$k=1$ or $k=2$, we have $m = kp \leq (p-3)^2$ and hence $v_m
\not\in W_+$. If $k\geq p$, then $m = kp \geq p^2 > (p-3)^2$,
so $v_m \in W_+$.
Recall that $W^0 = G^0 \setminus W_+$.
We have proved that $W_0$ may be described as
\begin{align*}
W_0
&= \big\{v_p,v_{2p} :
\text{$p=1$ or $p$ is an odd prime number greater than $5$}\big\}\\
&= \{v_1,v_2, v_7, v_{11}, v_{13}, v_{14},
v_{17}, v_{19}, v_{22}, v_{23}, \ldots \},
\end{align*}
and then $W_+$ is the complement of this set:
\[
W_+ = G^0 \setminus W_0 = \{v_3, v_4, v_5, v_6, v_8, v_9, v_{10}, v_{12}, v_{15}, v_{16}, v_{18}, v_{20}, v_{21}, \ldots\}.
\]

We now define a function $\sigma\colon W_+\to \Delta$ with the
properties described in Lemma~\ref{lem:W_+ -> Delta^0}.  Since each $r(e_{2k}) = \{v_n : n \le k^2, 4\nmid n\}$, the set $W_\infty \subset W_+$ described in the proof of
Lemma~\ref{lem:W_+ -> Delta^0} is $\{v_m \in W_+: 4 \mid m\}$.
Thus $\{v_{4}, v_8, v_{12}, v_{16}, \dots\}$ is an ordering of
$W_\infty$. For $k \in \N$, let $n := \max\{3,k\}$. Then $n$ is
the smallest integer such that $n \ge k$ and $v_{4k} \in
\bigsqcup_{\omega \in \Delta_n} r(\omega)$. Define
$\sigma(v_{4k})$ to be the unique element $\omega$ of
$\Delta_n$ such that $v_{4k} \in r(\omega)$. So
\[
\sigma(v_4) = \sigma(v_8) = 001, \quad
\sigma(v_{12}) = 101, \quad
\sigma(v_{16}) = 0010, \quad
\sigma(v_{20}) = 00101, \quad \dots
\]
For $v_m \in W_+ \setminus W_{\infty}$, let $k$ be the minimal
integer such that $m \le k^2$. Then $n := 2k - 1$ is the
maximal integer such that $v_m \in \bigsqcup_{\omega \in
\Delta_n} r(\omega)$. We define $\sigma(v_m)$ to be the unique
element $\omega$ of $\Delta_n$ such that $v_m \in r(\omega)$.
So
\begin{align*}
\sigma(v_3) &= 100, \quad \sigma(v_5) = 00001, \quad
\sigma(v_6) = 10000,\quad \sigma(v_9) = 10000,\\
\sigma(v_{10}) &= 0000100,\quad
\sigma(v_{15}) = 1000100,\quad \sigma(v_{18}) = 100000100,\quad \dots
\end{align*}
By our convention that $\sigma(v) = \emptyset$
whenever $v \in W_0$,
we have
\[
\emptyset = \sigma(v_1) = \sigma(v_2)
= \sigma(v_7) = \sigma(v_{11}) = \sigma(v_{13})
= \sigma(v_{14}) = \sigma(v_{17}) = \sigma(v_{19}) = \cdots.
\]
We also have
\begin{gather*}
\Xset{1} = \{1\},\qquad \Xset{2} = \{v_1\},\qquad
\Xset{3} = \{001,101\},\qquad \Xset{4} = \{v_1, v_2, v_3\}, \\
\Xset{5} = \{00001,00101,10001,10101\},\qquad
\Xset{6} = \{v_1, v_2, v_3, v_5, v_6, v_7, v_9\}, \\
\Xset{7} = \{v_6, v_{12}, v_{24}, 1000001, 1000101, 1010001, 1010101\}, \\
\Xset{8} = \{v_1, v_2, v_3, v_5, v_6, v_7, v_9,
      v_{10}, v_{11}, v_{13}, v_{14}, v_{15}\},\qquad \dots
\end{gather*}
This is all the information required to draw $E$, and we have
done so in Figure~\ref{pic:G,E}. To distinguish the various
special sets of vertices discussed above, we draw vertices
using four different symbols as follows: vertices of the form
$\odot$ belong to $W_0$; those of the form $\otimes$ belong to
$W_+$; those of the form $\circledcirc$ belong to $\Gamma_0$;
and those of the form $\oplus$ belong to $\Gamma_+$. The dashed
arc separates $G^0$ on the left from $\Delta$ on the right.

Edges drawn as double-headed arrows are of the form
$\overline{x}$ where $x \in W_+\sqcup\Gamma_+$, and edges drawn
as single-headed arrows are of the form $\edge(n,x)$ where $e_n
\in \G^1$ and $x \in \Xset{n}$. Since $s_E(\overline{x}) \in
\Delta$ and $r_E(\overline{x}) = x$ for all $x \in
W_+\sqcup\Gamma_+$, and since $s_E(\edge(n,x)) = s(e_n) = v_n$
and $r_E(\edge(n,x)) = x$ for all $n$ and $x \in \Xset{n}$,
once we know the type of an edge, the edge is uniquely
determined by its source and its range. Thus it is not
necessary to label the edges in the figure.
\end{example}

\section{Paths and Condition~(K)} \label{Path-sec}
\setcounter{footnote}{1}

For this section, we fix an ultragraph $\G$, and make a choice
of an ordering $\{e_1, e_2, \dots\}$ of $\G^1$ and a function
$\sigma$ as in Lemma~\ref{lem:W_+ -> Delta^0}. Let
$E=(E^0,E^1,r_E,s_E)$ be the graph constructed from $\G$ as in
Definition~\ref{dfn:E}. We relate the path structure of $E$ to
that of $\G$. In particular, we show that $\G$ satisfies
Condition~(K) as in \cite{KMST} if and only if $E$ satisfies
Condition~(K) as in \cite{KPRR}. Condition~(K) was introduced
in \cite{KPRR} to characterise those graphs in whose
$C^*$-algebras every ideal is gauge-invariant. In
Section~\ref{ideal-sec}, we will combine our results in this
section with our main result Theorem~\ref{thm:fullcorner} to
deduce from Kumjian, Pask, Raeburn and Renault's result the
corresponding theorem for ultragraph $C^*$-algebras.

We recall some terminology for graphs (see, for example,
\cite{KPR, BPRS2000}; note that our edge direction convention
agrees with that used in these papers and in \cite{KMST}, which
is opposite to that used in \cite{Raeburn2005}).

For each integer $n \geq 2$, we write
\[
E^n := \{\alpha = \alpha_1 \alpha_2 \cdots \alpha_n :
\text{$\alpha_i \in E^1$ and $s_E(\alpha_{i+1})=r_E(\alpha_{i})$ for all $i$}\},
\]
and $E^* := \bigsqcup_{n=0}^\infty E^n$. The elements of $E^*$
are called \emph{paths}. The \emph{length} of a path $\alpha$
is the integer $|\alpha|$ such that $\alpha \in E^{|\alpha|}$.
We extend the range and source maps to $E^*$ as follows. For $v
\in E^0$, we write $r_E(v) = s_E(v) = v$. For $\alpha \in
E^*\setminus E^0$ we write $r_E(\alpha) =
r_E(\alpha_{|\alpha|})$ and $s_E(\alpha) = s_E(\alpha_1)$.

For $\alpha,\beta \in E^*$ such that $r_E(\alpha) =
s_E(\beta)$, we may form the path $\alpha\beta \in E^*$ by
concatenation. Thus $E^*$ becomes a category whose unit space
is $E^0\subset E^*$. For a vertex $v \in E^0$, a \emph{return
path based at $v$} is a path $\alpha$ of nonzero length with
$s_E(\alpha) = r_E(\alpha) = v$. A return path $\alpha =
\alpha_1 \alpha_2 \cdots \alpha_n$ based at $v$ is called a
\emph{first-return path} if $s_E(\alpha_i) \neq v$ for $i = 2,
3, \ldots, n$. We say that $E$ satisfies \emph{Condition~(K)}
if no vertex is the base of exactly one first-return path
(equivalently, each vertex is either the base of no
return path, or is the base of at least two first-return
paths).

Consider the subgraph $F$ of $E$ with the same vertices as $E$,
and edges $F^1=\{\overline{x} : x\in W_+\sqcup\Gamma_+\}
\subset E^1$. Note that $F^1\subset E^1$ is the set of all
edges in $E^1$ starting from $\Delta \subset E^0$. In Figure~\ref{pic:G,E} the elements of
$F^1$ are the double-headed arrows. Then
\[
F^*=E^0\sqcup\big\{\overline{x}_1\overline{x}_2\cdots \overline{x}_n\in E^*:
n\in\N, x_i\in W_+\sqcup\Gamma_+\big\}\subset E^*.
\]
In particular $\alpha \in E^*$ belongs to $F^*$ if and only if
it contains no edges of the form $\edge(n,x)$ where
$e_n\in\G^1$ and $x\in \Xset{n}$.

\begin{lemma}\label{lem:unique expression}
Every $\alpha\in E^*$ can be uniquely expressed as
\[
\alpha=g_0\cdot \edge(n_1,x_1)\cdot g_1\cdot \edge(n_2,x_2)\cdot g_{2}
\cdot \cdots \cdot \edge(n_k,x_k)\cdot g_{k}
\]
where each $e_{n_i}\in\G^1$, $x_i\in \Xset{n_i}$ and $g_i\in F^*$.
\end{lemma}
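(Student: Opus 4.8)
The plan is to prove existence and uniqueness separately, both by induction on the length $|\alpha|$. The key structural fact we will exploit is the explicit description of $E^1$ in Definition~\ref{dfn:E}: every edge of $E$ is either of the form $\overline{x}$ with $x \in W_+ \sqcup \Gamma_+$ (these are exactly the edges with source in $\Delta$, and $F^1$ is precisely this set of edges) or of the form $\edge(n,x)$ with $e_n \in \G^1$ and $x \in \Xset{n}$ (these are the edges with source in $G^0$, since $s_E(\edge(n,x)) = s(e_n) \in G^0$). Thus $E^1 = F^1 \sqcup \{\edge(n,x)\}$, and whether a given edge of $\alpha$ is an $F^1$-edge or an $\edge(n,x)$-edge is determined by whether its source lies in $\Delta$ or in $G^0$. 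The description of $F^*$ already recorded in the excerpt — namely that $\alpha \in F^*$ exactly when $\alpha$ contains no edge of the form $\edge(n,x)$ — is what lets us package the $F^1$-runs between consecutive $\edge(n_i,x_i)$-edges into the $g_i \in F^*$.

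For \textbf{existence}, I would argue as follows. Given $\alpha = \alpha_1 \cdots \alpha_m \in E^*$, let $i_1 < i_2 < \cdots < i_k$ enumerate those indices $i$ for which $\alpha_i$ is \emph{not} in $F^1$, i.e., for which $\alpha_i = \edge(n_j, x_j)$ for some $e_{n_j} \in \G^1$ and $x_j \in \Xset{n_j}$; this is a well-defined list because $E^1 = F^1 \sqcup \{\edge(n,x) : e_n \in \G^1,\ x \in \Xset{n}\}$. Setting $g_0 := \alpha_1 \cdots \alpha_{i_1 - 1}$, $g_j := \alpha_{i_j + 1} \cdots \alpha_{i_{j+1}-1}$ for $1 \le j < k$, and $g_k := \alpha_{i_k + 1} \cdots \alpha_m$ (where an empty product of edges is interpreted as the appropriate vertex in $E^0 \subset F^*$), each $g_j$ consists entirely of $F^1$-edges and hence lies in $F^*$ by the characterisation of $F^*$ quoted above; composability of the displayed factorisation is inherited from composability of $\alpha$. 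This gives the required expression.

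For \textbf{uniqueness}, suppose $\alpha$ has two expressions of the stated form. Since the $g_i$ lie in $F^*$ and the $\edge(n_i,x_i)$ do not, reading the two factorisations letter-by-letter as words in $E^1$ (after expanding each $g_i$ into its constituent edges) shows that the positions of the $\edge(n_i,x_i)$-letters in $\alpha$ are exactly the positions where $\alpha$ uses a non-$F^1$ edge; hence $k$ and the edges $\edge(n_i,x_i)$ — and therefore the pairs $(n_i, x_i)$, since an edge of $E$ determines its label — agree in the two expressions. The remaining blocks between these positions are then forced to coincide, so the $g_i$ agree as well. The only mild subtlety is the bookkeeping with empty factors ($g_i$ being a vertex): one should note that composability pins down \emph{which} vertex each empty $g_i$ must be, namely $r_E(\edge(n_i,x_i)) = x_i = s_E(\edge(n_{i+1},x_{i+1})) = s(e_{n_{i+1}})$, so no ambiguity arises. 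I do not expect a serious obstacle here; the one point that needs a little care is making the "read the path as a word in $E^1$ and split at the non-$F^1$ letters" argument rigorous, i.e., verifying that expanding each $g_i \in F^*$ back into edges and concatenating genuinely recovers $\alpha$ and that this expansion is itself unique — but this is immediate from the fact that a path is by definition a tuple of edges and from the description of $F^*$.
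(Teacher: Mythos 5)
Your proposal is correct and follows essentially the same route as the paper: partition $E^1$ into $F^1$ and the edges $\edge(n,x)$, split $\alpha$ at the non-$F^1$ letters, group the intervening runs of $F^1$-edges into the $g_i$ (inserting vertices as length-zero elements of $F^*$ where needed), and observe that the positions of the non-$F^1$ letters pin down the expression uniquely. The paper's proof is just a terser version of this; your extra care with the empty blocks and with uniqueness is fine but not a different argument.
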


\begin{proof}
Let $\alpha = \alpha_1\alpha_2 \dots \alpha_n$. Whenever
$\alpha_i$ and $\alpha_{i+1}$ are both of the form
$\edge(n,x)$, rewrite $\alpha_i \alpha_{i+1} = \alpha_i
r_E(\alpha_i) \alpha_{i+1}$ (recall that $r_E(\alpha_i) \in
E^0$ belongs to $F^*$ by definition). Now by grouping sequences
of consecutive edges from $F^1$, we obtain an expression for
$\alpha$ of the desired form. This expression is clearly
unique.
\end{proof}

In the graph $F$, we can distinguish the sets $W_0$, $W_+$,
$\Gamma_0$, and $\Gamma_+$ as follows.
\begin{itemize}
\item An element in $W_0$ emits no edges,
and receives no edges.
\item An element in $W_+$ emits no edges,
and receives exactly one edge.
\item An element in $\Gamma_0$ emits a finite and nonzero
  number of edges, and receives no edges.
\item An element in $\Gamma_+$ emits a finite and nonzero
  number of edges, and receives exactly one edges.
\end{itemize}
Next we describe the paths of the graph $F$. To do so, the
following notation is useful.

\begin{definition}
For $n \in \N$ and $\omega \in \{0,1\}^n \setminus \{0^n\}$ we set
\[
r'(\omega)
:= \{v \in r(\omega) : |\sigma(v)| \geq n\}
=\{ v \in G^0 : |\sigma(v)| \geq n,\ \sigma(v)|_{n} = \omega\}.
\]
\end{definition}

To see that the two sets in the definition above coincide, it
suffices to see that for $v \in G^0$ with $|\sigma(v)| \ge n$,
we have $v \in r(\omega)$ if and only if $\sigma(v)|_{n} =
\omega$. For this, observe that $v \in r(\sigma(v)) \subset
r(\sigma(v)|_{n})$ and that there is at most one $\omega \in
\{0,1\}^{n} \setminus \{0^{n}\}$ such that $v \in r(\omega)$.

\begin{lemma}\label{lem:r'empty}
For $\omega \notin \Delta$ the set $r'(\omega)$ is empty.
\end{lemma}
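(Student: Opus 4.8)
The plan is to argue by contradiction using the second description of $r'(\omega)$ together with Remark~\ref{rmk:omegai}. Write $n := |\omega|$, and suppose $r'(\omega) \ne \emptyset$, say $v \in r'(\omega)$. By the displayed formula defining $r'(\omega)$ this means $|\sigma(v)| \ge n$ and $\sigma(v)|_n = \omega$. Since $r'(\omega)$ is only defined for $\omega \in \{0,1\}^n \setminus \{0^n\}$, we have $\omega \ne 0^n$, so there is an index $j \le n$ with $\omega_j = 1$; hence $\sigma(v)_j = 1$ as well, and consequently $\sigma(v)|_k \ne 0^k$ for every $k$ with $n \le k \le |\sigma(v)|$.

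Now $\sigma(v) \in \Delta$, so $\sigma(v) = \sigma(v)|_{|\sigma(v)|} \in \Delta_{|\sigma(v)|}$. I would then run a downward induction on $k$ from $|\sigma(v)|$ to $n$: at stage $k$ (with $k \ge n+1$) we have $\sigma(v)|_k \in \Delta$, and since $\sigma(v)|_k = (\sigma(v)|_{k-1}, \sigma(v)_k)$ with $\sigma(v)|_{k-1} \ne 0^{k-1}$, Remark~\ref{rmk:omegai} gives $\sigma(v)|_{k-1} \in \Delta$. Iterating down to $k = n+1$ yields $\omega = \sigma(v)|_n \in \Delta$, contradicting the hypothesis $\omega \notin \Delta$. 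Therefore $r'(\omega) = \emptyset$.

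This lemma is short, and the only point that needs care — the step I'd flag as the main (minor) obstacle — is checking that Remark~\ref{rmk:omegai} is legitimately applicable at every stage of the induction, i.e.\ that each truncation $\sigma(v)|_{k-1}$ occurring in the argument is a \emph{nonzero} word in $\{0,1\}^{k-1}$ so that it lies in the domain of the remark; this is precisely why one first records that some coordinate $\sigma(v)_j$ with $j \le n$ equals $1$. No computation beyond this bookkeeping is required.
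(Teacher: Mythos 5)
Your proof is correct and takes essentially the same route as the paper: the paper's proof also picks $v\in r'(\omega)$, observes that $\sigma(v)\in\Delta$ and $\sigma(v)|_n=\omega\neq 0^n$, and concludes $\omega\in\Delta$, leaving implicit the downward induction via Remark~\ref{rmk:omegai} (recorded just after the definition of $\Gamma_0$) that you spell out. The bookkeeping you flag about each truncation being nonzero is handled correctly.
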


\begin{proof}
If $\omega \in \{0,1\}^n \setminus \{0^n\}$
for some $n \in \N$, and suppose that $r'(\omega)$ is not empty.
Then there exists $v\in G^0$ with $|\sigma(v)|\geq n$
and $\sigma(v)|_{n} = \omega$.
Since $\sigma(v)\in\Delta$ and $\omega\neq 0^n$,
we have $\omega \in \Delta$.
This shows $r'(\omega) =\emptyset$ for $\omega \notin \Delta$.
\end{proof}

\begin{lemma}\label{lem:r(e_n) and X_n}
For each $n \in \N$, we have $r(e_n) = (\Xset{n} \cap G^0)
\sqcup \big( \bigsqcup_{\omega \in \Xset{n} \cap \Delta}r'(\omega) \big)$.
\end{lemma}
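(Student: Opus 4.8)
The plan is to prove the statement by showing that each side of the claimed equation is contained in the other, then verifying disjointness of the pieces on the right. Recall from Definition~\ref{def:X_n} that
\[
\Xset{n} = \{v \in r(e_n) : |\sigma(v)| < n\} \sqcup \{\omega \in \Delta_n : \omega_n = 1\},
\]
so that $\Xset{n} \cap G^0 = \{v \in r(e_n) : |\sigma(v)| < n\}$ and $\Xset{n} \cap \Delta = \{\omega \in \Delta_n : \omega_n = 1\}$. Thus the asserted identity amounts to
\[
r(e_n) = \{v \in r(e_n) : |\sigma(v)| < n\} \;\sqcup\; \bigsqcup_{\substack{\omega \in \Delta_n \\ \omega_n = 1}} r'(\omega).
\]

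First I would take $v \in r(e_n)$ and split into cases according to $|\sigma(v)|$. If $|\sigma(v)| < n$, then $v$ lies in the first set on the right-hand side and we are done. If $|\sigma(v)| \ge n$, set $\omega := \sigma(v)|_n \in \{0,1\}^n$. Since $v \in r(\sigma(v)) \subset r(\sigma(v)|_n) = r(\omega)$ and $v \in r(e_n)$, the $n$\textsuperscript{th} coordinate of $\omega$ must be $1$ (otherwise $r(\omega)$ would be disjoint from $r(e_n)$), so $\omega_n = 1$; in particular $\omega \neq 0^n$, and since $\sigma(v) \in \Delta$ we get $\omega = \sigma(v)|_n \in \Delta$, hence $\omega \in \Delta_n$. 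Then by definition $v \in r'(\omega)$ because $|\sigma(v)| \ge n$ and $\sigma(v)|_n = \omega$. This shows the left-hand side is contained in the right-hand side. For the reverse inclusion, the first set is contained in $r(e_n)$ by definition, and for $\omega \in \Delta_n$ with $\omega_n = 1$ we have $r'(\omega) \subset r(\omega) \subset r(e_n)$ since $\omega_n = 1$ forces $r(e_n)$ to appear among the intersecting terms defining $r(\omega)$.

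It remains to verify that the union on the right is disjoint. The first set $\Xset{n} \cap G^0$ consists of vertices with $|\sigma(v)| < n$, while every $v$ in any $r'(\omega)$ has $|\sigma(v)| \ge n$, so these are disjoint. For distinct $\omega, \omega' \in \Delta_n$ the sets $r'(\omega)$ and $r'(\omega')$ are disjoint: if $v$ lay in both, then using the alternative description $r'(\omega) = \{v : |\sigma(v)| \ge n,\ \sigma(v)|_n = \omega\}$ we would get $\omega = \sigma(v)|_n = \omega'$, a contradiction. I do not anticipate any serious obstacle here; the only point requiring care is keeping straight the two equivalent descriptions of $r'(\omega)$ and invoking the uniqueness (from Lemma~\ref{lem:W_+=disjoint union} and the remarks following the definition of $r'$) of the $\omega \in \{0,1\}^n \setminus \{0^n\}$ with $v \in r(\omega)$, which is exactly what makes both the case analysis and the disjointness go through cleanly.
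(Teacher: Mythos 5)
Your proof is correct and rests on the same facts as the paper's: the partition of $r(e_n)$ into the sets $r(\omega)$ over $\omega\in\{0,1\}^n$ with $\omega_n=1$, the identity $r'(\omega)=r(\omega)\setminus\{v:|\sigma(v)|<n\}$, and the observation that only $\omega\in\Delta$ can contribute (your direct argument that $\sigma(v)|_n\in\Delta$ is exactly the content of Lemma~\ref{lem:r'empty}). The paper just phrases this as a one-line set-algebra computation rather than an element-by-element double inclusion, so the two arguments are essentially the same.
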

\begin{proof}
We have $r(e_n) =  \bigsqcup_{\omega \in
\{0,1\}^n,\,\omega_n=1} r(\omega)$. The definition of
$\Xset{n}$ (see Definition~\ref{def:X_n}) guarantees that
$\Xset{n} \cap G^0 = \{v \in r(e_n) : |\sigma(v)| < n\}$. For
$\omega \in \{0,1\}^n$ with $\omega_n=1$, we have
$r'(\omega)=r(\omega)\setminus\{v \in r(\omega) : |\sigma(v)| <
n\}$ by definition. Hence
\[\textstyle
r(e_n) = (\Xset{n} \cap G^0) \sqcup \big(\bigsqcup_{\omega \in \{0,1\}^n,\,\omega_n=1}
r'(\omega)\big).
\]
Finally $r'(\omega)=\emptyset$ for $\omega \notin \Delta$ by
Lemma~\ref{lem:r'empty}.
\end{proof}

\begin{remark}\label{rem:r'=r'}
For $\omega \in \{0,1\}^n \setminus \{0^n\}$
one can show
$r'(\omega) = r'(\omega, 0) \sqcup r'(\omega, 1) \sqcup \sigma^{-1}(\omega)$,
using the fact $\sigma^{-1}(\omega)=\{v \in r(\omega) : |\sigma(v)|=n\}$.
We omit the routine proof because we do not use it,
but we remark this fact because this relates to Lemma~\ref{lem:Q'}
(this can be proved using Lemma~\ref{lem:paths in F}~(3) below).
\end{remark}

\begin{lemma}\label{lem:paths in F}
The graph $F$ contains no return paths,
and each $\alpha \in F^*$ is uniquely determined by $s_E(\alpha)$
and $r_E(\alpha)$. Moreover,
\begin{enumerate}
\item every path in $\alpha \in F$ of nonzero length satisfies $s_E(\alpha) \in \Delta$;
\item there is a path in $F$ from $\omega \in \Delta$ to
  $\omega' \in \Delta$ if and only if $|\omega| \le
  |\omega'|$ and $\omega = \omega'|_{|\omega|}$.
\item there is a path in $F$ from $\omega \in \Delta$ to $v
  \in G^0$ if and only if $v \in r'(\omega)$; and
\end{enumerate}
\end{lemma}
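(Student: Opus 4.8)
The plan is to establish the three numbered claims about paths in $F$, along with the two preliminary assertions (no return paths, and paths are determined by their endpoints), by exploiting the explicit description of the edges of $F$ given in Definition~\ref{dfn:E}. Recall that $F^1 = \{\overline{x} : x \in W_+ \sqcup \Gamma_+\}$, that $r_E(\overline{x}) = x$, and that $s_E(\overline{v}) = \sigma(v)$ for $v \in W_+$ while $s_E(\overline{\omega}) = \omega|_{|\omega|-1}$ for $\omega \in \Gamma_+$. The crucial structural observation, already recorded in the bulleted list preceding the statement, is that in $F$ every vertex receives \emph{at most one} edge: vertices in $W_0 \sqcup \Gamma_0$ receive none, and vertices in $W_+ \sqcup \Gamma_+$ receive exactly one. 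Dually, the sources of edges in $F^1$ all lie in $\Delta$, which immediately gives claim~(1): any edge $\alpha_i$ of a path has source in $\Delta$, so in particular $s_E(\alpha) = s_E(\alpha_1) \in \Delta$.

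First I would prove that $F$ has no return paths and that paths are determined by their endpoints. Because each vertex receives at most one edge, if $\alpha = \alpha_1 \cdots \alpha_n$ and $\beta = \beta_1 \cdots \beta_m$ are paths in $F$ with $r_E(\alpha) = r_E(\beta)$, then $\alpha_n = \beta_m$ (both are the unique edge into that vertex), hence $s_E(\alpha_n) = s_E(\beta_m)$, and one proceeds backwards by induction to conclude $\alpha = \beta$ (and $n = m$); here one also needs that if $s_E(\alpha_n) \in W_0 \sqcup \Gamma_0$ then it receives no edge, so the path terminates on the left. For the absence of return paths: along a path $\overline{x}_1 \overline{x}_2 \cdots \overline{x}_n$, tracking the quantity $|s_E(\overline{x}_i)|$ versus $|r_E(\overline{x}_i)| = |x_i|$, one checks that each edge strictly increases the length of the $\{0,1\}$-string label (for $\overline{\omega}$ with $\omega \in \Gamma_+$ the source is $\omega|_{|\omega|-1}$, strictly shorter; for $\overline{v}$ with $v \in W_+$ the range $v \in G^0$ has no outgoing $F$-edges at all, so it can only occur last). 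Thus $F$ has no cycles, and in particular no return path.

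Next, claim~(2): a path in $F$ from $\omega \in \Delta$ to $\omega' \in \Delta$. For the forward direction, if such a path exists, by claim~(1) its edges are all of the form $\overline{\omega''}$ with $\omega'' \in \Gamma_+$ except possibly... no: since the range $\omega'$ lies in $\Delta \subset E^0$, every edge is of the form $\overline{\mu}$ with $\mu \in \Gamma_+ \cup W_+$, but the range being in $\Delta$ forces all edges to be of the form $\overline{\mu}$ with $\mu \in \Gamma_+$ (an edge $\overline{v}$ with $v \in W_+$ has range in $G^0$, so could only be the last edge, contradiction). Writing the path as $\overline{\mu_1}\overline{\mu_2}\cdots\overline{\mu_k}$ with $\mu_i \in \Gamma_+$, we have $r_E(\overline{\mu_i}) = \mu_i$ and $s_E(\overline{\mu_{i+1}}) = \mu_{i+1}|_{|\mu_{i+1}|-1}$, so $\mu_i = \mu_{i+1}|_{|\mu_{i+1}| - 1}$, i.e.\ each $\mu_i$ is an initial segment of $\mu_{i+1}$ with $|\mu_i| = |\mu_{i+1}| - 1$; with $\mu_1|_{|\mu_1|-1} = \omega$ and $\mu_k = \omega'$ this telescopes to $\omega = \omega'|_{|\omega|}$ and $|\omega| \le |\omega'|$. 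Conversely, given $|\omega| \le |\omega'|$ with $\omega = \omega'|_{|\omega|}$, the candidate path is $\overline{\omega'|_{|\omega|+1}} \, \overline{\omega'|_{|\omega|+2}} \cdots \overline{\omega'|_{|\omega'|}}$; one must check that each $\omega'|_j$ for $|\omega| < j \le |\omega'|$ actually lies in $\Gamma_+$. Since $\omega' \in \Delta$ and $\omega'|_j$ has a nonzero coordinate (because $\omega = \omega'|_{|\omega|} \in \Delta$ is not $0^{|\omega|}$, and if $|\omega| = 1$ we need the tiny extra argument that $\omega \ne 0^1$ means $\omega = (1)$), Remark~\ref{rmk:omegai} iterated gives $\omega'|_j \in \Delta$; and $\omega'|_j \in \Gamma_+$ precisely because $|\omega'|_j| = j > 1$ and $(\omega'|_j)|_{j-1} = \omega'|_{j-1} \in \Delta$ (again by the iterated Remark~\ref{rmk:omegai} and the characterization $\Gamma_+ = \{\omega \in \Delta : |\omega| > 1,\ \omega|_{|\omega|-1} \in \Delta\}$ established just after Definition of $\Gamma_0$).

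Finally, claim~(3): a path from $\omega \in \Delta$ to $v \in G^0$ exists iff $v \in r'(\omega)$. Such a path, if nontrivial, must end in an edge with range $v \in G^0$, which must be $\overline{v}$ with $v \in W_+$ (the only $F$-edges into a $G^0$-vertex), and $s_E(\overline{v}) = \sigma(v)$, so the remainder is a path in $F$ from $\omega$ to $\sigma(v) \in \Delta$; by claim~(2) this exists iff $|\omega| \le |\sigma(v)|$ and $\sigma(v)|_{|\omega|} = \omega$, which is exactly the condition $v \in r'(\omega)$ using the second description of $r'(\omega)$ in the definition. (One also needs $v \in W_+$, i.e.\ $|\sigma(v)| \ge 1$, which follows from $|\sigma(v)| \ge |\omega| \ge 1$.) I expect the main obstacle to be bookkeeping the edge-case $|\omega| = 1$ (where $\omega|_{|\omega|-1} = 0^0$ is the empty string and one must be careful about what $s_E$ does and about membership in $\Gamma_+$ versus $\Gamma_0$), together with carefully justifying that along a path of positive length all but possibly the last edge are of $\Gamma_+$-type — both are straightforward once one unwinds the definitions, but they are where an otherwise clean argument could slip.
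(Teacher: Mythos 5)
Your proof is correct and follows essentially the same route as the paper's: it exploits the facts that every $F$-edge has source in $\Delta$ and that each vertex receives at most one $F$-edge to get uniqueness and claim (1), observes that each edge strictly increases the length of the string label (equivalently, goes from $\Delta_n$ to $\Delta_{n+1}$ or ends at a sink) to rule out return paths, writes down the explicit path $\overline{\omega'|_{|\omega|+1}}\cdots\overline{\omega'}$ for claim (2), and reduces claim (3) to claim (2) via the last edge $\overline{v}$ with source $\sigma(v)$. The extra care you take in verifying that each $\omega'|_j$ lies in $\Gamma_+$ (via Remark~\ref{rmk:omegai} and the characterization of $\Gamma_+$) is a point the paper passes over more quickly, but both arguments are the same in substance.
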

\begin{proof}
Fix $e \in F^1$.
Then either $r_E(e) \in G^0$ and hence is a sink in $F$,
or else $s_E(e) \in \Delta_n$ and $r_E(e) \in
\Delta_{n+1}$ for some $n \in \N$.
Thus $F$ contains no return paths.

Now suppose that $\alpha, \alpha' \in F^*$ satisfy $r_E(\alpha) =
r_E(\alpha')$ and $s_E(\alpha) = s_E(\alpha')$.
Without loss of generality, we may assume that $|\alpha| \ge |\alpha'|$.
By definition of $F$, each vertex $v \in E^0$ receives at most one
edge in $F^1$, so $\alpha = \beta\alpha'$ for some $\beta \in F^*$.
This forces $s_E(\beta) = s_E(\alpha) = s_E(\alpha') = r_E(\beta)$,
and then $\beta$ has length $0$ by the preceding paragraph,
and $\alpha = \alpha'$.

By definition of $F$, we have $s_E(F^1) = \Delta$, which
proves~(1). As explained in the first paragraph, a path
$\alpha$ from $\Delta_{n}$ to $\omega'\in \Delta$ must have the
form $\alpha = \overline{\omega'|_{n+1}} \cdot
\overline{\omega'|_{n+2}} \cdots
\overline{\omega'|_{|\omega'|-1}} \cdot \overline{\omega'}$.
This expression makes sense if and only if $n \le |\omega'|$
and $\omega := \omega'|_n$ is in $\Delta_{n}$, and then
$\alpha$ has source $\omega$. This proves~(2). For~(3), fix
$\omega \in \Delta_{n}$ and $v \in G^0$. There is a path from
$\omega$ to $v$ if and only if $v \in W_+$ and there is a path
from $\omega$ to $\sigma(v)$. By~(2), this occurs if and only
if $n \le |\sigma(v)|$ and $\sigma(v)|_{n} = \omega$ (in
particular, $n=|\omega|$). Thus, there is a path from $\omega$
to $v$ if and only if $v \in r'(\omega)$.
\end{proof}

\begin{definition}\label{dfn:f-paths}
Lemma~\ref{lem:paths in F} implies that for each $x \in E^0$,
there is a unique element $f_x \in F^*$ such that $r_E(f_x) = x$ and
$s_E(f_x) \in W_0 \sqcup \Gamma_0$. Observe that
\begin{itemize}
\item For $x\in W_0\sqcup\Gamma_0$, we have $f_x = x$.
\item For $x = \omega\in\Gamma_+$, we have
\[
f_x:=\overline{\omega|_{m+1}}\cdot\overline{\omega|_{m+2}}\cdots
\overline{\omega|_{|\omega|-1}}\cdot\overline{\omega}
\]
where $m=\min\{k:\omega_k=1\}$.
\item For $x = v\in W_+$, we have $f_x = f_{\sigma(v)}
  \overline{v}$.
\end{itemize}
\end{definition}

\begin{example}
Consider the ultragraph of Example~\ref{ex:illustration}, and
the corresponding graph $E$ illustrated there.
\begin{itemize}
\item We have $f_{v_1} = v_1$ and $f_{00001} = 00001$
since $v_1 \in W_0$ and $00001 \in \Gamma_0$.
\item We have $f_{001000} =
  \overline{0010}\cdot\overline{00100}\cdot\overline{001000}$.
\item We have $f_{v_6} =
  \overline{10}\cdot\overline{100}\cdot\overline{1000}\cdot\overline{10000}\cdot\overline{v_6}$.
\end{itemize}
In the second two instances, it is easy to see that $f_x$ is
the unique path in double-headed arrows from $\Gamma_0$ (that
is, a vertex of the form $\circledcirc$) to $x$.
\end{example}

\begin{lemma}\label{lem:path}
For fixed $v,w \in G^0$, the map
\begin{equation}
g_0 \cdot \edge(n_1,x_1)\cdot g_1\cdot
\edge(n_2,x_2)\cdot g_{2}\cdots \edge(n_k,x_k)\cdot g_{k}
\mapsto \begin{cases}
g_0 &\text{ if $k = 0$} \\
e_{n_1}e_{n_2}\cdots e_{n_k} &\text{ otherwise}
\end{cases}
\end{equation}
is a bijection between paths in $E$ from $v$ to $w$ and
paths in $\G$ beginning at $v$ whose ranges contain $w$
where each $e_{n_i}\in\G^1$, $x_i\in \Xset{n_i}$ and $g_i\in F^*$
as in Lemma~\ref{lem:unique expression}.
\end{lemma}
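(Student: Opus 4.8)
The plan is to verify the bijection by constructing its inverse explicitly, using three ingredients already established: the canonical form of paths in $E$ (Lemma~\ref{lem:unique expression}), the structure of $F^*$ (Lemma~\ref{lem:paths in F}), and the disjoint decomposition $r(e_n)=(\Xset{n}\cap G^0)\sqcup\big(\bigsqcup_{\omega\in\Xset{n}\cap\Delta}r'(\omega)\big)$ (Lemma~\ref{lem:r(e_n) and X_n}). The observation that makes the combinatorics nearly automatic is that every edge of $F$ has source in $\Delta$, so no vertex of $G^0$ emits an edge of $F$, whence the only path in $F^*$ with source in $G^0$ is a length-zero path. Applying this to a path $\alpha=g_0\cdot\edge(n_1,x_1)\cdot g_1\cdots\edge(n_k,x_k)\cdot g_k$ from $v\in G^0$ to $w\in G^0$ with $k\ge 1$: the segment $g_0\in F^*$ has source $v\in G^0$, so $g_0=v$ and $s(e_{n_1})=v$; for $1\le i\le k-1$, the segment $g_i\in F^*$ has source $x_i\in\Xset{n_i}$ and range $s(e_{n_{i+1}})\in G^0$, so either $x_i\in G^0$ with $g_i$ trivial and $x_i=s(e_{n_{i+1}})$, or $x_i\in\Delta$ with $s(e_{n_{i+1}})\in r'(x_i)$ by Lemma~\ref{lem:paths in F}(3); in either case $s(e_{n_{i+1}})\in r(e_{n_i})$ by the easy inclusion in Lemma~\ref{lem:r(e_n) and X_n}, and the same reasoning applied to $g_k$ gives $w\in r(e_{n_k})$. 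Thus $e_{n_1}e_{n_2}\cdots e_{n_k}$ is a path in $\G$ beginning at $v$ whose last edge has range containing $w$, so the map is well defined; and for $k=0$ the only $g_0\in F^*$ from $v$ to $w$ is the trivial path $v$, which exists precisely when $w=v$, matching the length-zero path $v$ in $\G$.

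For the inverse, given a path $e_{n_1}e_{n_2}\cdots e_{n_k}$ in $\G$ with $s(e_{n_1})=v$ and $w\in r(e_{n_k})$, set $g_0:=v$; for $1\le i\le k-1$ use Lemma~\ref{lem:r(e_n) and X_n} to pick the unique $x_i\in\Xset{n_i}$ with either $x_i=s(e_{n_{i+1}})\in G^0$ or $x_i\in\Delta$ and $s(e_{n_{i+1}})\in r'(x_i)$, letting $g_i:=x_i$ in the first case and, in the second, letting $g_i$ be the $F^*$-path from $x_i$ to $s(e_{n_{i+1}})$, which exists and is unique by Lemma~\ref{lem:paths in F}. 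Apply the same recipe to $w\in r(e_{n_k})$ to obtain $x_k$ and $g_k$. Then $g_0\cdot\edge(n_1,x_1)\cdot g_1\cdots\edge(n_k,x_k)\cdot g_k$ is a genuine path in $E$ from $v$ to $w$ that maps to $e_{n_1}\cdots e_{n_k}$, while the $k=0$ case again corresponds to $w=v$, $g_0=v$.

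Finally I would check that the two composites are identities. One is immediate, since the map merely reads off the edges $e_{n_i}$. For the other, starting from $\alpha=g_0\cdot\edge(n_1,x_1)\cdots\edge(n_k,x_k)\cdot g_k$, uniqueness in Lemma~\ref{lem:unique expression} reduces the claim to showing the recipe recovers each $x_i$ and $g_i$; it does, because the disjointness in Lemma~\ref{lem:r(e_n) and X_n} forces $x_i$ to be the unique element of $\Xset{n_i}$ with $s(e_{n_{i+1}})$ (resp.\ $w$) equal to $x_i$ or lying in $r'(x_i)$, whereupon $g_i$ is forced by Lemma~\ref{lem:paths in F}. I expect the main obstacle to be purely organizational: the substantive point is that a single step of an ultragraph path from $s(e_{n_i})$ into $r(e_{n_i})$ corresponds in $E$ to the edge $\edge(n_i,x_i)$ followed by a possibly trivial $F^*$-path, with the target of the step determining $x_i$ uniquely — and making this precise is exactly the marriage of the disjoint decomposition of $r(e_n)$ in Lemma~\ref{lem:r(e_n) and X_n} with the description of $F^*$-paths into $G^0$ in Lemma~\ref{lem:paths in F}(3). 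Once that correspondence is isolated, everything else is bookkeeping, the only delicate part being to keep the length-zero segments $g_i$ (especially $g_0$) correctly accounted for.
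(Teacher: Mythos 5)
Your proposal is correct and follows essentially the same route as the paper: both reduce the bijection to the single-step claim that for each $e_n\in\G^1$ and $w\in G^0$ there is a path $\edge(n,x)\cdot g$ with $x\in \Xset{n}$, $g\in F^*$ ending at $w$ if and only if $w\in r(e_n)$, and then uniquely, which is exactly the combination of Lemma~\ref{lem:r(e_n) and X_n} with Lemma~\ref{lem:paths in F}(3) that the paper invokes. You simply write out the inverse map and the bookkeeping (including the $g_0=v$ observation) that the paper leaves terse.
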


\begin{proof}
First note that we have $g_0=v$ because $v\in G^0$
emits no edges in $F$.
Since $s_E(\edge(n_i,x_i)) = s(e_{n_i}) \in G^0$,
to show that the map is well-defined and bijection,
it suffices to show that
for each $e_{n}\in\G^1$ and $w \in G^0$
there exists a path $\alpha = \edge(n,x)\cdot g$
where $x\in \Xset{n}$ and $g\in F^*$ satisfying $r_E(\edge(n,x)) = w$
if and only if $w \in r(e_n)$,
and in this case $x\in \Xset{n}$ and $g\in F^*$ are unique.
This follows from
Lemma~\ref{lem:r(e_n) and X_n} and Lemma~\ref{lem:paths in F}~(3).
\end{proof}

We introduced notions of paths and Condition~(K) for graphs at
the beginning of this section. We now recall the corresponding
notions for ultragraphs. A \emph{path} in an ultragraph is a
sequence $\alpha = \alpha_1 \alpha_2 \dots \alpha_{|\alpha|}$
of edges such that $s(\alpha_{i+1}) \in r(\alpha_i)$ for all
$i$. We write $s(\alpha) = s(\alpha_1)$ and $r(\alpha) =
r(\alpha_{|\alpha|})$. A \emph{return path} is a path $\alpha$
such that $s(\alpha) \in r(\alpha)$. A \emph{first-return path}
is a return path $\alpha$ such that $s(\alpha) \not=
s(\alpha_i)$ for any $i \ge 1$. As in \cite[Section~7]{KMST},
we say an ultragraph $\G = (G^0, \G^1, r, s)$ satisfies
\emph{Condition~(K)} if no vertex is the base of exactly one
first-return path.

\begin{proposition}\label{prop:Cond(K)}
The graph $E$ satisfies Condition~(K) if and only if the
ultragraph $\G$ satisfies Condition~(K).
\end{proposition}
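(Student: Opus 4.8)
The plan is to use the bijection of Lemma~\ref{lem:path} to transfer first-return paths between the two settings, but with one key adjustment: a return path at a vertex $v \in G^0$ in $E$ corresponds to a return path at $v$ in $\G$, while vertices of $E$ lying in $\Delta$ need to be handled separately because they are never visited by any return path in $E$ through a ``graph'' edge $\edge(n,x)$ alone. First I would observe that every vertex of $\Delta$ is the base of \emph{no} return path in $E$: by Lemma~\ref{lem:paths in F}, $F$ contains no return paths, so any return path based at some $\omega \in \Delta$ would have to use an edge $\edge(n,x)$; but $s_E(\edge(n,x)) = s(e_n) \in G^0$, and once a path in $E$ leaves $\Delta$ via such an edge, Lemma~\ref{lem:unique expression} shows it can only re-enter $\Delta$ through the $g_i \in F^*$ pieces, never returning to a source in $\Delta$ — more precisely, the source of the whole path lies in $G^0$ as soon as it contains any $\edge(n,x)$. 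Hence for vertices in $\Delta$, Condition~(K) is automatically satisfied (they are bases of no return path), in both $E$ and (the corresponding statement about) $\G$; so it suffices to check the condition at vertices $v \in G^0$.

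Next I would fix $v \in G^0$ and set up the correspondence between first-return paths at $v$ in $E$ and first-return paths at $v$ in $\G$. By Lemma~\ref{lem:path}, return paths in $E$ from $v$ to $v$ correspond bijectively to paths in $\G$ beginning at $v$ whose range contains $v$, i.e.\ to return paths at $v$ in $\G$; under this bijection a path $g_0 \cdot \edge(n_1,x_1) \cdot g_1 \cdots \edge(n_k,x_k)\cdot g_k$ maps to $e_{n_1} e_{n_2} \cdots e_{n_k}$ (and $g_0 = v$ necessarily). The crucial point is that this bijection respects the ``first-return'' property. In $E$, the sources of the edges of such a path, read in order, are $s_E(\edge(n_i,x_i)) = s(e_{n_i}) = v_{n_i}$ together with the (intermediate) sources occurring inside the $g_i$, which all lie in $\Delta$ and hence are never equal to $v \in G^0$. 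So the path in $E$ is a first-return path at $v$ exactly when $v_{n_i} \neq v$ for $i = 2, 3, \dots, k$, i.e.\ exactly when $s(e_{n_i}) \neq v$ for $i \geq 2$, which is precisely the condition that $e_{n_1} e_{n_2} \cdots e_{n_k}$ be a first-return path at $v$ in $\G$. (When $k=1$ the path is $\edge(n_1,x_1)\cdot g_1$ with $g_1 \in F^*$, which is trivially first-return on both sides.) Thus Lemma~\ref{lem:path} restricts to a bijection between first-return paths at $v$ in $E$ and first-return paths at $v$ in $\G$.

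Finally I would assemble the argument: for $v \in G^0$, the number of first-return paths based at $v$ in $E$ equals the number of first-return paths based at $v$ in $\G$, by the bijection just established; and for vertices in $\Delta$ there are no return paths, hence no first-return paths, in $E$. Therefore ``no vertex of $E$ is the base of exactly one first-return path'' holds if and only if ``no vertex of $G^0$ is the base of exactly one first-return path in $\G$'' holds, which is exactly the statement that $E$ satisfies Condition~(K) if and only if $\G$ does. I expect the main obstacle to be the bookkeeping in the second paragraph — carefully verifying that the intermediate vertices contributed by the $F^*$-segments $g_i$ all lie in $\Delta$ and so cannot coincide with $v \in G^0$, so that the combinatorial ``first-return'' conditions on the two sides match up exactly; everything else is a direct application of Lemma~\ref{lem:unique expression}, Lemma~\ref{lem:paths in F}, and Lemma~\ref{lem:path}.
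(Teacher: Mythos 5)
Your second paragraph --- the bijection between first-return paths based at $v \in G^0$ in $E$ and first-return paths based at $v$ in $\G$ --- is correct and is exactly how the paper uses Lemma~\ref{lem:path}. The problem is your first paragraph. The claim that every vertex of $\Delta$ is the base of \emph{no} return path in $E$ is false. Your justification asserts that the source of a path lies in $G^0$ as soon as the path contains an edge $\edge(n,x)$, but this is not so: by Lemma~\ref{lem:unique expression} the path may begin with a segment $g_0 \in F^*$ of positive length, whose source lies in $\Delta$, and the path may later return to that vertex of $\Delta$, for instance through an edge $\edge(m,x)$ with $x \in \Xset{m}\cap\Delta$ (recall that $r_E(\edge(m,x)) = x$ can lie in $\Delta$). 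Concretely, in Example~\ref{ex:illustration} the path
\[
\overline{10}\cdot\overline{100}\cdot\overline{v_3}\cdot\edge(3,001)\cdot\overline{v_4}\cdot\edge(4,v_1)\cdot\edge(1,1)
\]
is a first-return path in $E$ based at the vertex $1 \in \Gamma_0 \subset \Delta$. So Condition~(K) is \emph{not} automatic at vertices of $\Delta$, and your reduction to vertices of $G^0$ does not go through as stated.

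The correct (weaker) statement, which is what the paper extracts from Lemma~\ref{lem:paths in F}, is that every return path in $E$ \emph{passes through} some vertex of $G^0$: since $F$ contains no return paths, a return path must contain an edge $\edge(n,x)$, and $s_E(\edge(n,x)) = s(e_n) \in G^0$. One then needs the standard fact that if a vertex is the base of exactly one first-return path $\alpha$, then every vertex lying on $\alpha$ is also the base of exactly one first-return path; applying this to a vertex of $G^0$ on $\alpha$ reduces Condition~(K) for $E$ to the assertion that no vertex of $G^0$ is the base of exactly one first-return path, after which your second paragraph finishes the argument. Without this rotation argument (or some substitute), the case of a vertex $\omega \in \Delta$ that is the base of a return path is simply not covered by your proof.
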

\begin{proof}
Lemma~\ref{lem:paths in F} implies that every return path in
$E$ passes through some vertex in $G^0$. Hence $E$ satisfies
Condition~(K) if and only if no vertex in $G^0$ is the base of
exactly one first-return path in $E$. This in turn happens if
and only if $\G$ satisfies Condition~(K) by
Lemma~\ref{lem:path}.
\end{proof}

\section{Full corners of graph algebras} \label{corner-sec}

Once again, we fix an ultragraph $\G$ and a graph $E$
constructed from $\G$ as in Definition~\ref{dfn:E}. We will
show that the ultragraph algebra $C^*(\G)$ is isomorphic to a
full corner of the graph algebra $C^*(E)$.

\begin{definition}\label{def:graph algebra}
The \emph{graph algebra} $C^*(E)$ of the graph
$E=(E^0,E^1,r_E,s_E)$ is the universal \Ca generated by
mutually orthogonal projections $\{q_x : x\in E^0\}$ and
partial isometries $\{t_\alpha : \alpha\in E^1\}$ with mutually
orthogonal ranges satisfying the Cuntz-Krieger relations:
\begin{enumerate}
\item $t_\alpha^*t_\alpha = q_{r_E(\alpha)}$ for all
  $\alpha\in E^1$;
\item $t_\alpha t_\alpha^* \leq q_{s_E(\alpha)}$ for all
  $\alpha\in E^1$; and
\item $q_{x} = \sum_{s_E(\alpha)=x} t_\alpha t_\alpha^*$
for $x \in E^0_{\rg}$.
\end{enumerate}
\end{definition}

As usual, for a path $\alpha=\alpha_1\alpha_2\cdots \alpha_n$ in $E$
we define $t_\alpha\in C^*(E)$
by $t_\alpha=t_{\alpha_1}t_{\alpha_2}\cdots t_{\alpha_n}$.
For $x \in E^0 \subset E^*$,
the notation $t_x$ is understood as $q_x$.
The properties (1) and (2) in Definition~\ref{def:graph algebra}
hold for all $\alpha \in E^*$.

\begin{definition}\label{dfn:Ux}
For each $x\in E^0$ define a partial isometry $U_x :=
t_{f_x}\in C^*(E)$ where $f_x \in F^*$ is as in
Definition~\ref{dfn:f-paths}.
\end{definition}

By definition of $f_x$ and the Cuntz-Krieger relations, we have
$U_x^*U_x=q_x$ and $U_xU_x^*\leq q_{s_E(f_x)}$ for $x\in E^0$.

\begin{lemma}\label{lem:uxux*uyuy*}
For $x,y\in E^0$ with $x\neq y$, we have
\[
(U_{x}U_{x}^*)(U_{y}U_{y}^*)
=\begin{cases}
U_{x}U_{x}^* & \text{if there exists a path in $F$ from $x$ to $y$,}\\
U_{y}U_{y}^* & \text{if there exists a path in $F$ from $y$ to $x$,}\\
0 & \text{otherwise.}
\end{cases}
\]
In particular, for $n\in\N$ and $x,y\in \Xset{n}$ with $x\neq
y$, we have $U_x^*U_y=0$.
\end{lemma}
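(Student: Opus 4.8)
The plan is to compute the product $(U_xU_x^*)(U_yU_y^*) = t_{f_x}t_{f_x}^* t_{f_y}t_{f_y}^*$ by analysing the middle factor $t_{f_x}^*t_{f_y}$, where $f_x, f_y \in F^*$ are the canonical paths of Definition~\ref{dfn:f-paths} ending at $x$ and $y$ respectively. The key structural input is Lemma~\ref{lem:paths in F}: in the graph $F$ every vertex receives at most one edge, so the family of paths $\{f_z : z \in E^0\}$ is a \emph{tree} of paths rooted at $W_0 \sqcup \Gamma_0$; concretely, $f_x$ and $f_y$ either have one an initial segment of the other, or they diverge. First I would recall that for $\alpha, \beta \in E^*$ the Cuntz-Krieger relations give $t_\alpha^* t_\beta = t_{\alpha'}^*$ if $\beta = \alpha\alpha'$, $t_\beta^* t_\alpha = \ldots$ wait, more precisely: $t_\alpha^*t_\beta$ equals $t_{\beta'}$ if $\beta = \alpha\beta'$, equals $t_{\alpha'}^*$ if $\alpha = \beta\alpha'$, and equals $0$ if neither $\alpha$ nor $\beta$ extends the other (using that distinct edges have orthogonal ranges, together with condition~(2)).

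Next I would split into the three cases. If there is a path in $F$ from $x$ to $y$, then since $F$ has no return paths and each vertex receives at most one edge in $F^1$, uniqueness in Lemma~\ref{lem:paths in F} forces $f_y = f_x \beta$ for the (unique) path $\beta$ in $F$ from $x$ to $y$; hence $t_{f_x}^* t_{f_y} = t_\beta$, and so $(U_xU_x^*)(U_yU_y^*) = t_{f_x} t_\beta t_\beta^* t_{f_x}^* = t_{f_x\beta}t_{f_x\beta}^* = t_{f_y}t_{f_y}^* = U_yU_y^*$. Symmetrically, a path in $F$ from $y$ to $x$ gives $f_x = f_y\gamma$ and the product equals $U_xU_x^*$. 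If neither path exists, then $f_x$ and $f_y$ share a common initial segment but then branch via distinct edges of $F^1$ (or, in the degenerate subcase, one of $x,y$ lies in $W_0\sqcup\Gamma_0$ and equals neither vertex on the other's path, so they share nothing past a vertex); either way $t_{f_x}^*t_{f_y} = 0$ because at the branching vertex the two continuing edges have orthogonal ranges, giving $(U_xU_x^*)(U_yU_y^*) = 0$.

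For the final assertion, fix $n \in \N$ and distinct $x, y \in \Xset{n}$. Here $U_x^* U_y = t_{f_x}^* t_{f_y}$, and by the trichotomy just established this is nonzero only if $f_x$ extends $f_y$ or vice versa, i.e.\ only if there is a path in $F$ between $x$ and $y$. But by Lemma~\ref{lem:paths in F}(1)--(3), any nontrivial path in $F$ goes from a vertex of $\Delta$ to a vertex of strictly larger "level": a path from $\omega$ to $\omega'$ in $\Delta$ requires $|\omega| < |\omega'|$, and a path from $\omega$ to $v \in G^0$ requires $v \in r'(\omega)$, so $|\sigma(v)| \geq |\omega| = n$ while the $G^0$-part of $\Xset{n}$ consists of vertices with $|\sigma(v)| < n$. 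Checking each of the four possibilities for the types of $x$ and $y$ (both in $\Delta_n$; both in $G^0$; one of each) shows no path in $F$ can connect two distinct elements of $\Xset{n}$: two distinct elements of $\Delta_n$ have equal length, two elements of $G^0\cap\Xset{n}$ both have $|\sigma(\cdot)| < n$ hence cannot lie in any $r'(\omega)$ with $|\omega| = n$, and a path from $\omega \in \Delta_n$ to $v \in G^0\cap\Xset{n}$ is impossible since $|\sigma(v)| < n$. Hence $U_x^*U_y = 0$.

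I expect the main obstacle to be the bookkeeping in the ``otherwise'' case: carefully justifying that when neither of $x,y$ is reachable from the other in $F$, the paths $f_x$ and $f_y$ genuinely diverge through distinct edges with orthogonal ranges (rather than, say, one being a prefix of the other after all), and that this vanishing survives the outer multiplication by $t_{f_x}$ and $t_{f_y}^*$. This comes down to using the ``tree'' structure of $F$ from Lemma~\ref{lem:paths in F} — specifically that the common target of $f_x$ and $f_y$, if any, is a single vertex, and past it the two paths use different edges out of that vertex — and then invoking the orthogonality of ranges of distinct edges in the definition of $C^*(E)$.
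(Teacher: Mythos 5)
Your proof is correct and takes essentially the same route as the paper's: both arguments rest on the tree structure of $F$ (each vertex receives at most one edge of $F^1$ and there are no return paths), so that $t_{f_x}^*t_{f_y}\neq 0$ only when one of $f_x,f_y$ extends the other, which by uniqueness of the paths $f_x$ happens exactly when there is a path in $F$ between $x$ and $y$; the ``in particular'' clause then follows, as in the paper, from the observation that no path in $F$ joins two distinct vertices of $\Xset{n}$. One point worth flagging: your computation yields $(U_xU_x^*)(U_yU_y^*)=U_yU_y^*$ when there is a path in $F$ from $x$ to $y$ (since then $f_y=f_x\beta$, so $U_yU_y^*\leq U_xU_x^*$), which is the transpose of the displayed formula in the statement; your version is the correct one, as it agrees both with the paper's own proof (which shows $(U_xU_x^*)U_y=U_y$ when $f_y$ extends $f_x$) and with the subsequent use of the lemma in Lemma~\ref{lem:P_vQ'_o}(4), so the first two cases of the printed statement are simply interchanged and this is not a defect of your argument.
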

\begin{proof}
Without loss of generality, we may assume $|f_x| \le |f_y|$.
Then $U_{x}^*U_{y}\neq 0$ if and only if $f_y$ extends $f_x$,
and in this case $(U_{x}U_{x}^*)U_{y}=U_{y}$. By the uniqueness
of $f_y$ in $F^*$ stated in Definition~\ref{dfn:f-paths}, $f_y$
extends $f_x$ exactly when there exists a path in $F$ from $x$
to $y$.

For the last statement, observe that by Lemma~\ref{lem:r(e_n)
and X_n} and Lemma~\ref{lem:paths in F}, there exist no paths
in $F$ among vertices in $\Xset{n}$. Hence $U_x^*U_y =
U_x^*U_xU^*_x U_y U^*_y U_y = 0$.
\end{proof}

\begin{definition}\label{dfn:G-family in E}
For $v\in G^0$, we set $P_v := U_vU_v^*$. For $e_n\in\G^1$, we
set
\[
S_{e_n} := U_{s(e_n)}\sum_{x\in \Xset{n}}t_{\edge(n,x)}U_{x}^*.
\]
\end{definition}

It is clear that $P_v$ is a nonzero projection, and the last
statement of Lemma~\ref{lem:uxux*uyuy*} implies that $S_{e_n}$
is a partial isometry. We will show in Proposition
\ref{prop:ELGfam} that the collection $\{P_v : v\in G^0\}$ and
$\{S_{e_n} : e_n\in\G^1\}$ is an Exel-Laca $\G$-family in
$C^*(E)$.

\begin{definition}\label{dfn:Q'}
For $e_n \in \G^1$, we define $Q_{e_n}:=S_{e_n}^*S_{e_n}\in
C^*(E)$. For $\omega \in \bigsqcup_{n=1}^\infty (\{0,1\}^n
\setminus \{0^n\})$, we define
\[
Q'_\omega := \begin{cases}
U_{\omega}U_{\omega}^* &\text{ if $\omega \in \Delta$} \\
0&\text{ otherwise.}
\end{cases}
\]
\end{definition}

The projections $Q'_\omega$ are related to the sets
$r'(\omega)$ of the preceding section (see
Proposition~\ref{prop:r'(o)}).

\begin{lemma}\label{lem:P_vQ'_o}
The collections $\{P_v : v\in G^0\}$ and $\{Q_\omega' :
\omega\in \bigsqcup_{n=1}^\infty (\{0,1\}^n \setminus
\{0^n\})\}$ of projections satisfy the following:
\begin{enumerate}
\item $\{P_v : v\in G^0\}$ are pairwise orthogonal.
\item $\{Q_\omega' : \omega\in \bigsqcup_{n=1}^\infty
    (\{0,1\}^n \setminus \{0^n\})\}$ pairwise commute.
\item $\{Q_\omega' : \omega\in \{0,1\}^n \setminus
    \{0^n\}\}$ are pairwise orthogonal for each $n \in \N$.
\item $P_v Q_\omega' = Q_\omega' P_v = P_v$ if $v \in r'(\omega)$,
and $P_v Q_\omega' = Q_\omega' P_v = 0$ if $v \notin r'(\omega)$.
\end{enumerate}
\end{lemma}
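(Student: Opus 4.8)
The plan is to reduce each claim to a statement about paths in the graph $F$, using the identity $U_x = t_{f_x}$ and the fundamental relations $U_x^*U_x = q_x$, $U_xU_x^* \le q_{s_E(f_x)}$ established just before Lemma~\ref{lem:uxux*uyuy*}, together with Lemma~\ref{lem:uxux*uyuy*} itself and Lemma~\ref{lem:paths in F}~(3) relating $r'(\omega)$ to paths in $F$. Recall $P_v = U_vU_v^*$ and $Q'_\omega = U_\omega U_\omega^*$ when $\omega \in \Delta$ (and $0$ otherwise); so everything is phrased in terms of the range projections of the partial isometries $t_{f_x}$, whose behaviour under multiplication is governed entirely by whether one of $f_x$, $f_y$ extends the other, i.e.\ by the existence of a path in $F$ between $x$ and $y$.

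For (1): since $v \ne w$ in $G^0$ emit no edges in $F$, there is no path in $F$ from $v$ to $w$ nor from $w$ to $v$, so Lemma~\ref{lem:uxux*uyuy*} gives $P_vP_w = (U_vU_v^*)(U_wU_w^*) = 0$. For (2) and (3): given $\omega, \omega' \in \bigsqcup_n (\{0,1\}^n\setminus\{0^n\})$, if either fails to lie in $\Delta$ then the corresponding $Q'$ is $0$ and commutativity (resp.\ orthogonality) is trivial; if both lie in $\Delta$, apply Lemma~\ref{lem:uxux*uyuy*} to $Q'_\omega = U_\omega U_\omega^*$ and $Q'_{\omega'} = U_{\omega'}U_{\omega'}^*$: the product equals one of $Q'_\omega$, $Q'_{\omega'}$, or $0$ according to whether there is a path in $F$ between $\omega$ and $\omega'$, and in every case the two orderings of the product agree, so they commute. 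For (3), if moreover $|\omega| = |\omega'| = n$ and $\omega \ne \omega'$, Lemma~\ref{lem:paths in F}~(2) shows there is no path in $F$ in either direction between two distinct vertices of $\Delta_n$, so the product is $0$.

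For (4), which I expect to be the only genuinely substantive case: fix $v \in G^0$ and $\omega$. If $\omega \notin \Delta$ then $Q'_\omega = 0$, and $r'(\omega) = \emptyset$ by Lemma~\ref{lem:r'empty}, so both alternatives read $0 = 0$. Assume $\omega \in \Delta_n$. By Lemma~\ref{lem:paths in F}~(3), $v \in r'(\omega)$ if and only if there is a path in $F$ from $\omega$ to $v$. If $v \in r'(\omega)$, then since $|\omega| = n \le |\sigma(v)| = |f_v| - (\text{length from }\Gamma_0\text{ to }\omega)$... more simply, Lemma~\ref{lem:uxux*uyuy*} applied to $x = \omega$, $y = v$ — noting there \emph{is} a path in $F$ from $\omega$ to $v$, hence $|f_\omega| \le |f_v|$ and $f_v$ extends $f_\omega$ — yields $(U_\omega U_\omega^*)(U_vU_v^*) = U_vU_v^* = P_v$ and likewise $(U_vU_v^*)(U_\omega U_\omega^*) = P_v$; thus $Q'_\omega P_v = P_v Q'_\omega = P_v$. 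If $v \notin r'(\omega)$, then there is no path in $F$ from $\omega$ to $v$; there is also no path from $v$ to $\omega$ since $v \in G^0$ emits no edges in $F$; so Lemma~\ref{lem:uxux*uyuy*} gives $P_vQ'_\omega = Q'_\omega P_v = 0$. The main obstacle, such as it is, is simply bookkeeping the case $v = \omega$ (impossible, as $v \in G^0$ and $\omega \in \Delta$ are in disjoint parts of $E^0$) and the degenerate case $\omega \notin \Delta$, so that Lemma~\ref{lem:uxux*uyuy*}, which is stated for $x \ne y$ and for genuine $U$'s, applies cleanly; once these are dispatched, every part follows by citing Lemma~\ref{lem:uxux*uyuy*} and Lemma~\ref{lem:paths in F}.
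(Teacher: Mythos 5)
Your proposal is correct and follows essentially the same route as the paper, whose entire proof of this lemma is the observation that the four assertions follow immediately from Lemma~\ref{lem:paths in F} and Lemma~\ref{lem:uxux*uyuy*}; you have simply supplied the case-by-case details (including the adjoint/projection argument for commutativity and the degenerate cases $\omega \notin \Delta$). One remark in your favour: in part (4) you compute $(U_\omega U_\omega^*)(U_vU_v^*) = U_vU_v^* = P_v$ when there is a path in $F$ from $\omega$ to $v$, which is what the \emph{proof} of Lemma~\ref{lem:uxux*uyuy*} actually establishes (the product of nested range projections is the one further along the path), even though the two nonzero cases in that lemma's displayed statement appear to be transposed --- so your reading is the correct one and is the one needed here.
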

\begin{proof}
By Lemma~\ref{lem:paths in F} paths in $F$ are uniquely
determined by their ranges and sources, and
Lemma~\ref{lem:uxux*uyuy*} shows how the $U_x U^*_x$ multiply.
The four assertions follow immediately.
\end{proof}

\begin{lemma}\label{lem:Q_e=}
For $n\in\N$, we have
\[
Q_{e_n}=\sum_{x\in \Xset{n}}U_xU_x^*=
\sum_{\substack{\omega\in \{0,1\}^{n}\\ \omega_n=1}}Q'_\omega
+\sum_{\substack{v\in r(e_{n})\\ |\sigma(v)|<n}}P_v.
\]
\end{lemma}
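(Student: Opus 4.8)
The plan is to compute $Q_{e_n} = S_{e_n}^*S_{e_n}$ directly from its definition and then to re-express the resulting sum of range projections in terms of the $P_v$ and the $Q'_\omega$.

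First I would substitute $S_{e_n} = U_{s(e_n)}\sum_{x\in \Xset{n}}t_{\edge(n,x)}U_{x}^*$ (Definition~\ref{dfn:G-family in E}) and expand $S_{e_n}^*S_{e_n}$ as a double sum over $x,y\in\Xset{n}$. The central factor $U_{s(e_n)}^*U_{s(e_n)}=q_{s(e_n)}$ can be absorbed: since $s_E(\edge(n,x))=s(e_n)$, relation~(2) of Definition~\ref{def:graph algebra} gives $q_{s(e_n)}t_{\edge(n,x)}=t_{\edge(n,x)}$, hence $t_{\edge(n,x)}^*q_{s(e_n)}=t_{\edge(n,x)}^*$. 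What remains is $\sum_{x,y\in\Xset{n}}U_x\,t_{\edge(n,x)}^*t_{\edge(n,y)}\,U_y^*$. Because the edges $\edge(n,x)$, $x\in\Xset{n}$, are pairwise distinct and the generators $t_\alpha$ of $C^*(E)$ have mutually orthogonal ranges, we have $t_{\edge(n,x)}^*t_{\edge(n,y)}=0$ for $x\neq y$ and $t_{\edge(n,x)}^*t_{\edge(n,x)}=q_{r_E(\edge(n,x))}=q_x$; together with $U_xq_x=U_x$ (which follows from $U_x^*U_x=q_x$), the double sum collapses to $\sum_{x\in\Xset{n}}U_xU_x^*$. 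This establishes the first equality.

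For the second equality I would split the index set using Definition~\ref{def:X_n}, namely $\Xset{n}=\{v\in r(e_n):|\sigma(v)|<n\}\sqcup\{\omega\in\Delta_n:\omega_n=1\}$. On the first piece $U_vU_v^*=P_v$ by Definition~\ref{dfn:G-family in E}, and on the second piece $U_\omega U_\omega^*=Q'_\omega$ by Definition~\ref{dfn:Q'}, since such $\omega$ lie in $\Delta$. The only remaining point is to enlarge the $Q'$-sum from $\{\omega\in\Delta_n:\omega_n=1\}$ to all $\omega\in\{0,1\}^n$ with $\omega_n=1$: this is legitimate because $\omega_n=1$ forces $\omega\neq 0^n$, so $Q'_\omega$ is defined, and $Q'_\omega=0$ for $\omega\notin\Delta$, so the added terms contribute nothing. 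None of this presents a real obstacle; the only care needed is the bookkeeping with index sets and the correct invocation of the orthogonality of the ranges of the $t_{\edge(n,x)}$ together with the absorption identity for $q_{s(e_n)}$.
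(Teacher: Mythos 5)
Your proposal is correct and follows essentially the same route as the paper's proof: expand $S_{e_n}^*S_{e_n}$ as a double sum, use orthogonality of the range projections of the $t_{\edge(n,x)}$ to collapse it to $\sum_{x\in \Xset{n}}U_xU_x^*$, then split the index set according to Definition~\ref{def:X_n} and use that $Q'_\omega=0$ for $\omega\notin\Delta$. The paper is terser (it silently absorbs the factor $U_{s(e_n)}^*U_{s(e_n)}$ that you justify explicitly), but the argument is the same.
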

\begin{proof}
We compute:
\begin{align*}
Q_{e_n}
&= S_{e_n}^* S_{e_n} \\
&=\bigg(\sum_{x\in \Xset{n}}U_xt_{\edge(n,x)}^*\bigg)U_{s(e_n)}^*U_{s(e_n)}
\bigg(\sum_{y\in \Xset{n}}t_{\edge(n,y)}U_y^*\bigg) \\
&=\sum_{x,y\in \Xset{n}}(U_xt_{\edge(n,x)}^*t_{\edge(n,y)}U_y^*).
\end{align*}
Since $t_{\edge(n,x)}^*t_{\edge(n,y)}=0$ for $x,y\in \Xset{n}$
with $x\neq y$, we deduce that $Q_{e_n}=\sum_{x\in
\Xset{n}}U_xU_x^*$ as claimed.

By the definition of $\Xset{n}$, we have
\[
\sum_{x\in \Xset{n}}U_xU_x^*=
\sum_{\substack{\omega\in \{0,1\}^{n}\\ \omega_n=1}}Q'_\omega
+\sum_{\substack{v\in r(e_{n})\\ |\sigma(v)|<n}}P_v.\qedhere
\]
\end{proof}

\begin{lemma}\label{lem:Q_e}
The collection of projections $\{Q_{e} : e\in\G^1\}$ satisfy
the following:
\begin{enumerate}
\item $\{Q_{e} : e\in\G^1\}$ pairwise commute.
\item $P_vQ_e=Q_eP_v=P_v$ if $v\in r(e)$,
and $P_vQ_e=Q_eP_v=0$ if $v\notin r(e)$.
\item For $n \in \N$ and $\omega \in \{0,1\}^n \setminus
  \{0^n\}$, we have $Q_\omega'Q_{e_n} = Q_{e_n}Q_\omega'
  = Q_\omega'$ if $\omega_n=1$, and $Q_\omega'Q_{e_n} =
  Q_{e_n}Q_\omega' = 0$ if $\omega_n=0$.
\end{enumerate}
\end{lemma}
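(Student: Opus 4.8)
The plan is to derive all three parts from the two descriptions of $Q_{e_n}$ recorded in Lemma~\ref{lem:Q_e=}, namely $Q_{e_n}=\sum_{x\in \Xset{n}}U_xU_x^*$ and $Q_{e_n}=\sum_{\omega\in\{0,1\}^n,\ \omega_n=1}Q'_\omega+\sum_{v\in r(e_n),\ |\sigma(v)|<n}P_v$, together with the multiplication rule for the range projections $U_xU_x^*$ from Lemma~\ref{lem:uxux*uyuy*} and the relations among the $P_v$ and $Q'_\omega$ from Lemma~\ref{lem:P_vQ'_o}. Since $Q_{e_n}=S_{e_n}^*S_{e_n}$, $P_v$, and $Q'_\omega$ are all projections, in each part I would verify one of the two claimed equalities and then obtain the other by applying the $*$-operation.

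For part (1), I would first note that the range projections commute in general: for $x,y\in E^0$, Lemma~\ref{lem:uxux*uyuy*} shows that $(U_xU_x^*)(U_yU_y^*)$ is one of $U_xU_x^*$, $U_yU_y^*$, $0$, all of which are self-adjoint, so $(U_xU_x^*)(U_yU_y^*)=\big((U_xU_x^*)(U_yU_y^*)\big)^*=(U_yU_y^*)(U_xU_x^*)$. As $Q_{e_m}=\sum_{x\in\Xset{m}}U_xU_x^*$ and $Q_{e_n}=\sum_{y\in\Xset{n}}U_yU_y^*$ are finite sums of such projections, they commute, which is (1).

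For parts (2) and (3) I would use the second description of $Q_{e_n}$. Multiplying by $P_v$ and applying Lemma~\ref{lem:P_vQ'_o}(1) and~(4), the term $P_vP_w$ contributes $P_v$ precisely when $w=v$, and $P_vQ'_\omega$ contributes $P_v$ precisely when $v\in r'(\omega)$. The key point is the way the two index sets in the formula for $Q_{e_n}$ fit together via Lemma~\ref{lem:r(e_n) and X_n}: a vertex with $|\sigma(v)|<n$ lies in no set $r'(\omega)$ (which would force $|\sigma(v)|\ge n$), the sets $r'(\omega)$ for distinct $\omega\in\{0,1\}^n$ are disjoint, and those $r'(\omega)$ that occur in the first sum are contained in $r(e_n)$. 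Hence when $v\in r(e_n)$ exactly one summand of $Q_{e_n}$ is non-orthogonal to $P_v$ — either the copy of $P_v$ in the second sum (if $|\sigma(v)|<n$) or the unique $Q'_\omega$ in the first sum with $v\in r'(\omega)$ (if $|\sigma(v)|\ge n$) — so $P_vQ_{e_n}=P_v$; and when $v\notin r(e_n)$, no summand meets $P_v$, so $P_vQ_{e_n}=0$. For part (3), if $\omega\notin\Delta$ then $Q'_\omega=0$ and there is nothing to prove; otherwise, multiplying the second description by $Q'_\omega$ with $\omega\in\{0,1\}^n\setminus\{0^n\}$, the fact that the $Q'_\eta$ with $\eta\in\{0,1\}^n\setminus\{0^n\}$ are pairwise orthogonal (Lemma~\ref{lem:P_vQ'_o}(3)) annihilates every term of the first sum except the one with $\eta=\omega$, which is present exactly when $\omega_n=1$, while the second sum vanishes because $v\in r'(\omega)$ would again require $|\sigma(v)|\ge n$. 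This gives $Q'_\omega Q_{e_n}=Q'_\omega$ if $\omega_n=1$ and $Q'_\omega Q_{e_n}=0$ if $\omega_n=0$. In each case the reversed products follow by taking adjoints.

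The hard part here is not analytic but combinatorial: the only thing needing care is tracking, for a given vertex $v\in r(e_n)$, which piece of the partition $r(e_n)=(\Xset{n}\cap G^0)\sqcup\bigsqcup_{\omega\in\Xset{n}\cap\Delta}r'(\omega)$ it belongs to, and exploiting the disjointness of the $r'(\omega)$; once this is set up, every assertion reduces directly to Lemmas~\ref{lem:r(e_n) and X_n}, \ref{lem:uxux*uyuy*}, and~\ref{lem:P_vQ'_o}.
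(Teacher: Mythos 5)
Your proposal is correct and follows essentially the same route as the paper, which simply cites Lemmas~\ref{lem:P_vQ'_o}, \ref{lem:Q_e=}, and \ref{lem:r(e_n) and X_n} and calls the computations routine; your write-up supplies exactly those computations. The only cosmetic difference is that for part (1) you use the description $Q_{e_n}=\sum_{x\in \Xset{n}}U_xU_x^*$ together with Lemma~\ref{lem:uxux*uyuy*} rather than the decomposition into $Q'_\omega$'s and $P_v$'s, which is an equally valid (and equally routine) way to see the commutativity.
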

\begin{proof}
Assertions (1)~and~(2) follow from routine calculations using
Lemma~\ref{lem:P_vQ'_o} and Lemma~\ref{lem:Q_e=}. Assertion~(3)
follows from similar calculations using the decomposition of
$r(e_n)$ from Lemma~\ref{lem:r(e_n) and X_n}.
\end{proof}

\begin{lemma}\label{lem:Q'}
For $\omega\in \{0,1\}^n\setminus\{0^n\}$,
we have
\[
Q'_\omega=Q'_{(\omega, 0)}+Q'_{(\omega, 1)}
+\sum_{\substack{v\in r(\omega)\\ |\sigma(v)|=n}}P_v.
\]
\end{lemma}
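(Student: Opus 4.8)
The plan is to relate $Q'_\omega = U_\omega U_\omega^*$ to the edges of $F$ emitted by $\omega \in \Delta$ and then invoke the Cuntz-Krieger relation at the vertex $\omega$. First I would dispose of the case $\omega \notin \Delta$: then $Q'_\omega = 0$ by definition, and also $(\omega,0),(\omega,1) \notin \Delta$ by Remark~\ref{rmk:omegai}, so $Q'_{(\omega,0)} = Q'_{(\omega,1)} = 0$; moreover $r'(\omega) = \emptyset$ by Lemma~\ref{lem:r'empty}, so $\{v \in r(\omega) : |\sigma(v)| = n\} = \emptyset$ as well (any such $v$ would lie in $r'(\omega)$). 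Hence both sides vanish and we may assume $\omega \in \Delta$.

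For $\omega \in \Delta$, the key is that $\omega = r_E(f_\omega)$ and $U_\omega = t_{f_\omega}$, so $Q'_\omega = U_\omega U_\omega^* = t_{f_\omega} q_\omega t_{f_\omega}^*$. By Proposition~\ref{prp:reg verts}, $\omega \in E^0_{\rg}$, so the Cuntz-Krieger relation gives $q_\omega = \sum_{s_E(\alpha) = \omega} t_\alpha t_\alpha^*$. I would identify the edges $\alpha$ with $s_E(\alpha) = \omega$: by Definition~\ref{dfn:E}, these are exactly the edges of the form $\overline{x}$ with $s_E(\overline{x}) = \omega$, namely $\overline{v}$ for $v \in W_+$ with $\sigma(v) = \omega$ (i.e.\ $v \in \sigma^{-1}(\omega)$) together with $\overline{\omega'}$ for $\omega' \in \Gamma_+$ with $\omega'|_{|\omega'|-1} = \omega$ (which, since $r(\omega) = r(\omega,0) \sqcup r(\omega,1)$, means $\omega' = (\omega,i)$ with $(\omega,i) \in \Delta$, necessarily in $\Gamma_+$ because $\omega \ne 0^n$). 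For each such $v \in \sigma^{-1}(\omega)$, the uniqueness of $f_v = f_{\sigma(v)}\overline{v} = f_\omega \overline{v}$ in Definition~\ref{dfn:f-paths} gives $t_{f_\omega} t_{\overline{v}} = t_{f_v} = U_v$, hence $t_{f_\omega}(t_{\overline{v}} t_{\overline{v}}^*) t_{f_\omega}^* = U_v U_v^* = P_v$. Similarly, for $(\omega,i) \in \Delta$, $f_{(\omega,i)} = f_\omega \overline{(\omega,i)}$, so $t_{f_\omega}(t_{\overline{(\omega,i)}} t_{\overline{(\omega,i)}}^*) t_{f_\omega}^* = U_{(\omega,i)} U_{(\omega,i)}^* = Q'_{(\omega,i)}$. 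Conjugating the Cuntz-Krieger relation by $t_{f_\omega}$ (using $t_{f_\omega}^* t_{f_\omega} = q_\omega$) then yields
\[
Q'_\omega = \sum_{i \in \{0,1\},\ (\omega,i) \in \Delta} Q'_{(\omega,i)} + \sum_{v \in \sigma^{-1}(\omega)} P_v.
\]
Finally I would note that $Q'_{(\omega,i)} = 0$ whenever $(\omega,i) \notin \Delta$, so the first sum is just $Q'_{(\omega,0)} + Q'_{(\omega,1)}$, and that $\sigma^{-1}(\omega) = \{v \in r(\omega) : |\sigma(v)| = n\}$ as observed in Remark~\ref{rem:r'=r'} (since $v \in r(\sigma(v)) \subset r(\sigma(v)|_n)$ forces $\sigma(v)|_n = \omega$ when $|\sigma(v)| = n$, and conversely), giving the stated formula.

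The main obstacle is the bookkeeping in identifying $s_E^{-1}(\omega) \cap F^1$ correctly and checking that conjugation by $t_{f_\omega}$ transports each $t_{\overline{x}} t_{\overline{x}}^*$ to the claimed rank-one-type projection; this rests on the uniqueness clause in Definition~\ref{dfn:f-paths} (equivalently Lemma~\ref{lem:paths in F}) to guarantee that $f_\omega$ concatenated with a single further $F$-edge is again the canonical path $f_x$. Once that concatenation identity is in hand, the rest is a direct application of the Cuntz-Krieger relation at the regular vertex $\omega$ and is purely formal.
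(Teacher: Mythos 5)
Your proof is correct and follows essentially the same route as the paper: handle $\omega\notin\Delta$ trivially, then for $\omega\in\Delta$ apply the Cuntz--Krieger relation at the regular vertex $\omega$ and conjugate by $U_\omega=t_{f_\omega}$, using $f_{(\omega,i)}=f_\omega\overline{(\omega,i)}$ and $f_v=f_\omega\overline{v}$ to identify the resulting projections. The extra details you supply (identifying $s_E^{-1}(\omega)\cap F^1$ and the identity $\sigma^{-1}(\omega)=\{v\in r(\omega):|\sigma(v)|=n\}$) are exactly what the paper leaves implicit.
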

\begin{proof}
For $\omega\notin\Delta$ both sides of the equation are zero.
For $\omega\in\Delta$, we have $\omega\in E^0_{\rg}$ by
Proposition~\ref{prp:reg verts}. Hence by the Cuntz-Krieger
relations, we have
\begin{align*}
q_\omega
&=\sum_{\substack{i\in \{0,1\}\\ (\omega, i)\in\Delta}}
t_{\overline{(\omega, i)}}t_{\overline{(\omega, i)}}^*
+\sum_{\substack{v\in G^0\\ \sigma(v)=\omega}}
t_{\overline{v}}t_{\overline{v}}^*\\
&=\sum_{\substack{i\in \{0,1\}\\ (\omega, i)\in\Delta}}
t_{\overline{(\omega, i)}}t_{\overline{(\omega, i)}}^*
+\sum_{\substack{v\in r(\omega)\\ |\sigma(v)|=n}}
t_{\overline{v}}t_{\overline{v}}^*.
\end{align*}
Multiplying by $U_\omega$ on the left and by $U_\omega^*$ on
the right gives the desired equation.
\end{proof}

\begin{definition}\label{dfn:Q_o}
For $n\in\N$ and $\omega\in\{0,1\}^n\setminus\{0^n\}$, we
define $Q_\omega\in C^*(E)$ by
\[
Q_\omega := \prod_{\omega_i=1}Q_{e_i}\prod_{\omega_j=0}(1-Q_{e_j}).
\]
\end{definition}

\begin{lemma}\label{lem:Q_o}
For every $\omega \in \{0,1\}^n\setminus\{0^n\}$, we have
\begin{equation}\label{eq:Q(1-Q)}
Q_\omega = Q'_\omega +
\sum_{\substack{v\in r(\omega)\\ |\sigma(v)|<|\omega|}}P_v.
\end{equation}
\end{lemma}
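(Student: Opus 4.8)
The plan is to prove \eqref{eq:Q(1-Q)} by induction on the number of coordinates of $\omega$ equal to $0$. First I would handle the base case, where $\omega$ has no zero coordinates among the relevant positions, so that $Q_\omega = \prod_{\omega_i=1}Q_{e_i}$ is a product of the $Q_{e_i}$ with $\omega_i=1$. Since by Lemma~\ref{lem:Q_e}~(1) these commute, I would compute the product by repeatedly applying Lemma~\ref{lem:Q_e=} together with the multiplication rules from Lemmas~\ref{lem:P_vQ'_o} and~\ref{lem:Q_e}. Concretely, writing each $Q_{e_i}$ as a sum of $Q'_\eta$'s and $P_v$'s, multiplying out, and using that $Q'_\eta Q_{e_i} = Q'_\eta$ when $\eta_i = 1$ and $0$ otherwise (Lemma~\ref{lem:Q_e}~(3)), and that $P_v Q_{e_i} = P_v$ when $v\in r(e_i)$ and $0$ otherwise (Lemma~\ref{lem:Q_e}~(2)), the product collapses to $Q'_\omega$ (coming from the unique $Q'_\eta$ with $\eta$ extending the pattern of $1$'s correctly at length $|\omega|$) plus a sum of $P_v$ over $v$ in the intersection $\bigcap_{\omega_i=1} r(e_i)$ with $|\sigma(v)|<|\omega|$; comparing with the definition of $r(\omega)$ and $r'(\omega)$ and using Lemma~\ref{lem:r(e_n) and X_n} should give exactly the claimed right-hand side.

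Next I would do the inductive step: suppose $\omega$ has a coordinate $j$ with $\omega_j = 0$, and let $\omega^{(j)}$ be the element obtained from $\omega$ by changing that coordinate to $1$; but more cleanly, I would instead take $\tau$ to be $\omega$ with all the $1$'s it has, and peel off one factor $(1-Q_{e_j})$ at a time. So write $Q_\omega = Q_{\omega'}(1 - Q_{e_j})$ where $\omega'$ agrees with $\omega$ except that we have not yet imposed the $j$-th factor — this requires a little care since the inductive hypothesis is stated for elements of $\{0,1\}^n\setminus\{0^n\}$, so I would phrase the induction on the set $Z$ of indices $j$ with $\omega_j = 0$, comparing $\omega$ with the element $\omega^+$ having $\omega^+_j = 1$ and agreeing elsewhere. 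By the identity $Q_\omega = Q_{\omega^+} + Q_{\omega\setminus\{j\}}\cdot\text{(something)}$ — more precisely, using $\prod_{\omega_i=1}Q_{e_i}\prod_{\omega_j=0,\,j\ne j_0}(1-Q_{e_j}) = Q_\omega + Q_{\omega^+}$ where $\omega^+$ has a $1$ in position $j_0$ — the inductive hypothesis applies to both $Q_{\omega^+}$ and to the shorter product, and subtracting gives the result for $Q_\omega$, provided the $P_v$ and $Q'$ bookkeeping matches. The key combinatorial identity driving this is Lemma~\ref{lem:Q'}, which says $Q'_\eta = Q'_{(\eta,0)} + Q'_{(\eta,1)} + \sum_{v\in r(\eta),\,|\sigma(v)|=|\eta|}P_v$; I expect the right-hand side of \eqref{eq:Q(1-Q)} to be exactly engineered so that adding and subtracting these pieces telescopes correctly, since $r(\omega) = r(\omega^+) \sqcup (\text{the part of }r(\omega)\text{ killed by }1-Q_{e_j})$ is controlled by $r(\omega,i) = r(\omega)\cap r(e_j)$ versus $r(\omega)\setminus r(e_j)$.

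The main obstacle I anticipate is keeping the index/length bookkeeping straight: the statement of \eqref{eq:Q(1-Q)} refers to $|\sigma(v)| < |\omega|$, and in the inductive step the length $|\omega|$ may change depending on how one sets up the recursion (e.g., whether $j$ is the last coordinate or an interior one). One must be careful that in the decomposition $Q_\omega = Q_{\omega^+} + (\text{rest})$, the ``rest'' term involves $P_v$ with $v\in r(\omega)\setminus r(e_j)$, and that such $v$ have $|\sigma(v)| < |\omega|$ — this should follow because if $|\sigma(v)|\ge |\omega|$ then $v\in r'(\omega|_{|\omega|})$ is governed by $\sigma(v)$, which either extends the pattern (so $v\in r(e_j)$, contradiction) or does not (so $v\notin r(\omega)$); the cleanest route may be to use $r'(\omega) = r'(\omega,0)\sqcup r'(\omega,1)\sqcup\sigma^{-1}(\omega)$ from Remark~\ref{rem:r'=r'} in concert with Lemma~\ref{lem:r(e_n) and X_n}. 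An alternative, and perhaps more transparent, approach avoiding the induction would be to directly expand $Q_\omega = \prod_{\omega_i=1}Q_{e_i}\prod_{\omega_j=0}(1-Q_{e_j})$ using the formula of Lemma~\ref{lem:Q_e=} for each factor, multiply everything out using the orthogonality relations among the $Q'_\eta$ (Lemma~\ref{lem:P_vQ'_o}~(3)) and the action of $Q_{e_j}$ on $Q'_\eta$ and $P_v$ (Lemma~\ref{lem:Q_e}~(2),(3)), and observe that all cross terms vanish except the one producing $Q'_\omega$ plus the $P_v$ with $v\in r(\omega)$ and $|\sigma(v)|<|\omega|$; I would probably present whichever of these two is shorter once the details are checked, but I expect the direct expansion to be the safer bet.
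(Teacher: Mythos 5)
Your plan correctly isolates the two facts that drive the paper's argument --- the expansion of $Q_{e_n}$ in Lemma~\ref{lem:Q_e=} and the refinement identity of Lemma~\ref{lem:Q'} --- but the induction scheme you propose has a structural gap. You induct on the number of zero coordinates and, in the inductive step, write $Q_\omega = X - Q_{\omega^+}$ where $X = \prod_{\omega_i=1}Q_{e_i}\prod_{\omega_j=0,\,j\neq j_0}(1-Q_{e_j})$. But $X$ is a product over a \emph{proper subset} of $\{1,\dots,|\omega|\}$: it is not equal to $Q_{\omega'}$ for any $\omega'\in\{0,1\}^m\setminus\{0^m\}$ unless $j_0$ is the last coordinate, in which case $X = Q_{\omega|_{|\omega|-1}}$. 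So the inductive hypothesis, which is a statement about the $Q_{\omega'}$ only, does not apply to $X$ for an interior $j_0$; and if you insist on $j_0=|\omega|$ you can only peel zeros off the end, which fails whenever $\omega_{|\omega|}=1$ but $\omega$ has interior zeros. You flag this yourself (``this requires a little care'') but do not resolve it. The same difficulty affects your base case and your alternative direct expansion: you invoke Lemma~\ref{lem:Q_e}~(3) for products $Q'_\eta Q_{e_i}$, but that lemma only covers $i=|\eta|$; the cross-length products $Q'_\eta Q_{e_i}$ with $i\neq|\eta|$, and $Q'_\eta Q'_{\eta'}$ with $|\eta|\neq|\eta'|$, are exactly where the work lies, and handling them amounts to iterating Lemma~\ref{lem:Q'} --- that is, to the induction on length you are trying to sidestep.

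The paper's proof inducts on $n=|\omega|$ instead. Given the formula for all $\theta\in\{0,1\}^n\setminus\{0^n\}$, it first applies Lemma~\ref{lem:Q'} to rewrite $Q_\theta = Q'_{(\theta,0)}+Q'_{(\theta,1)}+\sum_{v\in r(\theta),\,|\sigma(v)|<n+1}P_v$, and then treats the three ways an element of $\{0,1\}^{n+1}\setminus\{0^{n+1}\}$ arises: $(\theta,1)$ (multiply by $Q_{e_{n+1}}$ and use Lemma~\ref{lem:Q_e}), $(\theta,0)$ (subtract the previous case from $Q_\theta$), and $(0^n,1)$ (subtract all the $(\theta,1)$ cases from $Q_{e_{n+1}}$ via Lemma~\ref{lem:Q_e=}). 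If you reorganize your argument as an induction on length along these lines, your telescoping picture goes through; as written, the recursion does not close.
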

\begin{proof}
We proceed by induction on $n$. The case $n=1$ follows from
Lemma \ref{lem:Q_e=} because $Q_{\omega}=Q_{e_1}$ and
$r(\omega) = r(e_1)$ for the only element $\omega=(1)$ of
$\{0,1\}^1\setminus\{0^1\}$.

Fix $n \in \N$, and suppose as an inductive hypothesis that
Equation~\eqref{eq:Q(1-Q)} holds for all elements of
$\{0,1\}^{n}\setminus\{0^{n}\}$. Then for each $\theta \in
\{0,1\}^{n}\setminus\{0^{n}\}$, the inductive hypothesis and
Lemma~\ref{lem:Q'} imply that
\begin{align}
Q_\theta
&= Q'_\theta + \sum_{\substack{v\in r(\theta)\\ |\sigma(v)|<n}}P_v \nonumber \\
&= \bigg(Q'_{(\theta, 0)}+Q'_{(\theta, 1)}
+\sum_{\substack{v\in r(\theta)\\ |\sigma(v)|=n}}P_v \bigg)
+\sum_{\substack{v\in r(\theta)\\ |\sigma(v)|<n}}P_v \nonumber \\
&= Q'_{(\theta, 0)} + Q'_{(\theta, 1)}
+ \sum_{\substack{v\in r(\theta)\\ |\sigma(v)|<n+1}}P_v.\label{eq:Q from Q'}
\end{align}

Now fix $\omega \in \{0,1\}^{n+1} \setminus \{0^{n+1}\}$; we
must establish Equation~\ref{eq:Q(1-Q)} for this $\omega$. We
consider three cases: $\omega = (\theta,1)$ for some $\theta
\in \{0,1\}^n \setminus \{0^n\}$; $\omega = (\theta,0)$ for
some $\theta \in \{0,1\}^n \setminus \{0^n\}$; or $\omega =
(0^n,1)$.

First suppose that $\omega = (\theta,1)$. Then $Q_\omega =
Q_{(\theta, 1)} =Q_\theta Q_{e_{n+1}}$. Combining this with
Lemma~\ref{lem:Q_e} (2)~and~(3) and with~\ref{eq:Q from Q'}, we
obtain
\[
Q_\omega
=Q'_{(\theta, 1)} +
\sum_{\substack{v\in r(\theta)\cap r(e_{n+1})\\ |\sigma(v)|<n+1}}P_v
=Q'_{(\theta, 1)} + \sum_{\substack{v\in r(\theta, 1)\\ |\sigma(v)|<n+1}}P_v.
\]

Now suppose that $\omega = (\theta,0)$. We may apply the
conclusion of the preceding paragraph to $(\theta,1)$ for
calculate
\[
Q_\omega
  = Q_{(\theta, 0)}
  = Q_\theta -Q_{(\theta, 1)}
  = Q'_{(\theta, 0)} + \sum_{\substack{v\in r(\theta)\setminus r(\theta, 1)\\ |\sigma(v)|<n+1}}P_v
  =Q'_{(\theta, 0)}+\sum_{\substack{v\in r(\theta, 0)\\ |\sigma(v)|<n+1}}P_v.
\]

Finally, suppose that $\omega = (0^n,1)$. Then we may apply the
conclusion of the preceding paragraph to each $Q'_{(\theta,1)}$
where $\theta \in \{0,1\}^n \setminus \{0^n\}$ to calculate
\begin{flalign*}
&&Q_\omega
  &= Q_{(0^n,1)} &\\
  &&&=Q_{e_{n+1}}-\sum_{\theta \in \{0,1\}^n\setminus\{0^n\}}Q_{(\theta, 1)}&\\
  &&&=\sum_{\substack{\delta \in \{0,1\}^{n+1}\\ \delta_{n+1}=1}}Q'_{\delta}
      +\sum_{\substack{v\in r(e_{n+1})\\ |\sigma(v)|<n+1}}P_v
      -\sum_{\theta \in \{0,1\}^n\setminus\{0^n\}}
      \bigg(Q'_{(\theta, 1)}+\sum_{\substack{v\in r(\theta, 1)\\ |\sigma(v)|<n+1}}P_v \bigg)&\\
  &&&=\sum_{\theta \in \{0,1\}^n}Q'_{(\theta, 1)}
      +\sum_{\substack{v\in r(e_{n+1})\\ |\sigma(v)|<n+1}}P_v
      -\sum_{\theta \in \{0,1\}^n\setminus\{0^n\}}Q'_{(\theta, 1)}
      -\sum_{\substack{\theta \in \{0,1\}^n\setminus\{0^n\}\\ v\in r(\theta, 1)\\ |\sigma(v)|<n+1}}P_v&\\
  &&&=Q'_{(0^n,1)}+\sum_{\substack{v\in r(0^n,1)\\
  |\sigma(v)|<n+1}}P_v &\qedhere
\end{flalign*}
\end{proof}

\begin{corollary}\label{cor:cond(4)check}
For $\omega\in\{0,1\}^n\setminus\{0^n\}$ with $|r(\omega)|<\infty$,
we have
\[
\prod_{\omega_i=1}Q_{e_i}\prod_{\omega_j=0}(1-Q_{e_j})
=\sum_{v\in r(\omega)}P_v.
\]
\end{corollary}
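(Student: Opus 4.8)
The plan is to observe that the left-hand side is precisely $Q_\omega$ as given in Definition~\ref{dfn:Q_o}, and then to specialise the formula of Lemma~\ref{lem:Q_o} using the hypothesis $|r(\omega)| < \infty$. Writing $n = |\omega|$, Lemma~\ref{lem:Q_o} gives
\[
\prod_{\omega_i=1}Q_{e_i}\prod_{\omega_j=0}(1-Q_{e_j}) = Q_\omega = Q'_\omega + \sum_{\substack{v \in r(\omega)\\ |\sigma(v)| < n}} P_v,
\]
so it remains to check two things under the assumption $|r(\omega)| < \infty$: first, that $Q'_\omega = 0$; and second, that the condition $|\sigma(v)| < n$ in the sum is automatically satisfied by every $v \in r(\omega)$, so that the sum is just $\sum_{v \in r(\omega)} P_v$.

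For the first point, since $|r(\omega)| < \infty$ we have $\omega \notin \Delta_n$, hence $\omega \notin \Delta$; by Definition~\ref{dfn:Q'} this forces $Q'_\omega = 0$. For the second point, recall that $r'(\omega) = \{v \in r(\omega) : |\sigma(v)| \geq n\}$ by definition, and that Lemma~\ref{lem:r'empty} tells us $r'(\omega) = \emptyset$ whenever $\omega \notin \Delta$. Thus no $v \in r(\omega)$ satisfies $|\sigma(v)| \geq n$, i.e.\ every $v \in r(\omega)$ has $|\sigma(v)| < n = |\omega|$, and therefore
\[
\sum_{\substack{v \in r(\omega)\\ |\sigma(v)| < |\omega|}} P_v = \sum_{v \in r(\omega)} P_v.
\]
Combining these two observations with the displayed identity yields the claim.

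There is no real obstacle here: the corollary is a direct consequence of Lemma~\ref{lem:Q_o}, once one unwinds the definitions of $\Delta$, $Q'_\omega$, and $r'(\omega)$, together with Lemma~\ref{lem:r'empty}. The only point requiring a moment's care is recognising that the two "extra" contributions in Lemma~\ref{lem:Q_o}'s formula both collapse appropriately precisely because $\omega \notin \Delta$, which is exactly the content of the finiteness hypothesis on $r(\omega)$.
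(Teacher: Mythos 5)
Your proposal is correct and follows exactly the paper's own argument: apply Lemma~\ref{lem:Q_o}, note that $|r(\omega)|<\infty$ forces $\omega\notin\Delta$ so that $Q'_\omega=0$ by Definition~\ref{dfn:Q'}, and use Lemma~\ref{lem:r'empty} to see that every $v\in r(\omega)$ satisfies $|\sigma(v)|<|\omega|$. No issues.
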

\begin{proof}
Take $\omega\in\{0,1\}^n\setminus\{0^n\}$ with
$|r(\omega)|<\infty$. Then $\omega\notin \Delta$.
Hence
Lemma~\ref{lem:r'empty} and the definition of $r'(\omega)$ imply
that $|\sigma(v)|<|\omega|$ for all $v\in r(\omega)$, and by
definition, $Q'_\omega = 0$.
Thus the conclusion follows from Lemma~\ref{lem:Q_o}.
\end{proof}

\begin{lemma}\label{lem:SeSe*}
For $e_n\in\G^1$,
we have
\[
S_{e_n}S_{e_n}^*=U_{s(e_n)}\bigg(
\sum_{x\in \Xset{n}}t_{\edge(n,x)}t_{\edge(n,x)}^*\bigg)U_{s(e_n)}^*.
\]
\end{lemma}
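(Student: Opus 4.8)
The plan is to expand $S_{e_n}S_{e_n}^*$ directly from Definition~\ref{dfn:G-family in E} and then collapse the resulting double sum. First I would take the adjoint of $S_{e_n} = U_{s(e_n)}\sum_{x\in\Xset{n}}t_{\edge(n,x)}U_x^*$ and multiply, obtaining
\[
S_{e_n}S_{e_n}^*
= U_{s(e_n)}\Bigl(\sum_{x\in\Xset{n}}t_{\edge(n,x)}U_x^*\Bigr)\Bigl(\sum_{y\in\Xset{n}}U_y t_{\edge(n,y)}^*\Bigr)U_{s(e_n)}^*
= U_{s(e_n)}\Bigl(\sum_{x,y\in\Xset{n}}t_{\edge(n,x)}U_x^*U_y t_{\edge(n,y)}^*\Bigr)U_{s(e_n)}^*.
\]
This is a finite sum, since $\Xset{n}$ is finite by Lemma~\ref{lem:Wnfin}, so there are no convergence issues and the manipulation is purely algebraic.

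The key step is that the off-diagonal terms vanish. By the last assertion of Lemma~\ref{lem:uxux*uyuy*}, $U_x^*U_y = 0$ whenever $x,y\in\Xset{n}$ with $x\neq y$, so only the terms with $x=y$ survive. For those, I would combine the identity $U_x^*U_x = q_x$ (recorded just after Definition~\ref{dfn:Ux}) with the Cuntz--Krieger relation $t_{\edge(n,x)}^*t_{\edge(n,x)} = q_{r_E(\edge(n,x))} = q_x$, giving
\[
t_{\edge(n,x)}U_x^*U_x t_{\edge(n,x)}^*
= t_{\edge(n,x)}\, t_{\edge(n,x)}^* t_{\edge(n,x)}\, t_{\edge(n,x)}^*
= t_{\edge(n,x)}t_{\edge(n,x)}^*,
\]
the last equality because $t_{\edge(n,x)}$ is a partial isometry. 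Substituting this back collapses the double sum to $\sum_{x\in\Xset{n}}t_{\edge(n,x)}t_{\edge(n,x)}^*$, which yields the claimed formula after conjugating by $U_{s(e_n)}$.

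I do not expect a genuine obstacle here: the only point requiring care is the vanishing of the cross terms $U_x^*U_y$ for distinct $x,y\in\Xset{n}$, and this is exactly the content already extracted in Lemma~\ref{lem:uxux*uyuy*} (coming from the fact that there are no paths in $F$ between distinct vertices of $\Xset{n}$, which in turn uses Lemma~\ref{lem:r(e_n) and X_n} and Lemma~\ref{lem:paths in F}). Thus the proof is a short bookkeeping computation, essentially parallel to the computation of $Q_{e_n} = S_{e_n}^*S_{e_n}$ carried out in Lemma~\ref{lem:Q_e=}.
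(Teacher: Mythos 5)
Your proposal is correct and follows essentially the same route as the paper: the paper's (very terse) proof likewise expands $S_{e_n}S_{e_n}^*$ from Definition~\ref{dfn:G-family in E} and invokes the mutual orthogonality of the range projections of the $U_x$ for $x\in\Xset{n}$ (equivalently, the last assertion $U_x^*U_y=0$ of Lemma~\ref{lem:uxux*uyuy*}) to kill the cross terms. Your explicit treatment of the diagonal terms via $U_x^*U_x = q_x = t_{\edge(n,x)}^*t_{\edge(n,x)}$ is exactly the bookkeeping the paper leaves implicit.
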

\begin{proof}
Lemma~\ref{lem:uxux*uyuy*} shows that the $U_x$, $x \in
\Xset{n}$ have mutually orthogonal range projections, and the
result then follows from the definition of $S_{e_n}$.
\end{proof}

\begin{lemma}\label{lem:SeSe*2}
For each $v\in G^0$,
\[
\bigg\{\sum_{x\in \Xset{n}}t_{\edge(n,x)}t_{\edge(n,x)}^* : n \in \N, s(e_n)=v\bigg\}
\]
is a collection of pairwise orthogonal projections dominated by
$q_v$. Moreover, $G^0_{\rg} \subset E^0_{\rg}$, and if $v\in
G^0_{\rg}$ then
\[
\sum_{\{n \in \N \,:\, s(e_n) = v\}} \Big(\sum_{x\in \Xset{n}}t_{\edge(n,x)}t_{\edge(n,x)}^*\Big) = q_v.
\]
\end{lemma}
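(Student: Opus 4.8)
The plan is to work directly with the edges in $E$ emanating from a fixed vertex $v \in G^0$ and to apply the Cuntz--Krieger relations in $C^*(E)$. First I would recall from the proof of Proposition~\ref{prp:reg verts} that $s_E^{-1}(v) = \bigsqcup_{\{n \,:\, s(e_n) = v\}} \{\edge(n,x) : x \in \Xset{n}\}$, so that the edges of $E$ with source $v$ are exactly the $\edge(n,x)$ with $s(e_n) = v$ and $x \in \Xset{n}$. For each such $n$, the projections $t_{\edge(n,x)}t_{\edge(n,x)}^*$ with $x \in \Xset{n}$ are pairwise orthogonal (since the $t_\alpha$, $\alpha \in E^1$, have mutually orthogonal ranges in a graph algebra) and each is dominated by $q_{s_E(\edge(n,x))} = q_{s(e_n)} = q_v$ by Definition~\ref{def:graph algebra}(2). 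Summing over $x \in \Xset{n}$ (a finite set, by Lemma~\ref{lem:Wnfin}) gives a projection $\sum_{x\in \Xset{n}}t_{\edge(n,x)}t_{\edge(n,x)}^* \le q_v$. For distinct $n$ with $s(e_n) = v$, orthogonality of these sums again follows from the mutual orthogonality of the ranges of the $t_\alpha$, since the edges $\edge(n,x)$ and $\edge(n',x')$ are distinct when $n \ne n'$. This establishes the first assertion.

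For the inclusion $G^0_{\rg} \subset E^0_{\rg}$, I would invoke Proposition~\ref{prp:reg verts}, which states $E^0_{\rg} = G^0_{\rg} \sqcup \Delta$; in particular $G^0_{\rg} \subset E^0_{\rg}$ is immediate. Alternatively, one can argue directly: if $v \in G^0_{\rg}$ then $s^{-1}(v) \subset \G^1$ is finite and nonempty, so only finitely many $e_n$ have $s(e_n) = v$, and for each such $n$ the set $\Xset{n}$ is finite by Lemma~\ref{lem:Wnfin}, whence $s_E^{-1}(v)$ is finite; it is nonempty because $s^{-1}(v) \ne \emptyset$ and each $\Xset{n} \ne \emptyset$.

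For the final equation, suppose $v \in G^0_{\rg}$. Since $v \in E^0_{\rg}$, the Cuntz--Krieger relation Definition~\ref{def:graph algebra}(3) gives $q_v = \sum_{s_E(\alpha) = v} t_\alpha t_\alpha^*$. Using the description $s_E^{-1}(v) = \bigsqcup_{\{n \,:\, s(e_n) = v\}} \{\edge(n,x) : x \in \Xset{n}\}$ to reindex this (finite) sum, we obtain $q_v = \sum_{\{n \,:\, s(e_n) = v\}} \big(\sum_{x \in \Xset{n}} t_{\edge(n,x)} t_{\edge(n,x)}^*\big)$, which is precisely the claimed identity. This step is essentially a bookkeeping exercise: the only subtlety worth flagging is that one must be sure the rewriting of $s_E^{-1}(v)$ is indeed a genuine disjoint union and that the relevant index sets are finite, both of which are guaranteed by $v \in G^0_{\rg}$ together with Lemma~\ref{lem:Wnfin}. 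I do not anticipate any real obstacle here; the proof is a straightforward application of the graph-algebra relations combined with the combinatorial description of $s_E^{-1}(v)$ already recorded in Proposition~\ref{prp:reg verts}.
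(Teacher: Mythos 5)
Your proof is correct and follows the same route as the paper: the paper's proof simply cites Proposition~\ref{prp:reg verts} for $G^0_{\rg}\subset E^0_{\rg}$ and the decomposition $s_E^{-1}(v)=\bigsqcup_{s(e_n)=v}\{\edge(n,x):x\in\Xset{n}\}$, and then derives both equations from the Cuntz--Krieger relations, exactly as you do. Your write-up just spells out the details the paper leaves implicit.
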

\begin{proof}
Proposition~\ref{prp:reg verts} shows that
$G^0_{\rg} \subset E^0_\rg$
and both equations then follow from the Cuntz-Krieger relations in $C^*(E)$.
\end{proof}

\begin{proposition}\label{prop:ELGfam}
The collection $\{P_v : v\in G^0\}$ and $\{S_{e_n} :
e_n\in\G^1\}$ is an Exel-Laca $\G$-family in $C^*(E)$.
\end{proposition}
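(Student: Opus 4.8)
The plan is to verify, one at a time, the three conditions in the definition of an Exel--Laca $\G$-family (Definition~\ref{dfn:EL-G-fam}) for $\{P_v : v\in G^0\}$ and $\{S_{e_n} : e_n\in\G^1\}$, feeding in the computational lemmas already established in this section. It is convenient to abbreviate $R_n := \sum_{x\in\Xset{n}}t_{\edge(n,x)}t_{\edge(n,x)}^*$, so that Lemma~\ref{lem:SeSe*} reads $S_{e_n}S_{e_n}^* = U_{s(e_n)}R_nU_{s(e_n)}^*$; recall also $U_x^*U_x = q_x$ from the remark following Definition~\ref{dfn:Ux}, and $P_v = U_vU_v^*$. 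Since $s_E(\edge(n,x)) = s(e_n)$ and distinct edges of $E$ have orthogonal range projections in $C^*(E)$, the first assertion of Lemma~\ref{lem:SeSe*2} gives $R_n \le q_{s(e_n)} = U_{s(e_n)}^*U_{s(e_n)}$.

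I would first handle condition~(2), that $S_{e_n}S_{e_n}^* \le P_{s(e_n)}$: since $R_n \le U_{s(e_n)}^*U_{s(e_n)}$, conjugating by $U_{s(e_n)}$ yields $S_{e_n}S_{e_n}^* = U_{s(e_n)}R_nU_{s(e_n)}^* \le U_{s(e_n)}U_{s(e_n)}^*U_{s(e_n)}U_{s(e_n)}^* = U_{s(e_n)}U_{s(e_n)}^* = P_{s(e_n)}$. The mutual orthogonality of the final projections $\{S_{e_n}S_{e_n}^*\}$ then splits, for $n\ne m$, into two cases: if $s(e_n)\ne s(e_m)$ it follows from $S_{e_n}S_{e_n}^* \le P_{s(e_n)} \perp P_{s(e_m)} \ge S_{e_m}S_{e_m}^*$ by Lemma~\ref{lem:P_vQ'_o}(1); if $s(e_n) = s(e_m) =: v$, then using $R_n, R_m \le q_v = U_v^*U_v$ and $R_nR_m = 0$ (Lemma~\ref{lem:SeSe*2}) we get $(S_{e_n}S_{e_n}^*)(S_{e_m}S_{e_m}^*) = U_vR_n(U_v^*U_v)R_mU_v^* = U_vR_nR_mU_v^* = 0$.

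For condition~(1) I must show that $\{P_v\}\cup\{Q_{e_n}\}$ satisfies Condition~(EL). Parts~(1),~(2),~(3) of Definition~\ref{dfn:Cond(EL)} are exactly Lemma~\ref{lem:P_vQ'_o}(1) and Lemma~\ref{lem:Q_e}(1)--(2). For part~(4), given finite disjoint $\lambda,\mu\subset\G^1$ with $\lambda\ne\emptyset$ and $r(\lambda,\mu)$ finite, I would set $N := \max\{i : e_i\in\lambda\cup\mu\}$, identify each edge with its index, and put $L := \lambda$, $M := \mu$, $R := \{1,\dots,N\}\setminus(L\cup M)$. Because the $Q_{e_i}$ commute (Lemma~\ref{lem:Q_e}(1)), multiplying $\prod_{e\in\lambda}Q_e\prod_{f\in\mu}(1-Q_f)$ by $\prod_{k\in R}\big(Q_{e_k}+(1-Q_{e_k})\big)=1$ and expanding gives
\[
\prod_{e\in\lambda}Q_e\prod_{f\in\mu}(1-Q_f)
= \sum_{T\subseteq R}\ \prod_{i\in L\cup T}Q_{e_i}\prod_{j\in\{1,\dots,N\}\setminus(L\cup T)}(1-Q_{e_j}),
\]
where the $T$-summand equals $\prod_{\omega^{T}_i=1}Q_{e_i}\prod_{\omega^{T}_j=0}(1-Q_{e_j})$ for the word $\omega^{T}\in\{0,1\}^{N}\setminus\{0^{N}\}$ determined by $\{i : \omega^{T}_i = 1\} = L\cup T$ (note $\omega^T\ne 0^N$ since $L\ne\emptyset$). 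A direct set-theoretic check shows $r(\lambda,\mu) = \bigsqcup_{T\subseteq R}r(\omega^{T})$, so each $r(\omega^{T})$ is finite; Corollary~\ref{cor:cond(4)check} then identifies the $T$-summand with $\sum_{v\in r(\omega^{T})}P_v$, and summing over $T\subseteq R$ produces $\sum_{v\in r(\lambda,\mu)}P_v$, as required by Condition~(EL)(4).

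Finally, for condition~(3), fix $v\in G^0_{\rg}$. By Lemma~\ref{lem:SeSe*} the finite sum $\sum_{s(e_n)=v}S_{e_n}S_{e_n}^*$ equals $U_v\big(\sum_{\{n\,:\,s(e_n)=v\}}R_n\big)U_v^*$, and $\sum_{\{n\,:\,s(e_n)=v\}}R_n = q_v = U_v^*U_v$ by Lemma~\ref{lem:SeSe*2}, so this equals $U_vU_v^*U_vU_v^* = U_vU_v^* = P_v$. Together with the facts that $P_v$ is a projection and $S_{e_n}$ is a partial isometry (both noted after Definition~\ref{dfn:G-family in E}), this finishes the verification. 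I expect the only step needing genuine care to be Condition~(EL)(4): one has to expand over the ``missing'' generators $Q_{e_k}$, $k\in R$, and carry out the routine verification that $r(\lambda,\mu)$ is the disjoint union of the $r(\omega^{T})$ before invoking Corollary~\ref{cor:cond(4)check} --- this is the standard Exel--Laca-style manipulation, and everything else is bookkeeping built directly on the lemmas above.
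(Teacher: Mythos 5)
Your proof is correct and follows essentially the same route as the paper: Conditions (1)--(3) of Condition~(EL) come from Lemma~\ref{lem:P_vQ'_o}(1) and Lemma~\ref{lem:Q_e}(1)--(2), Condition~(EL)(4) reduces to Corollary~\ref{cor:cond(4)check}, and conditions (2)--(3) of Definition~\ref{dfn:EL-G-fam} together with the orthogonality of the final projections come from Lemmas~\ref{lem:SeSe*} and~\ref{lem:SeSe*2}. The only difference is that where the paper invokes \cite[Corollary~2.18]{KMST} to reduce Condition~(EL)(4) to the case $\lambda\cup\mu=\{e_1,\dots,e_N\}$, you carry out that reduction explicitly by inserting $\prod_{k\in R}\bigl(Q_{e_k}+(1-Q_{e_k})\bigr)$ and checking $r(\lambda,\mu)=\bigsqcup_{T\subseteq R}r(\omega^T)$, which is a correct and self-contained substitute for the citation.
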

\begin{proof}
By Lemma \ref{lem:P_vQ'_o}~(1) and Lemma
\ref{lem:Q_e}~(1)~and~(2), the collection $\{P_v : v\in G^0\}$
and $\{Q_{e_n} : e_n\in\G^1\}$ satisfies the conditions (1),
(2), and (3) of Definition~\ref{dfn:Cond(EL)}.  It follows from \cite[Corollary~2.18]{KMST} that to establish the $\{P_v\}$ and the $\{Q_{e_n}\}$ satisfy Condition~(EL), it suffices to verify Condition~(4) of Definition~\ref{dfn:Cond(EL)} when $\lambda \cup \mu = \{e_1, \dots, e_n\}$ for some $n$, and this follows from Corollary~\ref{cor:cond(4)check}. The conditions (2) and (3) in Definition
\ref{dfn:EL-G-fam} and the fact that the elements of $\{S_{e_n}
: e_n\in\G^1\}$ have mutually orthogonal ranges follow from
Lemma~\ref{lem:SeSe*} and Lemma~\ref{lem:SeSe*2}.
\end{proof}

\begin{proposition}\label{prop:phi}
There is a strongly continuous action $\beta$ of $\T$ on
$C^*(E)$ satisfying
\begin{itemize}
\item $\beta_z(q_x)=q_x$ for $x\in E^0$,
\item $\beta_z(t_{\overline{x}})=t_{\overline{x}}$ for
  $x\in W_+\sqcup\Gamma_+$, and
\item $\beta_z(t_{\edge(n,x)})=zt_{\edge(n,x)}$ for
    $e_n\in\G^1, x\in \Xset{n}$.
\end{itemize}
Moreover, there is an injective homomorphism $\phi\colon
  C^*(\G) \to C^*(E)$ such that $\phi(p_v)=P_v$ and
  $\phi(s_e)=S_e$, and $\phi$ is equivariant for $\beta$
  and the gauge action on $C^*(E)$.
\end{proposition}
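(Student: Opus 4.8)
The plan is to build the action $\beta$ first, then invoke the universal property of $C^*(\G)$ to produce $\phi$, and finally use a gauge-invariant uniqueness theorem to prove injectivity. For the action $\beta$: the prescription on $q_x$, $t_{\overline{x}}$ and $t_{\edge(n,x)}$ defines, for each fixed $z \in \T$, a new Cuntz-Krieger $E$-family in $C^*(E)$ (one checks the three relations of Definition~\ref{def:graph algebra} are preserved, which is immediate since each generator is only rescaled by a unimodular scalar and the relations are homogeneous of degree zero or one in the $t$'s), so by universality there is a $*$-homomorphism $\beta_z \colon C^*(E) \to C^*(E)$; these compose correctly, $\beta_1 = \mathrm{id}$, and strong continuity follows from continuity on generators together with a standard $\varepsilon/3$ argument, so $\beta$ is a strongly continuous action of $\T$.

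For the homomorphism $\phi$: by Proposition~\ref{prop:ELGfam}, $\{P_v : v \in G^0\} \cup \{S_{e_n} : e_n \in \G^1\}$ is an Exel-Laca $\G$-family in $C^*(E)$. The ultragraph algebra $C^*(\G)$ is (by \cite{KMST}) the universal \Ca generated by such a family, so there is a $*$-homomorphism $\phi \colon C^*(\G) \to C^*(E)$ with $\phi(p_v) = P_v$ and $\phi(s_e) = S_e$. For equivariance, it suffices to check $\phi \circ \gamma_z = \beta_z \circ \phi$ on the generators $p_v$ and $s_e$, where $\gamma$ is the gauge action on $C^*(\G)$: since $P_v = U_v U_v^*$ is a product of $t_{\overline{x}}$'s and their adjoints, $\beta_z(P_v) = P_v = \phi(\gamma_z(p_v))$; and from $S_{e_n} = U_{s(e_n)} \sum_{x} t_{\edge(n,x)} U_x^*$ we get $\beta_z(S_{e_n}) = z S_{e_n} = \phi(z s_{e_n}) = \phi(\gamma_z(s_{e_n}))$, using that exactly one factor of the form $t_{\edge(n,x)}$ appears.

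For injectivity I would apply the gauge-invariant uniqueness theorem for ultragraph algebras (\cite[Theorem~6.8]{KMST}, or the version in \cite{Tom}): since $\phi$ intertwines $\gamma$ and $\beta$, it suffices to show that $\phi(p_A) \neq 0$ for every nonempty $A \in \G^0$ and, for the gauge-invariant uniqueness theorem's hypothesis, that $\phi$ is nonzero on the appropriate diagonal projections. Concretely, one checks $P_v = U_v U_v^* \neq 0$ for each $v \in G^0$ (true since $U_v^* U_v = q_v \neq 0$ in $C^*(E)$, whose vertex projections are all nonzero), and then uses the representation $\{p_A\}$ of $\G^0$ together with Lemma~\ref{lem:Q_o}/Corollary~\ref{cor:cond(4)check} — which identify $\phi(p_{r(e_n)}) = Q_{e_n}$ and more generally compute $\phi(p_A)$ as an honest sum of distinct nonzero $P_v$'s and $Q'_\omega$'s — to conclude $\phi(p_A) \neq 0$ whenever $A \neq \emptyset$. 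The main obstacle is this last point: verifying that the faithfulness hypothesis of the gauge-invariant uniqueness theorem is genuinely met, i.e. that no nonzero element of the fixed-point-algebra core is killed, which amounts to showing the projections $\phi(p_A)$ detect all of $\G^0$; this requires carefully tracking, via the lemmas of Sections~\ref{Path-sec}--\ref{corner-sec}, that the $P_v$ and $Q'_\omega$ landing in $C^*(E)$ are linearly independent in the right sense and nonzero. Everything else is a routine check on generators.
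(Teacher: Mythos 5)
Your proposal is correct and follows essentially the same route as the paper: construct $\beta$ from the universal property of $C^*(E)$, obtain $\phi$ from Proposition~\ref{prop:ELGfam} together with the universality of $C^*(\G)$ for Exel-Laca $\G$-families, verify equivariance on generators (the key observation being that $\beta_z$ fixes every $t_\alpha$ with $\alpha\in F^*$, hence every $U_x$), and invoke the gauge-invariant uniqueness theorem of \cite{KMST}. The one place you overestimate the difficulty is the faithfulness hypothesis of that theorem: it requires only that $\phi(p_A)\neq 0$ for every nonempty $A\in\G^0$, and since any such $A$ contains a vertex $v$ with $p_v\le p_A$, this reduces to $P_v=U_vU_v^*\neq 0$, which is immediate from $U_v^*U_v=q_v\neq 0$ --- no linear-independence bookkeeping for the $Q'_\omega$ is needed.
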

\begin{proof}
The existence of $\beta$ follows from a standard argument using
the universal property of $C^*(E)$.

The first statement of \cite[Corollary~3.5]{KMST} implies that
$C^*(\G)$ is universal for Exel-Laca $\G$-families. Hence there
is a homomorphism $\phi : C^*(\G) \to C^*(E)$ such that
$\phi(p_v)=P_v$ and $\phi(s_e)=S_e$. To prove that $\phi$ is
injective, we first show that $\phi$ is equivariant for $\beta$
and the gauge action on $C^*(E)$, and then apply the
gauge-invariant uniqueness theorem for ultragraphs as stated in
\cite[Corollary~3.5]{KMST}.

It suffices to show that $\beta_z(P_v)=P_v$ and
$\beta_z(S_e)=zS_e$ for $v\in G^0$, $e\in \G^1$ and $z \in \T$.
Each $\beta_z$ fixes $t_\alpha$ for every $\alpha \in F^*$, and
hence fixes the partial isometries $U_x$ of
Definition~\ref{dfn:Ux}. Hence $\beta$ has the desired
properties by definition of the $S_e$ and $P_v$.
\end{proof}

The defining properties of the homomorphism $\phi : C^*(\G) \to C^*(E)$ of the preceding proposition imply that
\[
\phi(p_{r(e_n)}) = \phi(s^*_{e_n} s_{e_n}) =
S_{e_n}^* S_{e_n} = Q_{e_n}
\]
for all $n$, so for all $\omega \in \{0,1\}^n \setminus \{0^n\}$ we have
\[
\phi(p_{r(\omega)})
  = \phi\Big(\prod_{\omega_i = 1} p_{r(e_i)} \prod_{\omega_j = 0} (1 - p_{r(e_j)})\Big)
  = \prod_{\omega_i = 1} Q_{e_i} \prod_{\omega_j = 0} (1 - Q_{e_j})
  = Q_{\omega}.
\]

The following proposition shows that the sets $r'(\omega)$ of
the preceding section and the projections $Q'_\omega$ discussed
in this section satisfy a similar relationship (also compare
Lemma~\ref{lem:r(e_n) and X_n} with Lemma~\ref{lem:Q_e=}, and
Remark~\ref{rem:r'=r'} with Lemma~\ref{lem:Q'}).

\begin{proposition}\label{prop:r'(o)}
For $\omega \in \bigsqcup_{n=1}^\infty (\{0,1\}^n \setminus
\{0^n\})$, the set $r'(\omega)$ is in $\G^0$, and we have
$\phi(p_{r'(\omega)}) = Q'_{\omega}$.
\end{proposition}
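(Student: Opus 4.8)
The plan is to read off the result from Lemma~\ref{lem:Q_o} together with the identity $\phi(p_{r(\omega)}) = Q_\omega$ established in the discussion immediately preceding the proposition, after first checking that $r'(\omega)$ lies in the algebra $\G^0$ (so that $p_{r'(\omega)}$ even makes sense).

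First I would verify that $r'(\omega) \in \G^0$. Write $n = |\omega|$ and put $F_\omega := \{v \in r(\omega) : |\sigma(v)| < n\}$, so that $r(\omega) = r'(\omega) \sqcup F_\omega$. The set $r(\omega) = \bigcap_{\omega_i = 1} r(e_i) \setminus \bigcup_{\omega_j = 0} r(e_j)$ belongs to $\G^0$ by the construction of $\G^0$. The set $F_\omega$ is finite: if $\omega \notin \Delta$ then $r(\omega)$ itself is finite by definition of $\Delta$; if $\omega \in \Delta$, then $r(\omega) \subseteq W_+$, so no $v \in r(\omega)$ has $|\sigma(v)| = 0$, whence $F_\omega \subseteq \bigcup\{\sigma^{-1}(\omega') : \omega' \in \Delta,\ |\omega'| < n\}$, a finite union of finite sets by Lemma~\ref{lem:W_+ -> Delta^0}. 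Since every finite subset of $G^0$ lies in $\G^0$ and $\G^0$ is closed under relative complements, $r'(\omega) = r(\omega) \setminus F_\omega \in \G^0$.

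Next, since $\{p_A : A \in \G^0\}$ is a representation of $\G^0$ and $F_\omega$ is finite, the disjoint decomposition $r(\omega) = r'(\omega) \sqcup F_\omega$ gives $p_{r(\omega)} = p_{r'(\omega)} + \sum_{v \in F_\omega} p_v$ (using $p_\emptyset = 0$ and additivity on disjoint sets). Applying $\phi$ and using $\phi(p_{r(\omega)}) = Q_\omega$ and $\phi(p_v) = P_v$ yields $Q_\omega = \phi(p_{r'(\omega)}) + \sum_{v \in F_\omega} P_v$. On the other hand, Lemma~\ref{lem:Q_o} gives $Q_\omega = Q'_\omega + \sum_{v \in r(\omega),\ |\sigma(v)| < |\omega|} P_v = Q'_\omega + \sum_{v \in F_\omega} P_v$. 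Cancelling the common summand $\sum_{v \in F_\omega} P_v$ yields $\phi(p_{r'(\omega)}) = Q'_\omega$, as required.

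I do not expect a serious obstacle: once Lemma~\ref{lem:Q_o} is available, the only mild subtlety is the two-case finiteness argument for $F_\omega$, which is what is needed to guarantee $r'(\omega) \in \G^0$; after that the identity is pure bookkeeping with the representation $\{p_A\}$ and the homomorphism $\phi$. As a sanity check, when $\omega \notin \Delta$ both sides collapse to $0$, since then $r'(\omega) = \emptyset$ by Lemma~\ref{lem:r'empty} and $Q'_\omega = 0$ by definition.
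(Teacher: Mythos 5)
Your proof is correct and follows essentially the same route as the paper: the paper's (much terser) proof likewise obtains membership of $r'(\omega)$ in $\G^0$ from its definition and deduces the identity by combining $\phi(p_{r(\omega)})=Q_\omega$ with Lemma~\ref{lem:Q_o} via the decomposition $r(\omega)=r'(\omega)\sqcup\{v\in r(\omega):|\sigma(v)|<|\omega|\}$. Your two-case finiteness check for that complementary set, and the cancellation of $\sum_{v}P_v$, are exactly the details the paper leaves implicit.
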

\begin{proof}
The set $r'(\omega)$ belongs to $\G^0$ by the definitions of
$r'(\omega)$ and the algebra $\G^0$. That $\phi(p_{r'(\omega)})
= Q'_{\omega}$ follows from the definition of $r'(\omega)$ and
Lemma~\ref{lem:Q_o}.
\end{proof}

We next determine the image of the injection $\phi$ of
Proposition~\ref{prop:phi}.

\begin{lemma}\label{lem:UU^*inImage}
For all $x\in E^0$,
we have $U_xU_x^*\in\phi(C^*(\G))$.
\end{lemma}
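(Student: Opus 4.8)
The plan is to exploit the decomposition $E^0 = G^0 \sqcup \Delta$ from Definition~\ref{dfn:E} and to treat the two cases $x \in G^0$ and $x \in \Delta$ separately, invoking in each case an identity for $\phi$ that has already been established. No real computation is needed; the work is all in recognizing $U_x U_x^*$ as the image under $\phi$ of a suitable element of $C^*(\G)$.

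First, suppose $x = v \in G^0$. Here I would simply observe that Definition~\ref{dfn:G-family in E} gives $P_v = U_v U_v^*$, while Proposition~\ref{prop:phi} gives $\phi(p_v) = P_v$; hence $U_v U_v^* = \phi(p_v) \in \phi(C^*(\G))$. Next, suppose $x = \omega \in \Delta$. In this case Definition~\ref{dfn:Q'} gives $Q'_\omega = U_\omega U_\omega^*$, and Proposition~\ref{prop:r'(o)} gives $\phi(p_{r'(\omega)}) = Q'_\omega$ (recall that $r'(\omega) \in \G^0$, so $p_{r'(\omega)}$ makes sense as an element of $C^*(\G)$). Thus $U_\omega U_\omega^* = \phi(p_{r'(\omega)}) \in \phi(C^*(\G))$. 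Since $E^0 = G^0 \sqcup \Delta$, these two cases exhaust all $x \in E^0$, and the lemma follows.

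The only point to watch — rather than a genuine obstacle — is to confirm that the two cases really do cover $E^0$ and that the cited identities ($P_v = U_v U_v^*$ for $v \in G^0$, coming from the definition of the Exel--Laca $\G$-family, and $Q'_\omega = U_\omega U_\omega^*$ for $\omega \in \Delta$, coming from Definition~\ref{dfn:Q'}) are invoked correctly; once those are in hand the statement is immediate. It is natural that the proof reduces to this, since the role of this lemma in Section~\ref{corner-sec} is to establish the containment $P C^*(E) P \subseteq \phi(C^*(\G))$ for the corner projection $P$, and for that one precisely needs each $U_x U_x^*$ to lie in the image of $\phi$.
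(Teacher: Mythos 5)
Your proof is correct and follows essentially the same route as the paper: both split on $E^0 = G^0 \sqcup \Delta$, handle $x = v \in G^0$ via $U_vU_v^* = P_v = \phi(p_v)$, and handle $x = \omega \in \Delta$ by recognizing $U_\omega U_\omega^*$ as an element of $\phi(C^*(\G))$. The only cosmetic difference is that the paper writes $U_\omega U_\omega^* = \prod_{\omega_i=1}Q_{e_i}\prod_{\omega_j=0}(1-Q_{e_j}) - \sum_{v\in r(\omega),\, |\sigma(v)|<|\omega|}P_v$ directly from Lemma~\ref{lem:Q_o}, whereas you cite Proposition~\ref{prop:r'(o)} (which packages exactly that computation as $\phi(p_{r'(\omega)}) = Q'_\omega$), so the two arguments are equivalent.
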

\begin{proof}
For $x = v\in G^0$, we have $U_vU_v^*=P_v\in\phi(C^*(\G))$. For
$x = \omega\in \Delta$, we have
\[
U_\omega U_\omega^*
=\prod_{\omega_i=1}Q_{e_i}\prod_{\omega_j=0}(1-Q_{e_j})
-\sum_{\substack{v\in r(\omega)\\ |\sigma(v)|<|\omega|}}P_v \in \phi(C^*(\G))
\]
by Lemma \ref{lem:Q_o}.
\end{proof}

\begin{lemma}\label{lem:tg}
Let $\alpha \in E^*$, and suppose $s_E(\alpha)\in
W_0\sqcup\Gamma_0$. Let
\[
\alpha=
g_0 \cdot \edge(n_1,x_1)\cdot g_1\cdot
\edge(n_2,x_2)\cdot g_{2}\cdots \edge(n_k,x_k)\cdot g_{k}
\]
be the unique expression for $\alpha$ such that each
$e_{n_i}\in\G^1$, $x_i\in \Xset{n_i}$, and $g_i\in F^*$ as in
Lemma~\ref{lem:unique expression}. Then
\[
t_\alpha=S_{e_{n_1}}S_{e_{n_2}}\cdots S_{e_{n_k}}U_{r_E(\alpha)}.
\]
\end{lemma}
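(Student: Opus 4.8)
The plan is to prove the identity $t_\alpha = S_{e_{n_1}} S_{e_{n_2}} \cdots S_{e_{n_k}} U_{r_E(\alpha)}$ by induction on $k$, the number of edges of the form $\edge(n_i,x_i)$ appearing in the canonical decomposition of $\alpha$ given by Lemma~\ref{lem:unique expression}. The base case $k=0$ says that for a path $\alpha = g_0 \in F^*$ with $s_E(\alpha) \in W_0 \sqcup \Gamma_0$, we have $t_\alpha = U_{r_E(\alpha)}$; this is immediate from the uniqueness part of Lemma~\ref{lem:paths in F} and the definition of $f_x$ in Definition~\ref{dfn:f-paths}, since $g_0$ and $f_{r_E(g_0)}$ are both paths in $F^*$ with the same source (in $W_0 \sqcup \Gamma_0$) and same range, hence equal, so $t_{g_0} = t_{f_{r_E(\alpha)}} = U_{r_E(\alpha)}$.

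For the inductive step, suppose the result holds for paths with fewer than $k$ edges of the special form, and let $\alpha$ be as in the statement with $k \ge 1$. Write $\alpha = \alpha'' \cdot \edge(n_k, x_k) \cdot g_k$ where $\alpha'' = g_0 \cdot \edge(n_1,x_1)\cdots g_{k-1}$, so that $t_\alpha = t_{\alpha''}\, t_{\edge(n_k,x_k)}\, t_{g_k}$. Since $r_E(\alpha'') = s_E(\edge(n_k,x_k)) = s(e_{n_k}) \in G^0$ and $s_E(\alpha'') \in W_0 \sqcup \Gamma_0$, the inductive hypothesis gives $t_{\alpha''} = S_{e_{n_1}} \cdots S_{e_{n_{k-1}}} U_{s(e_{n_k})}$. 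Next, since $r_E(\edge(n_k,x_k)) = x_k$ and $s_E(g_k) = x_k$ with $s_E(\alpha) = s_E(g_0) \in W_0 \sqcup \Gamma_0$, the subpath $g_k$ is a path in $F^*$ from $x_k$ to $r_E(\alpha)$; by the same uniqueness argument as in the base case (or by the factorization $f_{r_E(\alpha)} = f_{x_k} \cdot g_k$ when $s_E(f_{x_k}) \in W_0 \sqcup \Gamma_0$), we get $t_{\edge(n_k,x_k)}\, t_{g_k} = t_{\edge(n_k,x_k)}\, U_{x_k}^* U_{x_k}\, t_{g_k} = t_{\edge(n_k,x_k)} U_{x_k}^* U_{r_E(\alpha)}$, using $U_{x_k}^* U_{x_k} = q_{x_k} = q_{r_E(\edge(n_k,x_k))} = t_{\edge(n_k,x_k)}^* t_{\edge(n_k,x_k)}$ and $U_{x_k} t_{g_k} = U_{r_E(\alpha)}$.

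It then remains to absorb the single term $t_{\edge(n_k,x_k)} U_{x_k}^*$ into $S_{e_{n_k}} = U_{s(e_{n_k})} \sum_{x \in \Xset{n_k}} t_{\edge(n_k,x)} U_x^*$. The key point is the orthogonality relation from Lemma~\ref{lem:uxux*uyuy*}: for $x, y \in \Xset{n_k}$ with $x \neq y$ we have $U_x^* U_y = 0$, and moreover $t_{\edge(n_k,x)}^* t_{\edge(n_k,y)} = 0$ for $x \neq y$ (distinct edges with mutually orthogonal ranges), so multiplying $U_{s(e_{n_k})}^* S_{e_{n_k}}$ — equivalently $S_{e_{n_k}}$ read through $U_{s(e_{n_k})}^* U_{s(e_{n_k})} = q_{s(e_{n_k})}$, which dominates the relevant range projections — against $t_{\edge(n_k,x_k)} U_{x_k}^*$ picks out exactly the $x = x_k$ summand. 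Thus $U_{s(e_{n_k})}\, t_{\edge(n_k,x_k)} U_{x_k}^* U_{r_E(\alpha)}$ equals $S_{e_{n_k}} U_{r_E(\alpha)}$ after we check that $S_{e_{n_k}} U_{r_E(\alpha)} = S_{e_{n_k}} U_{x_k}^* U_{x_k} U_{r_E(\alpha)} = S_{e_{n_k}} U_{x_k}^* U_{r_E(\alpha)}$ and that $S_{e_{n_k}} U_{x_k}^* = U_{s(e_{n_k})} t_{\edge(n_k,x_k)} U_{x_k}^* U_{x_k} U_{x_k}^* = U_{s(e_{n_k})} t_{\edge(n_k,x_k)} U_{x_k}^*$, again by the orthogonality. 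Combining the three displayed equalities yields $t_\alpha = S_{e_{n_1}} \cdots S_{e_{n_{k-1}}} S_{e_{n_k}} U_{r_E(\alpha)}$, completing the induction. The main obstacle is purely bookkeeping: making sure that when one expands $S_{e_{n_k}}$ as a sum over $\Xset{n_k}$, every cross term vanishes, and this is exactly what the last sentence of Lemma~\ref{lem:uxux*uyuy*} together with orthogonality of the edges $\edge(n_k,x)$ is designed to give.
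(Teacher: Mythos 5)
Your proof is correct and follows essentially the same route as the paper: induction on $k$, with the base case $\alpha=f_{r_E(\alpha)}$ and the inductive step resting on the factorization $f_{r_E(\alpha)}=f_{x_k}g_k$ (so that $U_{r_E(\alpha)}=U_{x_k}t_{g_k}$) together with the orthogonality $U_x^*U_{x_k}=0$ for $x\neq x_k$ from Lemma~\ref{lem:uxux*uyuy*}, which picks out the $x_k$ summand of $S_{e_{n_k}}$. The only blemish is in your final display, where the projection you insert should be the range projection $U_{x_k}U_{x_k}^*$ rather than $U_{x_k}^*U_{x_k}$ (as literally written, $S_{e_{n_k}}U_{x_k}^*U_{r_E(\alpha)}$ is typically zero); with the adjoints in the correct order the computation $S_{e_{n_k}}U_{r_E(\alpha)}=S_{e_{n_k}}U_{x_k}U_{x_k}^*U_{x_k}t_{g_k}=U_{s(e_{n_k})}t_{\edge(n_k,x_k)}t_{g_k}$ is exactly the paper's.
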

\begin{proof}
The proof proceeds by induction on $k$. When $k=0$, the path
$\alpha=g_0$ belongs to $F^*$ with $s_E(\alpha)\in
W_0\sqcup\Gamma_0$. By Definition~\ref{dfn:f-paths}, we have
$\alpha=f_{r_E(\alpha)}$. Hence $t_\alpha=U_{r_E(\alpha)}$.

Suppose as an inductive hypothesis that the result holds for
$k-1$, and fix
\[
\alpha=g_0\cdot \edge(n_1,x_1)\cdot g_1\cdot
\edge(n_2,x_2)\cdot g_{2}\cdots \edge(n_k,x_k)\cdot g_{k}\in E^*.
\]
Let $\alpha'=g_0\cdot \edge(n_1,x_1)\cdot g_1\cdot \edge(n_2,x_2)\cdot
g_{2}\cdots \edge(n_{k-1},x_{k-1})\cdot g_{k-1}$. Then
$\alpha=\alpha'\cdot\edge(n_{k},x_{k})\cdot g_{k}$.
By the inductive hypothesis,
\[
t_\alpha=t_{\alpha'}t_{\edge(n_{k},x_{k})}t_{g_{k}}
=S_{e_{n_1}}S_{e_{n_2}}\cdots S_{e_{n_{k-1}}}U_{r_E(\alpha')}
t_{\edge(n_{k},x_{k})}t_{g_{k}}.
\]
The path $f_{x_{k}}g_{k}$ satisfies
$s_E(f_{x_{k}}g_{k})=s_E(f_{x_{k}})\in W_0\sqcup\Gamma_0$ and
$r_E(f_{x_{k}}g_{k})=r_E(g_{k})=r_E(\alpha)$. By
Definition~\ref{dfn:f-paths} $f_{r_E(\alpha)}=f_{x_{k}}g_{k}$.
Hence $U_{r_E(\alpha)}=U_{x_{k}}t_{g_{k}}$, and
Lemma~\ref{lem:uxux*uyuy*} implies
\[
S_{e_{n_{k}}}U_{r_E(\alpha)}
=\bigg(U_{s(e_{n_{k}})}\sum_{x\in \Xset{n_{k}}}t_{\edge(n_{k},x)}U_{x}^*\bigg)
\big(U_{x_{k}}t_{g_{k}}\big)
=U_{s(e_{n_{k}})}t_{\edge(n_{k},x_{k})}t_{g_{k}}.
\]
Since $r_E(\alpha')=s_E(\edge(n_{k},x_{k}))=s(e_{n_{k}})$,
\[
t_\alpha
=S_{e_{n_1}}S_{e_{n_2}}\cdots S_{e_{n_{k-1}}}S_{e_{n_{k}}}U_{r_E(\alpha)}.
\qedhere
\]
\end{proof}

The sum $\sum_{x\in
W_0\sqcup\Gamma_0}q_x$ converges strictly to a projection $Q
\in \mathcal{M}(C^*(E))$ such that
\[
Q t_\alpha t^*_\beta = \begin{cases}
t_\alpha t^*_\beta &\text{ if $s(\alpha) \in W_0\sqcup\Gamma_0$}\\
0 &\text{ otherwise}
\end{cases}
\]
(see \cite[Lemma~2.10]{Raeburn2005} or
\cite[Lemma~2.1.13]{Tomforde2006} for details). We then have
\begin{align*}
QC^*(E)Q=\cspa\big\{t_{\alpha}t_{\alpha'}^*\in C^*(E):
\alpha,\alpha'\in E^* & \text{ with  $r_E(\alpha)=r_E(\alpha')$} \\
& \text{ and $s_E(\alpha),s_E(\alpha')\in W_0\sqcup\Gamma_0$}\big\}.
\end{align*}

\begin{proposition}\label{prop:Image=QC^*(E)Q}
We have $\phi(C^*(\G)) = QC^*(E)Q$.
\end{proposition}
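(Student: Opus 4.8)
The plan is to prove the two inclusions $\phi(C^*(\G)) \subseteq QC^*(E)Q$ and $QC^*(E)Q \subseteq \phi(C^*(\G))$ separately.

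For the inclusion $\phi(C^*(\G)) \subseteq QC^*(E)Q$: First I would observe that $C^*(\G)$ is densely spanned by elements of the form $s_\alpha p_A s_\beta^*$ where $\alpha, \beta$ are paths in $\G$ with $s(\alpha), s(\beta) \in G^0$, and $A \in \G^0$; so it suffices to show that $\phi$ maps each such element into $QC^*(E)Q$. By Lemma~\ref{lem:tg}, for a path $\alpha = e_{n_1} e_{n_2} \cdots e_{n_k}$ in $\G$ we have $S_{e_{n_1}} \cdots S_{e_{n_k}} = t_{\alpha'} U_{r_E(\alpha')}^*$ where $\alpha'$ is a suitable path in $E$ with $s_E(\alpha') \in W_0 \sqcup \Gamma_0$; combined with Proposition~\ref{prop:r'(o)} (which handles the $p_A$ factor via the representation $\{p_A\}$ of $\G^0$ generated by the $p_{r'(\omega)}$ and $p_{\{v\}}$), one sees that $\phi(s_\alpha p_A s_\beta^*)$ is a finite linear combination of elements $t_\gamma t_\delta^*$ with $r_E(\gamma) = r_E(\delta)$ and $s_E(\gamma), s_E(\delta) \in W_0 \sqcup \Gamma_0$. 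By the displayed description of $QC^*(E)Q$, this lands in $QC^*(E)Q$.

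For the reverse inclusion $QC^*(E)Q \subseteq \phi(C^*(\G))$: By the displayed spanning description, it suffices to show $t_\alpha t_\beta^* \in \phi(C^*(\G))$ whenever $r_E(\alpha) = r_E(\beta) =: x$ and $s_E(\alpha), s_E(\beta) \in W_0 \sqcup \Gamma_0$. Write $\alpha$ and $\beta$ in the canonical form of Lemma~\ref{lem:unique expression}; since their sources lie in $W_0 \sqcup \Gamma_0$, Lemma~\ref{lem:tg} applies to give $t_\alpha = S_{e_{m_1}} \cdots S_{e_{m_j}} U_x$ and $t_\beta = S_{e_{n_1}} \cdots S_{e_{n_k}} U_x$. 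Hence
\[
t_\alpha t_\beta^* = S_{e_{m_1}} \cdots S_{e_{m_j}} (U_x U_x^*) S_{e_{n_k}}^* \cdots S_{e_{n_1}}^*.
\]
The outer factors $S_{e_{m_i}}$ and $S_{e_{n_i}}^*$ are in $\phi(C^*(\G))$ by definition of $\phi$, and the middle factor $U_x U_x^*$ is in $\phi(C^*(\G))$ by Lemma~\ref{lem:UU^*inImage}. Since $\phi(C^*(\G))$ is a $C^*$-subalgebra of $C^*(E)$, it is closed under products, so $t_\alpha t_\beta^* \in \phi(C^*(\G))$; taking closed linear spans finishes this inclusion.

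The main obstacle I expect is the first inclusion, specifically verifying carefully that $\phi$ of a spanning element $s_\alpha p_A s_\beta^*$ is genuinely expressible via Lemma~\ref{lem:tg} in the required $t_\gamma t_\delta^*$ form with sources in $W_0 \sqcup \Gamma_0$ — one must be careful that the path produced by Lemma~\ref{lem:tg} indeed has source in $W_0 \sqcup \Gamma_0$ (this is built into the lemma's hypothesis and the definition of $f_x$, but the $U_x$ tails must be tracked), and one must handle the $p_A$ factor by reducing a general $A \in \G^0$ to the generators $\{v\}$ and $r'(\omega)$ using Proposition~\ref{prop:r'(o)} and the representation property, absorbing the resulting projections into the path terms. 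The reverse inclusion, by contrast, is essentially immediate from Lemma~\ref{lem:tg} and Lemma~\ref{lem:UU^*inImage} once the spanning set for $QC^*(E)Q$ is in hand.
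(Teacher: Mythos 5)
Your proposal is correct, and your argument for the inclusion $QC^*(E)Q \subseteq \phi(C^*(\G))$ is essentially the paper's: apply Lemma~\ref{lem:tg} to both paths, write $t_\alpha t_{\alpha'}^*$ as a product of $S_{e_{n_i}}$'s, the projection $U_xU_x^*$, and adjoints of $S_{e_{m_j}}$'s, and invoke Lemma~\ref{lem:UU^*inImage}. Where you genuinely diverge is the forward inclusion $\phi(C^*(\G)) \subseteq QC^*(E)Q$. You push the spanning elements $s_\alpha p_A s_\beta^*$ of $C^*(\G)$ through $\phi$ and check that each lands in the displayed spanning set of the corner; this can be made to work, but it forces you to expand each product $S_{e_{n_1}}\cdots S_{e_{n_k}}$ as a finite sum over choices of $x_i\in\Xset{n_i}$ and to reduce a general $A\in\G^0$ to the generators $\{v\}$ and $r'(\omega)$ via Proposition~\ref{prop:r'(o)} --- exactly the bookkeeping you flag as the main obstacle. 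The paper's argument for this direction is a one-liner that bypasses all of it: since $s_E(f_x)\in W_0\sqcup\Gamma_0$ for every $x\in E^0$ (Definition~\ref{dfn:f-paths}), one has $QU_x=U_x$, so each generator $P_v=U_vU_v^*$ and $S_{e_n}=U_{s(e_n)}\sum_{x\in\Xset{n}}t_{\edge(n,x)}U_x^*$ begins with some $U_y$ and ends with some $U_z^*$ and hence lies in $QC^*(E)Q$; since the corner is a $C^*$-subalgebra and $\phi$ is a homomorphism, all of $\phi(C^*(\G))$ lies there. Adopting that shortcut removes the only delicate step in your write-up: for this direction you then need neither the spanning family for $C^*(\G)$, nor Proposition~\ref{prop:r'(o)}, nor a reverse reading of Lemma~\ref{lem:tg}.
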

\begin{proof}
Let $x \in E^0$. Since $s_E(f_x) \in W_0\sqcup\Gamma_0$, we
have $U_x = Q U_x$. This and the definitions of $\{P_v : v\in
G^0\}$ and $\{S_{e_n} : e_n\in\G^1\}$ imply that each $P_v$ and
each $S_{e_n}$ is in $QC^*(E)Q$. Hence $\phi(C^*(\G)) \subset
QC^*(E)Q$. Let $\alpha,\alpha'\in E^*$ such that
$r_E(\alpha)=r_E(\alpha')=x\in E^0$ and
$s_E(\alpha),s_E(\alpha')\in W_0\sqcup\Gamma_0$. By Lemma
\ref{lem:tg},
\[
t_{\alpha}t_{\alpha'}^*=S_{e_{n_1}}S_{e_{n_2}}\cdots S_{e_{n_k}}U_{x}U_{x}^*
S_{e_{m_l}}^*\cdots S_{e_{m_2}}^*S_{e_{m_1}}^*
\]
for some $e_{n_i},e_{m_j} \in \G^1$.
Lemma~\ref{lem:UU^*inImage} therefore implies that
$t_{\alpha}t_{\alpha'}^*\in
\phi(C^*(\G))$. Hence $QC^*(E)Q \subset \phi(C^*(\G))$.
\end{proof}

\begin{lemma}\label{lem:fullproj}
The projection $Q$ is full.
\end{lemma}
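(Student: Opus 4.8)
The plan is to show that the closed two-sided ideal $I$ of $C^*(E)$ generated by $Q$ equals $C^*(E)$. First I would note that, because $Qq_x = q_x$ for every $x \in W_0\sqcup\Gamma_0$ and $Q$ is the strict limit of the finite partial sums of $\sum_{x\in W_0\sqcup\Gamma_0} q_x$, the ideal $I$ coincides with the closed two-sided ideal generated by the set of vertex projections $\{q_x : x \in W_0\sqcup\Gamma_0\}$. In particular $q_x \in I$ for every $x \in W_0\sqcup\Gamma_0$.

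The heart of the argument is to deduce from this that $q_y \in I$ for \emph{every} $y \in E^0$, not just those in $W_0\sqcup\Gamma_0$. Here I would use the paths $f_y$ of Definition~\ref{dfn:f-paths}: recall that $f_y \in F^* \subseteq E^*$ satisfies $r_E(f_y) = y$ and $s_E(f_y) \in W_0\sqcup\Gamma_0$, and that the partial isometry $U_y = t_{f_y}$ of Definition~\ref{dfn:Ux} satisfies $U_y^*U_y = q_y$ and $U_yU_y^* \le q_{s_E(f_y)}$. Since $s_E(f_y) \in W_0\sqcup\Gamma_0$, the projection $q_{s_E(f_y)}$ lies in $I$ by the previous paragraph, so $U_yU_y^* = q_{s_E(f_y)}\bigl(U_yU_y^*\bigr)q_{s_E(f_y)} \in I$, and hence
\[
q_y = U_y^*U_yU_y^*U_y = U_y^*\bigl(U_yU_y^*\bigr)U_y \in I .
\]

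Finally, $I$ contains every generator of $C^*(E)$: it contains all the vertex projections $q_y$ by the above, and for each $\alpha \in E^1$ it contains $t_\alpha = t_\alpha t_\alpha^* t_\alpha = t_\alpha q_{r_E(\alpha)}$. Therefore $I = C^*(E)$, so $Q$ is full. There is no substantial obstacle in this argument; the one point that needs care is the passage from the ideal generated by the multiplier $Q$ to the ideal generated by the vertex projections of $W_0\sqcup\Gamma_0$, together with the observation — already built into Definition~\ref{dfn:f-paths} — that every vertex of $E$ is the range of a path starting at a vertex in $W_0\sqcup\Gamma_0$.
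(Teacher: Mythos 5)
Your proof is correct and takes essentially the same route as the paper's: both reduce fullness to showing that every vertex projection $q_y = U_y^*U_y$ lies in the ideal generated by $Q$, using that $U_y = t_{f_y}$ has source $s_E(f_y) \in W_0\sqcup\Gamma_0$, so that $U_yU_y^*$ lies in the corner $QC^*(E)Q$. The additional details you supply (identifying the ideal generated by the multiplier $Q$ with the ideal generated by $\{q_x : x\in W_0\sqcup\Gamma_0\}$, and absorbing the edge generators via $t_\alpha = t_\alpha q_{r_E(\alpha)}$) are routine and correct.
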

\begin{proof}
For $x \in E^0$, we have $U_xU_x^* \in QC^*(E)Q$. Hence
$q_x=U_x^*U_x$ is in the ideal generated by $QC^*(E)Q$. Since
the ideal generated by $\{q_x:x\in E^0\}$ is $C^*(E)$, the
ideal generated by $QC^*(E)Q$ is also $C^*(E)$.
\end{proof}

\begin{theorem}\label{thm:fullcorner}
The homomorphism $\phi$ of Proposition~\ref{prop:phi} is an
isomorphism from $C^*(\G)$ to the full corner $QC^*(E)Q$.
Consequently $C^*(\G)$ and $C^*(E)$ are Morita equivalent.
\end{theorem}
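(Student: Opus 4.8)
The plan is to combine the pieces already assembled in this section. First I would recall that by Proposition~\ref{prop:phi} the homomorphism $\phi\colon C^*(\G)\to C^*(E)$ is injective, so it restricts to an isomorphism onto its image. Then by Proposition~\ref{prop:Image=QC^*(E)Q} that image is exactly $QC^*(E)Q$. Since $Q\in\mathcal M(C^*(E))$ is a projection, $QC^*(E)Q$ is by definition a corner of $C^*(E)$, and Lemma~\ref{lem:fullproj} shows that $Q$ is full, so $QC^*(E)Q$ is a \emph{full} corner. Stringing these three facts together gives the first assertion: $\phi$ is an isomorphism from $C^*(\G)$ onto the full corner $QC^*(E)Q$ of $C^*(E)$.

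For the ``consequently'' clause I would invoke the standard fact that a $C^*$-algebra is Morita equivalent to any of its full corners. Concretely, if $p$ is a full projection in $\mathcal M(B)$, then $pB$ is a $B$--$pBp$ imprimitivity bimodule (with the obvious left and right actions and inner products), so $B$ and $pBp$ are Morita equivalent; a reference is \cite{Raeburn2005} or the Rieffel/Brown--Green--Rieffel theory. Applying this with $B=C^*(E)$ and $p=Q$, and using that $C^*(\G)\cong QC^*(E)Q$, yields that $C^*(\G)$ and $C^*(E)$ are Morita equivalent.

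There is essentially no obstacle at this stage: the theorem is a bookkeeping corollary, since all the substantive work—constructing the Exel-Laca $\G$-family $\{P_v,S_{e_n}\}$ in $C^*(E)$, verifying Condition~(EL) via Corollary~\ref{cor:cond(4)check}, establishing injectivity of $\phi$ through the gauge-invariant uniqueness theorem, identifying the image as $QC^*(E)Q$, and proving $Q$ is full—has already been done in Propositions~\ref{prop:ELGfam}, \ref{prop:phi}, \ref{prop:Image=QC^*(E)Q} and Lemmas~\ref{lem:UU^*inImage}, \ref{lem:tg}, \ref{lem:fullproj}. If anything required care it would merely be stating the Morita equivalence fact with the correct hypotheses (fullness of the projection is exactly what guarantees the bimodule is an imprimitivity bimodule rather than just a right Hilbert module), and that is precisely why Lemma~\ref{lem:fullproj} was proved. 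So the proof is short: quote Propositions~\ref{prop:phi} and \ref{prop:Image=QC^*(E)Q} and Lemma~\ref{lem:fullproj}, conclude $\phi$ is an isomorphism onto the full corner $QC^*(E)Q$, and then cite the standard Morita equivalence of a $C^*$-algebra with its full corners.
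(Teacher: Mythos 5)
Your proposal is correct and matches the paper's proof, which likewise just cites Proposition~\ref{prop:phi}, Proposition~\ref{prop:Image=QC^*(E)Q}, and Lemma~\ref{lem:fullproj}. The extra detail you give about the imprimitivity bimodule $QC^*(E)$ is standard background the paper leaves implicit.
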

\begin{proof}
This follows from Proposition \ref{prop:phi},
Proposition \ref{prop:Image=QC^*(E)Q},
and Lemma \ref{lem:fullproj}.
\end{proof}

\begin{theorem}
The three classes of graph algebras, of Exel-Laca algebras, and
of ultragraph algebras coincide up to Morita equivalence.
\end{theorem}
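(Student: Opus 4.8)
The plan is to deduce the theorem formally from Theorem~\ref{thm:fullcorner} together with three facts already in the literature: every graph algebra is isomorphic to an ultragraph algebra \cite[Proposition~3.1]{Tom}; every Exel-Laca algebra is isomorphic to an ultragraph algebra \cite[Theorem~4.5, Remark~4.6]{Tom}; and every graph algebra is Morita equivalent to an Exel-Laca algebra, which is the affirmative answer to Question~1 obtained by combining \cite[Theorem~10]{FLR} with the standard fact that adding heads to sources and tails to sinks does not change the Morita equivalence class of a graph algebra \cite[Lemma~1.2]{BPRS2000}. First I would make precise that ``coincide up to Morita equivalence'' means that the three collections of Morita equivalence classes of $C^*$-algebras---those meeting the class of graph algebras, those meeting the class of Exel-Laca algebras, and those meeting the class of ultragraph algebras---are equal. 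Since $*$-isomorphism implies Morita equivalence and Morita equivalence is transitive, it then suffices to prove the relevant inclusions among these three collections.

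For the comparison of graph algebras with ultragraph algebras, one inclusion is immediate: by \cite[Proposition~3.1]{Tom} every graph algebra is isomorphic, hence Morita equivalent, to an ultragraph algebra, so the collection of Morita equivalence classes of graph algebras is contained in that of ultragraph algebras. For the reverse inclusion I would invoke Theorem~\ref{thm:fullcorner}, which asserts that for every ultragraph $\G$ there is a graph $E$ with $C^*(\G)$ a full corner of $C^*(E)$, and hence Morita equivalent to $C^*(E)$. Thus every ultragraph algebra lies in the Morita equivalence class of a graph algebra, and the two collections coincide.

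To bring in Exel-Laca algebras I would argue similarly. By \cite[Theorem~4.5, Remark~4.6]{Tom} every Exel-Laca algebra is isomorphic to an ultragraph algebra, so the collection of Morita equivalence classes of Exel-Laca algebras is contained in that of ultragraph algebras. Conversely, given an ultragraph $\G$, Theorem~\ref{thm:fullcorner} gives a graph $E$ with $C^*(\G)$ Morita equivalent to $C^*(E)$, and the affirmative answer to Question~1 gives an Exel-Laca algebra $\mathcal{O}_A$ Morita equivalent to $C^*(E)$; transitivity then shows $C^*(\G)$ is Morita equivalent to $\mathcal{O}_A$. Hence every ultragraph algebra lies in the Morita equivalence class of an Exel-Laca algebra, so the Exel-Laca and ultragraph collections coincide as well. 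Combining the two coincidences, all three collections are equal, which is the statement of the theorem; in passing this records the affirmative answer to Question~2.

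I do not anticipate any genuine obstacle: all of the substantive work has been carried out in Theorem~\ref{thm:fullcorner} (and, for the graph-to-Exel-Laca direction, in \cite{FLR, BPRS2000}), and what remains is a short transitivity argument. The only place where a little care is needed is in unwinding the meaning of ``coincide up to Morita equivalence'' and verifying that one really has inclusions in both directions between each pair of the three classes, rather than merely a one-sided inclusion.
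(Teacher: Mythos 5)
Your proposal is correct, and its overall shape matches the paper's proof: both reduce the theorem to Theorem~\ref{thm:fullcorner} plus a short transitivity argument using known isomorphism results from \cite{Tom}. The one place you diverge is in showing that every ultragraph algebra is Morita equivalent to an Exel-Laca algebra. The paper cites \cite[Theorem~4.5 and Proposition~6.6]{Tom} directly, which says that every ultragraph algebra is isomorphic to a full corner of an Exel-Laca algebra; you instead chain through graph algebras, combining Theorem~\ref{thm:fullcorner} (ultragraph algebra is Morita equivalent to a graph algebra) with the affirmative answer to Question~1 from \cite[Theorem~10]{FLR} and \cite[Lemma~1.2]{BPRS2000} (graph algebra is Morita equivalent to an Exel-Laca algebra). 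Both routes are valid and both rest on Theorem~\ref{thm:fullcorner} for the hard direction; yours trades dependence on Tomforde's Proposition~6.6 for dependence on the Fowler--Laca--Raeburn desingularization result, and has the mild aesthetic cost of passing through an extra Morita equivalence rather than a single isomorphism onto a full corner. Your care in spelling out what ``coincide up to Morita equivalence'' means and checking both inclusions for each pair is exactly the bookkeeping the paper leaves implicit.
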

\begin{proof}
By \cite[Theorem~4.5 and Remark~4.6]{Tom}, every Exel-Laca
algebra is isomorphic to an ultragraph algebra. Moreover, by
\cite[Theorem~4.5 and Proposition~6.6]{Tom}, every ultragraph
algebra is isomorphic to a full corner of an Exel-Laca algebra.

Proposition~3.1 of \cite{Tom} implies that every graph
$C^*$-algebra is isomorphic to an ultragraph algebra. Finally,
Theorem~\ref{thm:fullcorner} implies that every ultragraph
algebra is Morita equivalent to a graph algebra.
\end{proof}

\begin{remark} \label{rmk:E-L-full-corn}
Note that Theorem~\ref{thm:fullcorner} also shows how to
realize an Exel-Laca algebra as the full corner of a graph
algebra.  If $\mathcal{A}$ is a countably indexed
$\{0,1\}$-matrix with no zero rows, let $\G_A$ be the
ultragraph of \cite[Definition~2.5]{Tom}, which has $A$ as its
edge matrix.  It follows from \cite[Theorem~4.5]{Tom} that the
Exel-Laca algebra $\mathcal{O}_A$ is isomorphic to $C^*(\G_A)$.
If we let $E$ be a graph constructed from $\G_A$ as in
Section~\ref{graph-sec}, then $\mathcal{O}_A$ is isomorphic to
a full corner of $C^*(E)$. It is noteworthy that it seems very
difficult to see how to construct the graph $E$ directly from
the infinite matrix $A$ without at least implicit reference to
the ultragraph $\G_A$.
\end{remark}

\begin{remark}\label{rmk:ultragraph=graph}
With the notation as above, it is straightforward to see that
the following conditions are equivalent:
\begin{enumerate}
\rom
\item The homomorphism $\phi$ of Proposition~\ref{prop:phi}
  is surjective.
\item
The projection $Q$ is the unit of $\mathcal{M} C^*(E)$.
\item
$W_+\sqcup \Gamma_+ = \emptyset$
\item
$\Delta=\emptyset$.
\item
For all $e\in \G^1$, $|r(e)|<\infty$.
\end{enumerate}
In this case, the graph $E=(E^0,E^1,r_E,s_E)$ is obtained as
$E^0=G^0$, $E^1=\{(e,x) : e\in\G^1, x\in r(e)\}$,
$s_E(e,x)=s(e)$ and $r_E(e,x)=x$. In other words, the graph $E$
is obtained from the ultragraph $\G$ by changing each ultraedge
$e\in \G^1$ to a set of (ordinary) edges $\{(e,x) : x\in
r(e)\}$.
\end{remark}

As a consequence of Theorem~\ref{thm:fullcorner}, we obtain the
following characterization of real rank zero for ultragraph
algebras.

\begin{proposition}
Let $\G$ be an ultragraph.
Then $C^*(\G)$ has real rank zero if and only
if $\G$ satisfies Condition~(K).
\end{proposition}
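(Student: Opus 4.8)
The plan is to combine Theorem~\ref{thm:fullcorner} with the known characterisation of real rank zero for graph $C^*$-algebras. Recall that Jeong and Park (building on work of Hjelmborg and others) proved that a graph $C^*$-algebra $C^*(E)$ has real rank zero if and only if $E$ satisfies Condition~(K). The first step is therefore to invoke Theorem~\ref{thm:fullcorner}, which gives that $C^*(\G)$ is isomorphic to the full corner $QC^*(E)Q$ of $C^*(E)$, where $E$ is the graph constructed in Section~\ref{graph-sec}.

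Next I would use the fact that real rank zero passes between a $C^*$-algebra and its full corners: if $p$ is a full projection in the multiplier algebra of a $C^*$-algebra $B$, then $B$ has real rank zero if and only if $pBp$ does. (This is a standard consequence of the fact that $B$ and $pBp$ are Morita equivalent, together with the stability of real rank zero under stable isomorphism --- real rank zero is preserved by passing to hereditary subalgebras and to matrix amplifications, hence is a Morita invariant for $\sigma$-unital $C^*$-algebras.) Since $Q$ is full in $\mathcal{M}(C^*(E))$ by Lemma~\ref{lem:fullproj}, this yields that $C^*(\G) \cong QC^*(E)Q$ has real rank zero if and only if $C^*(E)$ does.

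Then I would apply the graph-algebra characterisation to conclude that $C^*(E)$ has real rank zero if and only if $E$ satisfies Condition~(K). Finally, Proposition~\ref{prop:Cond(K)} shows that $E$ satisfies Condition~(K) if and only if $\G$ satisfies Condition~(K). Chaining these equivalences gives the result: $C^*(\G)$ has real rank zero $\iff C^*(E)$ has real rank zero $\iff E$ satisfies Condition~(K) $\iff \G$ satisfies Condition~(K).

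The main obstacle --- really the only non-bookkeeping point --- is to be careful about the hypotheses in the Morita-invariance of real rank zero and in the graph-algebra result. One must make sure that $C^*(E)$ falls within the scope of the cited real-rank-zero theorem for graph $C^*$-algebras (which is stated for arbitrary countable graphs, so there is no issue), and one should cite the appropriate version of ``real rank zero is a Morita/full-corner invariant'' --- for instance via the observation that $QC^*(E)Q$ is a full hereditary subalgebra of $C^*(E)$, and real rank zero is inherited by hereditary subalgebras and, conversely, a $C^*$-algebra with a full hereditary subalgebra of real rank zero has real rank zero (Brown's results on hereditary subalgebras, or the Morita-invariance proved by Brown--Pedersen). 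Everything else is a direct substitution of previously established results.
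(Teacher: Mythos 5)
Your proposal is correct and follows essentially the same route as the paper: invoke Theorem~\ref{thm:fullcorner} and Lemma~\ref{lem:fullproj} to get Morita equivalence of $C^*(\G)$ with $C^*(E)$, apply the Brown--Pedersen result that real rank zero is a Morita invariant together with Jeong's characterisation of real rank zero for graph algebras via Condition~(K), and finish with Proposition~\ref{prop:Cond(K)}. The only cosmetic difference is that the paper cites \cite[Theorem~3.8]{BroPed} and \cite[Theorem~3.5]{Jeong} directly rather than elaborating on why full corners preserve real rank zero.
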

\begin{proof}
Let $E$ be a graph constructed from $\G$ as in
Section~\ref{graph-sec}. By Theorem~\ref{thm:fullcorner},
$C^*(E)$ is Morita equivalent to $C^*(\G)$. Hence
\cite[Theorem~3.8]{BroPed} implies that $C^*(\G)$ has real rank
zero if and only if $C^*(E)$ has real rank zero. By
\cite[Theorem~3.5]{Jeong}, $C^*(E)$ has real rank zero if and
only if $E$ satisfies Condition~(K). By
Proposition~\ref{prop:Cond(K)}, $E$ satisfies Condition~(K) if
and only if $\G$ satisfies Condition~(K).
\end{proof}

\section{Gauge-invariant ideals} \label{ideal-sec}

We continue in this section with a fixed ultragraph $\G$, and
let $E$ be a graph constructed from $\G$ as in
Section~\ref{graph-sec}. We let $Q \in \mathcal{M}(C^*(E))$ and
$\phi : C^*(\G) \to Q C^*(E) Q$ be as in
Theorem~\ref{thm:fullcorner}.

By Theorem~\ref{thm:fullcorner}, the homomorphism $\phi$
induces a bijection from the set of ideals of $C^*(\G)$ to the
set of ideals of $C^*(E)$. We will show in Proposition~\ref{prop:action} that
this bijection restricts to a bijection between gauge-invariant
ideals of $C^*(\G)$ and gauge-invariant ideals of $C^*(E)$.

Let $\beta$ be the action of $\T$ on $C^*(E)$ constructed in
the proof of Proposition \ref{prop:phi}. Specifically,
$\beta_z(q_x)=q_x$ for $x\in E^0$,
$\beta_z(t_{\overline{x}})=t_{\overline{x}}$ for $x\in
W_+\sqcup\Gamma_+$, and
$\beta_z(t_{\edge(n,x)})=zt_{\edge(n,x)}$ for $e_n\in\G^1, x\in
\Xset{n}$. Let $\alpha\in E^*$ and let
\[
\alpha=g_0\cdot (n_1,x_1)\cdot g_1\cdot (n_2,x_2)\cdot g_{2}\cdots
(n_k,x_k)\cdot g_{k}
\]
be the unique expression for $\alpha$ where each $n_i\in\N$,
$x_i\in \Xset{n_i}$ and $g_i\in F^*$ as in
Lemma~\ref{lem:unique expression}. We define
$m(\alpha)=\max\{n_1,\ldots,n_k\}$ and $l(\alpha)=k$. Then one
can verify that $\beta_z(t_{\alpha})=z^{l(\alpha)}t_{\alpha}$.
It follows (see, for example, the argument of
\cite[Corollary~3.3]{Raeburn2005}) that the fixed point algebra
$C^*(E)^{\beta}$ of the action $\beta$ satisfies
\[
C^*(E)^\beta = \cspa\{t_\alpha t_{\alpha'}^* :
\text{$\alpha,\alpha'\in E^*$ with $l(\alpha)=l(\alpha')$}\}.
\]
We define
\[
C^*(E)^{\circ}:=\cspa\{t_\alpha t_\alpha^* \in C^*(E) : \alpha \in E^*\} \subset C^*(E)^\beta.
\]
Then $C^*(E)^\circ$ is an abelian \Csa of $C^*(E)$.

\begin{lemma}\label{lem:beta-inv1}
For $k,n\in\N$ and $x\in E^0$, define
\[
E^*_{k,n,x}:=\{\alpha\in E^* : l(\alpha)=k,\ m(\alpha)\leq n,\ r_E(\alpha)=x\}.
\]
Then $E^*_{k,n,x}$ is finite and
$t_\alpha^*t_{\alpha'}=0$ for $\alpha,\alpha'\in E^*_{k,n,x}$
with $\alpha\neq \alpha'$.
Hence
\[
\A_{k,n,x}:=
\spa\big\{t_{\alpha} t_{\alpha'}^* : \alpha,\alpha'\in E^*_{k,n,x}\big\}
\]
is a \Ca isomorphic to $M_{|E^*_{k,n,x}|}(\C)$.
\end{lemma}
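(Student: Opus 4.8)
The plan is to verify the three assertions in turn: that $E^*_{k,n,x}$ is finite, that distinct paths in $E^*_{k,n,x}$ have orthogonal source projections (in the sense $t_\alpha^* t_{\alpha'} = 0$), and that these two facts force $\A_{k,n,x}$ to be a set of matrix units spanning a copy of $M_{|E^*_{k,n,x}|}(\C)$.

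First I would establish finiteness. By Lemma~\ref{lem:unique expression}, any $\alpha \in E^*_{k,n,x}$ has a unique factorisation $\alpha = g_0 \cdot \edge(n_1,x_1) \cdot g_1 \cdots \edge(n_k,x_k) \cdot g_k$ with $g_i \in F^*$ and $x_i \in \Xset{n_i}$; the condition $l(\alpha) = k$ fixes the number of $\edge(\cdot,\cdot)$-factors, and $m(\alpha) \le n$ forces each $n_i \in \{1,\dots,n\}$. Since each $\Xset{n_i}$ is finite by Lemma~\ref{lem:Wnfin}, there are only finitely many choices of the $(n_i, x_i)$. It then remains to bound the $F^*$-pieces: Lemma~\ref{lem:paths in F} says a path in $F^*$ is uniquely determined by its source and range, and since $r_E(\alpha) = x$ is fixed and each $g_i$ must begin where the preceding $\edge(\cdot,\cdot)$-edge ends (which is determined by $(n_{i-1},x_{i-1})$ or, for $g_0$, is a source in $G^0$ emitting no $F$-edge so $g_0 = s_E(\alpha) \in G^0$), the entire chain is determined by the finite data $(n_1,x_1),\dots,(n_k,x_k)$ together with $x$. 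Hence $E^*_{k,n,x}$ is finite.

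Next, orthogonality. For $\alpha, \alpha' \in E^*_{k,n,x}$ with $\alpha \ne \alpha'$, I want $t_\alpha^* t_{\alpha'} = 0$. Both paths have the same range $x$, so $t_\alpha^* t_\alpha = q_x = t_{\alpha'}^* t_{\alpha'}$, and $t_\alpha^* t_{\alpha'}$ is nonzero only if one of $\alpha,\alpha'$ extends the other (standard fact in graph algebras: comparing the two paths edge by edge from the range back, using that distinct edges in $E^1$ have orthogonal range projections $t_e t_e^*$; if they ever differ at some position, the product vanishes). But $\alpha$ and $\alpha'$ have the same length as measured by $l(\cdot)$ — they each have exactly $k$ edges of the form $\edge(\cdot,\cdot)$ — though not necessarily the same total length $|\alpha|$. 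So I cannot immediately conclude. The key observation is that if $\alpha'$ properly extends $\alpha$, then $\alpha' = \alpha \gamma$ with $\gamma$ a nontrivial path from $x = r_E(\alpha)$; but then comparing the factorisations from Lemma~\ref{lem:unique expression}, the extra edges of $\gamma$ all lie in $F^1$ (since the count of $\edge(\cdot,\cdot)$-edges would otherwise exceed $k$), forcing $\gamma \in F^*$ of nonzero length with $s_E(\gamma) = x$. However $x \in G^0 \sqcup \Delta$; Lemma~\ref{lem:paths in F}(1) says a nonzero-length path in $F$ has source in $\Delta$, and if $x \in G^0$ then $x$ emits no edge of $F$, contradiction — wait, this only rules out $x \in G^0$. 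If $x \in \Delta$, a nonzero path in $F$ can start at $x$; but then $\alpha' = \alpha\gamma$ has range $r_E(\gamma) \ne x$ in general, contradicting $r_E(\alpha') = x$, unless $\gamma$ is a return path, which Lemma~\ref{lem:paths in F} forbids in $F$. So $\gamma$ has length $0$, $\alpha = \alpha'$, contradiction. Hence $t_\alpha^* t_{\alpha'} = 0$.

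Finally, the matrix-unit structure. Having orthogonality $t_\alpha^* t_{\alpha'} = \delta_{\alpha,\alpha'} q_x$ for $\alpha,\alpha' \in E^*_{k,n,x}$, and since each $t_\alpha$ is a partial isometry with $t_\alpha t_\alpha^* \le q_{s_E(\alpha)}$, a routine computation gives $(t_\alpha t_{\alpha'}^*)(t_\beta t_{\gamma}^*) = \delta_{\alpha',\beta}\, t_\alpha t_\gamma^*$ for all four-tuples in $E^*_{k,n,x}$. Thus $\{t_\alpha t_{\alpha'}^* : \alpha,\alpha' \in E^*_{k,n,x}\}$ is a system of $|E^*_{k,n,x}|^2$ matrix units, and $\A_{k,n,x} = \spa\{t_\alpha t_{\alpha'}^*\}$ is therefore a $C^*$-algebra isomorphic to $M_{|E^*_{k,n,x}|}(\C)$ via $t_\alpha t_{\alpha'}^* \mapsto E_{\alpha,\alpha'}$ (these matrix units are nonzero since each $t_\alpha \ne 0$, as every vertex projection $q_x$ is nonzero in $C^*(E)$, and the graph has no sinks issues preventing nondegeneracy — more simply, $t_\alpha^* t_\alpha = q_x \ne 0$).

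I expect the orthogonality step to be the main obstacle, precisely because $l(\alpha) = l(\alpha')$ is a coarser invariant than $|\alpha| = |\alpha'|$, so the usual graph-algebra dichotomy ``one path extends the other or the product is zero'' has to be combined with the structural facts about $F$ from Lemma~\ref{lem:paths in F} (no return paths in $F$, paths in $F$ determined by endpoints, sources of nontrivial $F$-paths lie in $\Delta$) to rule out the extension case. Everything else is bookkeeping with Lemma~\ref{lem:unique expression} and the Cuntz-Krieger relations.
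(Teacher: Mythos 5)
Your argument follows the same route as the paper's proof, which is a two-line version of exactly this: finitely many paths in $F^*$ end at any given vertex, hence $E^*_{k,n,x}$ is finite, and the $t_\alpha t_{\alpha'}^*$ are matrix units. Your treatment of the orthogonality step is the substantive content the paper leaves to the reader, and it is correct: if $t_\alpha^* t_{\alpha'} \neq 0$ then (say) $\alpha' = \alpha\gamma$, the condition $l(\alpha) = l(\alpha')$ forces $l(\gamma) = 0$ so $\gamma \in F^*$, and then $s_E(\gamma) = r_E(\gamma) = x$ makes $\gamma$ a return path in $F$, which Lemma~\ref{lem:paths in F} rules out unless $\gamma$ is trivial.

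There is one slip in your finiteness argument: the parenthetical claim that $g_0 = s_E(\alpha) \in G^0$. That reasoning is imported from Lemma~\ref{lem:path}, where the source of the path is a fixed vertex of $G^0$; here $s_E(\alpha)$ is unconstrained, so $g_0$ can be a nontrivial path in $F^*$ (necessarily with source in $\Delta$) whose range is $s(e_{n_1})$. For example, when $k = 0$ and $x \in \Gamma_+$, the set $E^*_{0,n,x}$ consists of all terminal segments of $f_x$, not just the vertex $x$. Consequently $\alpha$ is \emph{not} uniquely determined by the data $(n_1,x_1),\dots,(n_k,x_k)$ and $x$. The conclusion nevertheless stands, for the reason the paper actually uses: each vertex receives at most one edge of $F$, so every path in $F^*$ with range $y$ is a terminal segment of $f_y$, and there are only $|f_y|+1$ of these. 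Applying this with $y = s(e_{n_1})$ to bound the choices of $g_0$ (and with $y = x$ when $k = 0$) repairs the count, and the rest of your proof goes through unchanged.
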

\begin{proof}
For each $x\in E^0$, only finitely many paths $g$ in $F^*$
satisfy $r_E(g)=x$. Hence the set $E^*_{k,n,x}$ is finite. It
is easy to see that the $t_{\alpha} t_{\alpha'}^*$ are matrix
units.
\end{proof}

\begin{lemma}\label{lem:beta-inv2}
For $k,n\in\N$ and $x,y\in E^0$ with $x \neq y$, we have
$\A_{k,n,x}\A_{k,n,y}\subset \A_{k,n,y}$ if there exists a path
in $F^*$ from $x$ to $y$, $\A_{k,n,x}\A_{k,n,y}\subset
\A_{k,n,x}$ if there exists a path in $F^*$ from $y$ to $x$,
and $\A_{k,n,x}\A_{k,n,y}=0$ otherwise.
\end{lemma}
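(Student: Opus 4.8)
The plan is to reduce the claim about the products $\A_{k,n,x}\A_{k,n,y}$ to the already-established multiplication rules for the range projections $U_xU_x^*$ from Lemma~\ref{lem:uxux*uyuy*}. A spanning element of $\A_{k,n,x}$ has the form $t_\alpha t_{\alpha'}^*$ with $\alpha,\alpha'\in E^*_{k,n,x}$, and similarly $t_\beta t_{\beta'}^*$ with $\beta,\beta'\in E^*_{k,n,y}$; a typical product is $t_\alpha t_{\alpha'}^* t_\beta t_{\beta'}^*$. The key observation is that $r_E(\alpha') = x$ and $r_E(\beta) = y$, so $t_{\alpha'}^* t_\beta$ involves the projections $q_x$ and $q_y$ at the junction. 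Since $\alpha'$ and $\beta$ both come from the unique-expression decomposition of Lemma~\ref{lem:unique expression} with the same parameter $k = l(\alpha') = l(\beta)$, I would compare their initial segments: either one is an extension of the other (in $F^*$-blocks) or they diverge at some edge, in which case the product is $0$.

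First I would recall that every element of $E^*_{k,n,x}$ ends with a (possibly trivial) $F^*$-segment $g_k$ landing at $x$, and that $t_{\alpha'}^* t_\beta \ne 0$ forces $\alpha'$ and $\beta$ to agree up to the point where one runs out; by Lemma~\ref{lem:paths in F} (uniqueness of $F$-paths by source and range) and the structure of the $\edge(n_i,x_i)$ edges, non-vanishing of $t_{\alpha'}^*t_\beta$ means one of $\alpha', \beta$ is an initial subpath of the other, say $\beta = \alpha' \gamma$ with $\gamma$ a path from $x$ to $y$. But then $\gamma$ must lie entirely in $F^*$ (it cannot contain an edge $\edge(n_i,x_i)$, since $l(\beta) = l(\alpha') = k$ already counts all such edges in $\alpha'$), so $\gamma$ is a path in $F$ from $x$ to $y$. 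Symmetrically, if $\alpha'$ extends $\beta$ we get a path in $F$ from $y$ to $x$. In the remaining case $t_{\alpha'}^* t_\beta = 0$ always, giving $\A_{k,n,x}\A_{k,n,y} = 0$; note also that since $F$ has no return paths (Lemma~\ref{lem:paths in F}), at most one of the two extension cases can occur, so the trichotomy is genuinely exclusive.

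In the case where there is a path $\gamma$ in $F$ from $x$ to $y$, I would compute $t_\alpha t_{\alpha'}^* t_\beta t_{\beta'}^* = t_\alpha t_{\alpha'}^* t_{\alpha'} t_\gamma t_{\beta'}^* = t_\alpha t_\gamma t_{\beta'}^* = t_{\alpha\gamma} t_{\beta'}^*$, and check that $\alpha\gamma \in E^*_{k,n,y}$: its $l$-value is still $k$ (appending an $F^*$-path adds no $\edge$-edges), its $m$-value is unchanged hence $\le n$, and its range is $r_E(\gamma) = y$. Since $\beta' \in E^*_{k,n,y}$ as well, $t_{\alpha\gamma}t_{\beta'}^* \in \A_{k,n,y}$, so $\A_{k,n,x}\A_{k,n,y} \subset \A_{k,n,y}$. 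The symmetric computation handles a path from $y$ to $x$. Spanning-set linearity then upgrades these inclusions from single products to all of $\A_{k,n,x}\A_{k,n,y}$.

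The main obstacle, such as it is, will be bookkeeping around the $l$-invariant: I must be careful that appending the $F^*$-path $\gamma$ to $\alpha$ really does not change $l(\alpha\gamma)$, and that the unique-expression normal form of $\alpha\gamma$ is obtained simply by absorbing $\gamma$ into the final block $g_k$ of $\alpha$ (which is legitimate precisely because $\gamma \in F^*$ and $F^*$ is closed under the concatenation in question). I would also want to double-check that $r_E(\alpha) = r_E(\alpha')$ is not needed here — what matters is $r_E(\alpha') = r_E(\beta)$ at the gluing point, together with $r_E(\alpha)$ and $r_E(\beta')$ being arbitrary, which is automatically handled since $\alpha, \beta' \in E^*_{k,n,x}, E^*_{k,n,y}$ respectively. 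None of this is deep; the real content was already done in Lemma~\ref{lem:uxux*uyuy*} and Lemma~\ref{lem:paths in F}, and this lemma is essentially a matrix-unit-level restatement of those facts.
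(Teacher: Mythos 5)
Your proposal is correct and follows essentially the same route as the paper's proof: both reduce to the standard product formula $t_\alpha t_{\alpha'}^* t_\beta t_{\beta'}^*$, use $l(\alpha')=l(\beta)=k$ to force the connecting path into $F^*$, verify that $\alpha\gamma$ stays in $E^*_{k,n,y}$, and invoke the absence of return paths in $F$ (Lemma~\ref{lem:paths in F}) to settle the remaining and exclusivity cases. No gaps.
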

\begin{proof}
We begin by recalling that for any $\alpha$, $\alpha'$,
$\beta$, and $\beta'$ in $E^*$, the Cuntz-Krieger relations
imply that
\begin{equation}\label{eq:monomial product}
t_\alpha t^*_{\alpha'} t_\beta t^*_{\beta'} =
\begin{cases}
t_{\alpha\mu} t^*_{\beta'}  &\text{ if $\beta = \alpha'\mu$ for some $\mu \in E^*$}\\
t_{\alpha} t^*_{\beta'\nu} &\text{ if $\alpha' = \beta\nu$ for some $\nu \in E^*$}\\
0 &\text{ otherwise}.
\end{cases}
\end{equation}
Suppose that $\alpha' \in E^*_{k,n,x}$ and $\beta \in
E^*_{k,n,y}$ satisfy $\beta = \alpha'\mu$ for some $\mu \in
E^*$. We claim that $\mu$ is a path in $F^*$ from $x$ to $y$,
and that $\alpha\mu \in E^*_{k,n,y}$. Since $r(\alpha') = x$ and
$r(\beta) = y$, $\mu$ is a path from $x$ to $y$. Since
$l(\alpha') = l(\beta)$, Lemma~\ref{lem:unique expression} and
the definition of $l$ imply that $\mu \in F^*$. Hence $l(\alpha\mu) = l(\alpha) = k$.
Since every edge in $\alpha\mu$ is an edge in $\alpha$ or an
edge in $\beta$, we also have $m(\alpha\mu) \le
\max\{m(\alpha), m(\beta)\} \le n$. Since $r(\alpha\mu) =
r(\mu) = r(\beta) = y$, it follows that $\alpha\mu \in
E^*_{k,n,y}$ as claimed.

A symmetric argument now shows that if $\alpha' \in
E^*_{k,n,x}$ and $\beta \in E^*_{k,n,y}$ satisfy $\alpha' =
\beta\nu$ for some $\nu \in E^*$, then $\nu$ is a path in $F^*$
from $y$ to $x$ and $\beta'\nu \in E^*_{k,n,y}$.

By Lemma~\ref{lem:paths in F}, $F^*$ contains no return paths.
Since $x \not= y$, it follows that there cannot exist paths
$\mu,\nu \in F^*$ such that $\mu$ is a path from $x$ to $y$ and
$\nu$ is a path from $y$ to $x$. Combining this with the
preceding paragraphs and with~\eqref{eq:monomial product}
proves the result.
\end{proof}

\begin{lemma}\label{lem:center}
Let $\A_0$ and $\A'$ be finite-dimensional \Csas of
a $C^*$-algebra such that $\A_0 \A' \subset \A_0$. Then
$\A:=\A_0+\A'$ is a finite-dimensional \Ca whose center is
contained in the \Ca generated by the center of $\A_0$ and the
center of $\A'$.
\end{lemma}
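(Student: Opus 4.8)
The plan is to first verify that $\A := \A_0 + \A'$ really is a finite-dimensional $C^*$-algebra in which $\A_0$ is a two-sided ideal, and then to read off the centre (I write $Z(\cdot)$ for centres throughout) from the splitting of $\A$ along the unit of $\A_0$.

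First I would take adjoints in the hypothesis $\A_0\A'\subseteq\A_0$: since $\A_0^*=\A_0$ and $\A'^*=\A'$, this gives $\A'\A_0\subseteq\A_0$. Together with $\A_0\A_0\subseteq\A_0$ and $\A'\A'\subseteq\A'$ it follows that $\A=\A_0+\A'$ is closed under products and under adjoints; being a finite-dimensional subspace it is also norm-closed, so $\A$ is a finite-dimensional $C^*$-algebra. The same inclusions give $\A\A_0\subseteq\A_0$ and $\A_0\A\subseteq\A_0$, so $\A_0$ is a closed, self-adjoint, two-sided ideal of $\A$. Next I put $e:=1_{\A_0}$. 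For $a\in\A$ both $ea$ and $ae$ lie in $\A_0$, so $eae=ea$ and $eae=ae$, whence $ea=ae$; thus $e$ is a central projection of $\A$.

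I would then decompose $\A$ along $e$. Since $\A_0=e\A$ (because $e=1_{\A_0}$ and $\A_0$ is an ideal) and $e$ is central, $\A=\A_0\oplus\B$ as $C^*$-algebras, where $\B:=(1-e)\A$; note $\B=(1-e)\A'$ since $(1-e)\A_0=0$. The quotient map $\A\to\A/\A_0$ is realised as $\pi\colon\A\to\B$, $\pi(a)=(1-e)a=a-ea$, a surjective $*$-homomorphism with kernel $\A_0$. Because $\A_0+\A'=\A$ and $\ker\pi=\A_0$, we get $\pi(\A')=\pi(\A)=\B$, so $\pi$ restricts to a surjective $*$-homomorphism $\A'\to\B$ of finite-dimensional $C^*$-algebras; decomposing $\A'$ as $\ker(\pi|_{\A'})$ plus a complementary ideal (isomorphic under $\pi$ to $\B$) shows that $\pi(Z(\A'))=Z(\B)$.

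To conclude, I would use $Z(\A)=Z(\A_0)\oplus Z(\B)$, which follows from $\A=\A_0\oplus\B$. Given $z\in Z(\A)$, write $z=z_0+z_1$ with $z_0\in Z(\A_0)$ and $z_1\in Z(\B)$, and pick $w\in Z(\A')$ with $z_1=\pi(w)=w-ew$. Then $z=z_0+w-ew$, and since $z_0,e\in Z(\A_0)$ and $w\in Z(\A')$, each summand, hence $z$ itself, lies in the $C^*$-subalgebra generated by $Z(\A_0)\cup Z(\A')$, as required. I expect the only real subtlety to be bookkeeping: correctly identifying the quotient $\A/\A_0$ with $(1-e)\A=(1-e)\A'$, and noticing that the statement genuinely needs the generated $C^*$-algebra rather than the linear span of the two centres, because the $\B$-component of a central element of $\A$ is produced from a $w\in Z(\A')$ only after the correction $w\mapsto w-ew$, which involves the product $ew$ of an element of $Z(\A_0)$ with one of $Z(\A')$.
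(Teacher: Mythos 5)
Your proof is correct and follows essentially the same route as the paper's: both split $\A$ along the unit $e=p_0$ of the ideal $\A_0$, observe that $x\mapsto(1-e)x$ restricts to a surjection of $\A'$ onto $(1-e)\A$ (hence of $Z(\A')$ onto $Z((1-e)\A)$), and then recover $Z(\A)=Z(\A_0)\oplus Z((1-e)\A)$ inside the $C^*$-algebra generated by the two centres, using that $e\in Z(\A_0)$. Your write-up merely makes explicit a few steps the paper leaves implicit (self-adjointness of $\A$, centrality of $e$, and that surjections of finite-dimensional $C^*$-algebras carry centres onto centres).
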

\begin{proof}
It is clear that $\A:=\A_0+\A'$ is finite dimensional.
It is easy to check that $\A$ is a \Ca
and $\A_0 \subset \A$ is an ideal.
Since $\A_0$ has a unit $p_0$,
$\A$ is the direct sum of $\A_0$
and the \Csa $(1-p_0)\A \subset \A$
where $1$ is the unit of $\A$.
Since $\A=\A_0+\A'$,
the \shom $\A' \ni x \mapsto (1-p_0)x \in (1-p_0)\A$ is
a surjection between finite-dimensional \CA s.
Thus its restriction to the center of $\A'$ is
a surjection onto the center of $(1-p_0)\A$.
This implies that the center of $(1-p_0)\A$
is contained in the \Ca generated
by the center of $\A_0$ and the center of $\A'$
because $p_0$ is in the center of $\A_0$.
Since the center of $\A$ is the direct sum of the center of $\A_0$
and the center of $(1-p_0)\A$,
it is contained in the \Ca generated by the center of $\A_0$ and the
center of $\A'$.
\end{proof}

\begin{lemma}\label{lem:beta-inv3}
Let $k,n\in\N$ and let $\lambda$ be a finite subset $E^0$. Then
$\A_{k,n,\lambda}:=\sum_{x\in \lambda}\A_{k,n,x}$ is a
finite-dimensional \Ca whose center is contained in
$C^*(E)^{\circ}$.
\end{lemma}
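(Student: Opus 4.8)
The plan is to argue by induction on $|\lambda|$, adjoining one of the matrix algebras $\A_{k,n,x}$ at a time and applying Lemma~\ref{lem:center} at each step. When $|\lambda|\le 1$ the claim is immediate: $\A_{k,n,\emptyset}=\{0\}$, and for $\lambda=\{x\}$ Lemma~\ref{lem:beta-inv1} shows $\A_{k,n,x}$ is a finite-dimensional \Ca whose center is $\{0\}$ if $E^*_{k,n,x}=\emptyset$, and is otherwise spanned by the projection $\sum_{\alpha\in E^*_{k,n,x}}t_\alpha t_\alpha^*$, which lies in $C^*(E)^{\circ}$ by the definition of $C^*(E)^{\circ}$.

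For the inductive step I would first note that the relation ``$x\le y$ if and only if $x=y$ or there is a path of nonzero length in $F$ from $x$ to $y$'' is a partial order on $E^0$: transitivity is concatenation of paths, and antisymmetry holds because $F$ has no return paths by Lemma~\ref{lem:paths in F}. As $\lambda$ is finite and nonempty it has a $\le$-minimal element $y$, so that no $x\in\lambda\setminus\{y\}$ admits a path in $F$ to $y$. Setting $\A':=\A_{k,n,y}$ and $\A_0:=\A_{k,n,\lambda\setminus\{y\}}=\sum_{x\in\lambda\setminus\{y\}}\A_{k,n,x}$, the minimality of $y$ rules out, for each $x\in\lambda\setminus\{y\}$, a path from $x$ to $y$; hence Lemma~\ref{lem:beta-inv2} gives $\A_{k,n,x}\A_{k,n,y}\subseteq\A_{k,n,x}\subseteq\A_0$ if there is a path from $y$ to $x$, and $\A_{k,n,x}\A_{k,n,y}=0$ otherwise. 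Summing over $x$ yields $\A_0\A'\subseteq\A_0$. By the inductive hypothesis $\A_0$ is a finite-dimensional \Ca with center contained in $C^*(E)^{\circ}$, and by the base case $\A'$ is a finite-dimensional \Ca with center contained in $C^*(E)^{\circ}$; so Lemma~\ref{lem:center} shows that $\A_{k,n,\lambda}=\A_0+\A'$ is a finite-dimensional \Ca whose center is contained in the \Ca generated by the centers of $\A_0$ and $\A'$. Since $C^*(E)^{\circ}$ is a \Csa of $C^*(E)$, this generated algebra lies in $C^*(E)^{\circ}$, which closes the induction.

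The one point requiring care is the choice of $y$: it is precisely the $\le$-minimality of $y$ that produces the one-sided containment $\A_0\A'\subseteq\A_0$ (as opposed to $\A_0\A'\subseteq\A'$) needed to apply Lemma~\ref{lem:center}, and a non-minimal choice would break the argument. Everything else — distributing products over the finite sums defining the $\A_{k,n,\lambda}$, and inheriting finite-dimensionality — is routine and already handled by Lemmas~\ref{lem:beta-inv1}, \ref{lem:beta-inv2}, and \ref{lem:center}.
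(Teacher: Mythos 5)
Your argument is correct and is essentially the paper's proof: induction on $|\lambda|$, extracting an extremal vertex of $\lambda$ with respect to reachability in $F$ (using via Lemma~\ref{lem:paths in F} that $F$ has no return paths) so that Lemma~\ref{lem:beta-inv2} yields the one-sided containment required by Lemma~\ref{lem:center}. The only, immaterial, difference is that the paper removes a vertex $x_0$ emitting no $F$-path into the rest of $\lambda$ and takes the singleton algebra $\A_{k,n,x_0}$ as the ideal $\A_0$, whereas you remove a vertex receiving no $F$-path from the rest and make the complementary sum the ideal; both choices satisfy the hypotheses of Lemma~\ref{lem:center}.
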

\begin{proof}
The proof proceeds by induction on $|\lambda|$. When
$|\lambda|=1$, this follows from Lemma \ref{lem:beta-inv1}.
Suppose the statement holds whenever $|\lambda|=m$. Fix a
finite subset $\lambda \subset E^0$ with $|\lambda|=m+1$. By
Lemma~\ref{lem:paths in F}, $F^*$ contains no return paths, so
there exists $x_0 \in \lambda$ such that there is no path in
$F^*$ from $x_0$ to any other vertex in $\lambda$. Let
$\lambda' = \lambda \setminus \{x_0\}$. Then
Lemma~\ref{lem:beta-inv2} implies that
$\A_{k,n,x_0}\A_{k,n,\lambda\setminus\{x_0\}}\subset
\A_{k,n,x_0}$. Hence $\A_{k,n,\lambda}$ is a finite-dimensional
\Ca whose center is contained in $C^*(E)^{\circ}$ by the
inductive hypothesis applied to $\lambda'$, and
Lemma~\ref{lem:center}.
\end{proof}

\begin{lemma}\label{lem:beta-inv4}
Let $\lambda_1, \lambda_2, \dots$ be an increasing sequence of
finite subsets of $E^0$ such that $\bigcup_{n=1}^\infty
\lambda_n=E^0$. For $n\in\N$ let $\A_n:=\sum_{k=1}^n
\A_{k,n,\lambda_n}$. Then $\A_1, \A_2, \dots$ is an increasing
sequence of finite-dimensional \CA s whose centers are
contained in $C^*(E)^{\circ}$, and the union
$\bigcup_{n=1}^\infty \A_n$ is dense in $C^*(E)^{\beta}$.
\end{lemma}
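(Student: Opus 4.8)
The plan is to realise each $\A_n$ as an iterated sum of the finite-dimensional algebras $\A_{k,n,\lambda_n}$ supplied by Lemma~\ref{lem:beta-inv3}, controlling the centre at each stage by Lemma~\ref{lem:center}. Throughout I read the index $k$ as ranging over $\{0,1,\dots,n\}$, where $E^*_{0,n,x}:=\{g\in F^* : r_E(g)=x\}$ is a finite set by the argument in the proof of Lemma~\ref{lem:beta-inv1}; the reasoning of Lemmas~\ref{lem:beta-inv1}--\ref{lem:beta-inv3} then applies verbatim to show that $\A_{0,n,\lambda}:=\spa\{t_g t_{g'}^* : g,g'\in F^*,\ r_E(g)=r_E(g')\in\lambda\}$ is a finite-dimensional \Ca with centre inside $C^*(E)^\circ$. (This $k=0$ term supplies the monomials $t_g t_{g'}^*$ with $g,g'\in F^*$; these do not occur for $k\ge1$ but are needed for the density assertion below.) Monotonicity is immediate: $E^*_{k,n,x}\subseteq E^*_{k,n+1,x}$ because the constraint $m(\alpha)\le n$ is weakened, and $\lambda_n\subseteq\lambda_{n+1}$, so $\A_{k,n,x}\subseteq\A_{k,n+1,x}\subseteq\A_{n+1}$ for each $k\le n$, whence $\A_n\subseteq\A_{n+1}$.

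The substantive point is how the summands multiply as $k$ varies: for $0\le k<k'\le n$ I claim $\A_{k,n,\lambda_n}\,\A_{k',n,\lambda_n}\subseteq\A_{k',n,\lambda_n}$. To see this, take monomials $t_\alpha t_{\alpha'}^*$ with $\alpha,\alpha'\in E^*_{k,n,x}$ and $t_\beta t_{\beta'}^*$ with $\beta,\beta'\in E^*_{k',n,y}$ and apply~\eqref{eq:monomial product}. The product vanishes unless $\beta=\alpha'\mu$ for some $\mu\in E^*$; the alternative $\alpha'=\beta\nu$ would give $k=l(\alpha')=l(\beta)+l(\nu)=k'+l(\nu)\ge k'$, contradicting $k<k'$. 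In the surviving case $l(\mu)=l(\beta)-l(\alpha')=k'-k$, so $l(\alpha\mu)=l(\alpha)+l(\mu)=k'$; moreover the edges of $\mu$ occur among those of $\beta$, so $m(\alpha\mu)\le\max\{m(\alpha),m(\beta)\}\le n$, and $r_E(\alpha\mu)=r_E(\mu)=r_E(\beta)=y$. Hence $\alpha\mu\in E^*_{k',n,y}$ and $t_\alpha t_{\alpha'}^* t_\beta t_{\beta'}^*=t_{\alpha\mu}t_{\beta'}^*\in\A_{k',n,\lambda_n}$. Taking adjoints gives $\A_{k',n,\lambda_n}\,\A_{k,n,\lambda_n}\subseteq\A_{k',n,\lambda_n}$ as well, so $\A_{k',n,\lambda_n}$ is a two-sided ideal in $\A_{k,n,\lambda_n}+\A_{k',n,\lambda_n}$.

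With this in hand I would assemble $\A_n$ by adding the pieces $\A_{k,n,\lambda_n}$ in order of \emph{decreasing} $k$. Begin with $\A_{n,n,\lambda_n}$, which is a finite-dimensional \Ca with centre in $C^*(E)^\circ$ by Lemma~\ref{lem:beta-inv3}. Assuming inductively that $\A_0:=\sum_{k=j+1}^n\A_{k,n,\lambda_n}$ is a finite-dimensional \Ca with centre in $C^*(E)^\circ$, the previous paragraph gives $\A_0\,\A_{j,n,\lambda_n}\subseteq\A_0$, so Lemma~\ref{lem:center} applies with $\A'=\A_{j,n,\lambda_n}$: the sum $\sum_{k=j}^n\A_{k,n,\lambda_n}=\A_0+\A_{j,n,\lambda_n}$ is a finite-dimensional \Ca whose centre lies in the \Ca generated by the centre of $\A_0$ and the centre of $\A_{j,n,\lambda_n}$. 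Since $C^*(E)^\circ$ is a \Csa of $C^*(E)$ and both of these centres lie in it, so does the centre of the sum. Running $j$ from $n$ down to $0$ shows that $\A_n=\sum_{k=0}^n\A_{k,n,\lambda_n}$ is a finite-dimensional \Ca whose centre is contained in $C^*(E)^\circ$.

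For density, recall that $C^*(E)^\beta=\cspa\{t_\alpha t_{\alpha'}^* : \alpha,\alpha'\in E^*,\ l(\alpha)=l(\alpha')\}$, so it suffices to place each such monomial in some $\A_n$. It vanishes unless $r_E(\alpha)=r_E(\alpha')=:x$; granting that, set $k:=l(\alpha)=l(\alpha')$ and choose $n$ large enough that $\alpha,\alpha'\in E^*_{k,n,x}$, $k\le n$, and $x\in\lambda_n$ --- possible since $\lambda_1\subseteq\lambda_2\subseteq\cdots$ exhausts $E^0$. Then $t_\alpha t_{\alpha'}^*\in\A_{k,n,x}\subseteq\A_n$, so $\bigcup_n\A_n$ contains the dense $*$-subalgebra $\spa\{t_\alpha t_{\alpha'}^* : l(\alpha)=l(\alpha')\}$ of $C^*(E)^\beta$, completing the proof. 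The one delicate ingredient is the multiplicative bookkeeping of the second paragraph --- in particular, verifying that the higher-$l$-weight pieces embed as ideals, which is precisely what allows the iteration of Lemma~\ref{lem:center} while keeping every centre inside $C^*(E)^\circ$; the remaining assertions are a direct unwinding of the definitions of $E^*_{k,n,x}$, $l(\cdot)$ and $m(\cdot)$.
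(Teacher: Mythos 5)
Your argument is correct and follows the same strategy as the paper's: establish the absorption relation between the summands $\A_{k,n,\lambda_n}$ for different values of $k$ via~\eqref{eq:monomial product}, iterate Lemma~\ref{lem:center} to control the centre, and verify density on spanning monomials. The one substantive point where you depart from the printed text is your insistence on including the $k=0$ summand $\A_{0,n,\lambda_n}=\spa\{t_gt_{g'}^*: g,g'\in F^*,\ r_E(g)=r_E(g')\in\lambda_n\}$, and you are right to do so: as written, $\A_n=\sum_{k=1}^n\A_{k,n,\lambda_n}$ contains no monomial $t_gt_{g'}^*$ with $l(g)=l(g')=0$, and such monomials need not lie in the closure of $\bigcup_n\A_n$. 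For instance, if $v\in W_0$ satisfies $s^{-1}(v)=\emptyset$ in $\G$, then $v$ emits no edge of $E$, so $q_va=0$ for every $a\in\sum_{k\ge1}\A_{k,n,\lambda_n}$ and hence $\|q_v-a\|\ge1$; yet $q_v\in C^*(E)^\beta$. So the density assertion genuinely requires the degree-zero part, and your extension of Lemmas~\ref{lem:beta-inv1}--\ref{lem:beta-inv3} to $k=0$ (where the orthogonality $t_g^*t_{g'}=0$ for distinct $g,g'\in E^*_{0,n,x}$ follows from the absence of return paths in $F$, Lemma~\ref{lem:paths in F}) is the correct repair; it leaves the later applications in Lemma~\ref{lem:ideal of C*(E)^b} and Proposition~\ref{prop:beta-inv} unaffected. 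Your remaining steps --- the monotonicity in $n$, the verification that $\A_{k,n,\lambda_n}\A_{k',n,\lambda_n}\subseteq\A_{k',n,\lambda_n}$ for $k<k'$, and the decreasing induction over $k$ --- match the paper's intended argument and are carried out correctly.
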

\begin{proof}
Equation~\ref{eq:monomial product} implies that
$\A_{k',n,\lambda_n}\A_{k,n,\lambda_n}\subset
\A_{k,n,\lambda_n}$ for $k'\leq k$. An argument similar to the
proof of Lemma~\ref{lem:beta-inv3} therefore shows that $\A_n$
is a finite-dimensional \Ca whose center is contained in
$C^*(E)^{\circ}$. By definition, $\{\A_n : n \in \N\}$ is
increasing. The union $\bigcup^\infty_{n=1} \A_n$ is dense in
$C^*(E)^\beta$ because it contains all the spanning elements.
\end{proof}

\begin{lemma}\label{lem:ideal of C*(E)^b}
Every ideal $I$ of $C^*(E)^{\beta}$ is generated as an ideal by
$I\cap C^*(E)^{\circ}$.
\end{lemma}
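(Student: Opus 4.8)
The plan is to exploit the AF structure of $C^*(E)^\beta$ established in Lemma~\ref{lem:beta-inv4}: we have $C^*(E)^\beta = \overline{\bigcup_{n} \A_n}$ with $\A_1 \subseteq \A_2 \subseteq \cdots$ finite dimensional and each centre $Z(\A_n)$ contained in $C^*(E)^\circ$. First I would record the elementary fact that an ideal $J$ of a finite-dimensional \Ca $\A$ is a direct summand: it has a unit $z$, which is a projection in $Z(\A)$, and $J = z\A$. In particular $J$ is generated as an ideal of $\A$ by the single projection $z \in J \cap Z(\A)$.

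Next I would prove the standard ``local'' description of ideals in a direct limit, namely $I = \overline{\bigcup_{n}(I \cap \A_n)}$ for the given ideal $I$ of $C^*(E)^\beta$. Let $q \colon C^*(E)^\beta \to C^*(E)^\beta / I$ be the quotient map. Given $a \in I$ and $\varepsilon > 0$, density of $\bigcup_n \A_n$ gives some $n$ and some $b \in \A_n$ with $\|a - b\| < \varepsilon$. Since $\A_n$ is finite dimensional, $I \cap \A_n$ is a direct summand of $\A_n$, so write $b = b_0 + b_1$ with $b_0 \in I \cap \A_n$ and $b_1$ in the complementary summand; then $q|_{\A_n}$ is, up to the canonical isomorphism onto its image, the projection $b \mapsto b_1$, so $\|b_1\| = \|q(b)\| = \|q(b-a)\| \le \|b-a\| < \varepsilon$, whence $\|a - b_0\| < 2\varepsilon$ with $b_0 \in I \cap \A_n$. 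This proves $I \subseteq \overline{\bigcup_n (I \cap \A_n)}$, and the reverse inclusion is clear since $I \cap \A_n \subseteq I$ and $I$ is closed.

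Finally, for each $n$ the ideal $I \cap \A_n$ of the finite-dimensional algebra $\A_n$ equals, by the first step, $z_n \A_n$ for a projection $z_n \in Z(\A_n) \cap I$; since $Z(\A_n) \subseteq C^*(E)^\circ$ by Lemma~\ref{lem:beta-inv4}, we get $z_n \in I \cap C^*(E)^\circ$, so $I \cap \A_n = z_n \A_n$ lies in the closed two-sided ideal $\mathcal{J}$ of $C^*(E)^\beta$ generated by $I \cap C^*(E)^\circ$. Thus $\bigcup_n (I \cap \A_n) \subseteq \mathcal{J}$, and taking closures together with the previous paragraph gives $I \subseteq \mathcal{J}$; as $I \cap C^*(E)^\circ \subseteq I$ we also have $\mathcal{J} \subseteq I$, so $I = \mathcal{J}$. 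The only point requiring any care is the local description $I = \overline{\bigcup_n (I \cap \A_n)}$ — the familiar fact that ideals of an AF-algebra are controlled by their intersections with the defining finite-dimensional subalgebras — while the remainder is bookkeeping with central projections.
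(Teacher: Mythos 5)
Your proposal is correct and follows essentially the same route as the paper: use the increasing union $C^*(E)^\beta = \overline{\bigcup_n \A_n}$ from Lemma~\ref{lem:beta-inv4}, reduce to the finite-dimensional pieces via the standard AF-ideal fact $I = \overline{\bigcup_n (I\cap\A_n)}$, and then observe that each $I\cap\A_n$ is generated by a central projection of $\A_n$, which lies in $I\cap C^*(E)^\circ$ because $Z(\A_n)\subset C^*(E)^\circ$. The paper's proof is just a terser version of exactly this argument, leaving the local description of AF ideals and the central-projection step implicit.
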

\begin{proof}
Let $\lambda_1, \lambda_2, \dots$ and $\A_1, \A_2, \dots$ be as
in Lemma~\ref{lem:beta-inv4}. Then $I$ is generated as an ideal
by $\bigcup_{n=1}^\infty I\cap \A_n$. For each $n$, the algebra
$C^*(E)^{\circ}$ contains the center of the finite-dimensional
\Ca $\A_n$, so $I\cap \A_n$ is generated as an ideal by $I\cap
\A_n\cap C^*(E)^{\circ}$. Hence $I$ is generated as an ideal by
$I\cap C^*(E)^{\circ}$.
\end{proof}

\begin{proposition}\label{prop:beta-inv}
Let $I$ be an ideal of $C^*(E)$. Then $I$ is $\beta$-invariant
if and only if $I$ is generated as an ideal by $I\cap
C^*(E)^{\circ}$.
\end{proposition}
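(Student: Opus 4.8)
The plan is to prove both implications; the reverse one is easy, and the forward one rests on a standard $C^*$-dynamical fact together with Lemma~\ref{lem:ideal of C*(E)^b}. For the direction ``$I$ generated by $I\cap C^*(E)^\circ$ $\Rightarrow$ $I$ is $\beta$-invariant'': since $\beta_z(t_\alpha)=z^{l(\alpha)}t_\alpha$, every spanning element $t_\alpha t_\alpha^*$ of $C^*(E)^\circ$ is fixed by every $\beta_z$, so $C^*(E)^\circ\subseteq C^*(E)^\beta$ and in particular $\beta_z(I\cap C^*(E)^\circ)=I\cap C^*(E)^\circ$ for all $z\in\T$. Since each $\beta_z$ is an automorphism of $C^*(E)$, it carries the ideal generated by $I\cap C^*(E)^\circ$ onto the ideal generated by $\beta_z(I\cap C^*(E)^\circ)=I\cap C^*(E)^\circ$; that is, $\beta_z(I)=I$.

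For the converse, assume $I$ is $\beta$-invariant. The plan is to first show that $I$ is generated as an ideal of $C^*(E)$ by $I\cap C^*(E)^\beta$, and then to push this down to $C^*(E)^\circ$ using Lemma~\ref{lem:ideal of C*(E)^b}. Let $\Phi\colon C^*(E)\to C^*(E)^\beta$ be the faithful conditional expectation $\Phi(a)=\int_\T\beta_z(a)\,dz$. For $a\in I$ with $a\geq 0$, the element $\Phi(a)$ is a norm limit of convex combinations of the $\beta_z(a)\in I$, so $\Phi(a)\in I\cap C^*(E)^\beta$. Let $K$ be the closed ideal of $C^*(E)$ generated by $\Phi(a)$; I claim $a\in K$. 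If not, then since $a\geq 0$ and $a\notin K$ there is a state $\psi$ of $C^*(E)$ with $\psi|_K=0$ and $\psi(a)>0$; but then $z\mapsto\psi(\beta_z(a))$ is a continuous, nonnegative function on $\T$ whose integral is $\psi(\Phi(a))=0$, forcing $\psi(\beta_z(a))=0$ for every $z$, and in particular $\psi(a)=0$ at $z=1$, a contradiction. Hence every positive element of $I$ lies in the ideal generated by $I\cap C^*(E)^\beta$, and therefore so does all of $I$.

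It remains to replace $C^*(E)^\beta$ by $C^*(E)^\circ$. Applying Lemma~\ref{lem:ideal of C*(E)^b} to the ideal $I\cap C^*(E)^\beta$ of $C^*(E)^\beta$, this ideal is generated as an ideal of $C^*(E)^\beta$ by its intersection with $C^*(E)^\circ$, which, since $C^*(E)^\circ\subseteq C^*(E)^\beta$, is exactly $I\cap C^*(E)^\circ$. Consequently the ideal of $C^*(E)$ generated by $I\cap C^*(E)^\beta$ coincides with the ideal of $C^*(E)$ generated by $I\cap C^*(E)^\circ$ (enlarging the ambient algebra from $C^*(E)^\beta$ to $C^*(E)$ absorbs the extra factors coming from $C^*(E)^\beta$), and by the previous paragraph both equal $I$. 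The step needing the most care is the first half of the converse --- the general principle that a $\beta$-invariant ideal is recovered as the ideal generated by its intersection with the fixed-point algebra --- where faithfulness of $\Phi$, via the continuity and positivity of $z\mapsto\psi(\beta_z(a))$, is the essential input; once that is in place the passage to $C^*(E)^\circ$ via Lemma~\ref{lem:ideal of C*(E)^b} is routine.
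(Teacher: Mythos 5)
Your proposal is correct, and its overall architecture is exactly that of the paper: the easy direction follows because $C^*(E)^\circ$ lies in the fixed-point algebra, and the converse is obtained by first showing that a $\beta$-invariant ideal $I$ is generated by $I\cap C^*(E)^\beta$ and then invoking Lemma~\ref{lem:ideal of C*(E)^b} to descend from $C^*(E)^\beta$ to $C^*(E)^\circ$. The one place where you genuinely diverge is in the proof of the intermediate fact that a $\beta$-invariant ideal is recovered from its intersection with the fixed-point algebra: the paper isolates this as Lemma~\ref{lem:inv ideal generators} and proves it by passing to the quotient, using that the averaged conditional expectation on the quotient algebra is faithful and compatible with $\Phi$; you instead argue by contradiction with a state $\psi$ annihilating the ideal generated by $\Phi(a)$, exploiting continuity and nonnegativity of $z\mapsto\psi(\beta_z(a))$ together with $\int_{\T}\psi(\beta_z(a))\,dz=\psi(\Phi(a))=0$. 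Both arguments are standard and complete; yours has the mild advantage of avoiding any discussion of the induced action and expectation on the quotient (and of the faithfulness of the latter), at the cost of a Hahn--Banach separation step. Your final observation --- that a generating set for $I\cap C^*(E)^\beta$ as an ideal of $C^*(E)^\beta$ also generates, in $C^*(E)$, the same ideal that $I\cap C^*(E)^\beta$ does --- is stated a little informally but is correct and is implicitly used in the paper as well.
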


To prove the proposition, we first present a well-known
technical lemma, an exact statement of which we have found
difficult to locate in the literature.

\begin{lemma}\label{lem:inv ideal generators}
Let $A$ be a $C^*$-algebra and let $\beta$ be a strongly
continuous action of $\T$ by automorphisms of $A$. An ideal $I$
of $A$ is $\beta$-invariant if and only if it is generated as
an ideal by $I \cap A^\beta$.
\end{lemma}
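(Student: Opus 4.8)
The plan is to prove both implications directly from the structure of the fixed-point algebra and the conditional expectation $\Phi : A \to A^\beta$ given by $\Phi(a) = \int_{\T} \beta_z(a)\,dz$, which is a faithful contractive positive linear map.

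First I would establish the easy direction: if $I$ is generated as an ideal by $I \cap A^\beta$, then $I$ is $\beta$-invariant. Indeed, $\beta_z(I \cap A^\beta) = I \cap A^\beta$ for all $z \in \T$ since each $\beta_z$ fixes $A^\beta$ pointwise and maps $I$ onto $\beta_z(I)$; but then $\beta_z(I)$ is the ideal generated by $\beta_z(I \cap A^\beta) = I \cap A^\beta$, which is $I$ itself. Hence $\beta_z(I) = I$ for all $z$, so $I$ is $\beta$-invariant.

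For the converse, suppose $I$ is $\beta$-invariant. The key claim is that $\Phi$ maps $I$ into $I$, i.e.\ $\Phi(I) \subseteq I \cap A^\beta$. This follows because for $a \in I$ the integrand $z \mapsto \beta_z(a)$ lies in $I$ (by invariance), $I$ is closed, and the Bochner/Riemann integral of a continuous $I$-valued function lies in the closed subspace $I$; clearly $\Phi(a) \in A^\beta$. Let $J$ be the ideal of $A$ generated by $I \cap A^\beta$; then $J \subseteq I$ trivially, and it remains to show $I \subseteq J$. Take $a \in I$ with $a \ge 0$. I would approximate $a$ using the Fejér-type averages: for $n \ge 1$ set $\Phi_n(a) := \int_{\T} K_n(z)\,\beta_z(a)\,dz$ where $K_n$ is the Fejér kernel on $\T$, so that $\Phi_n(a) \to a$ in norm as $n \to \infty$ (this is the standard fact that the Cesàro means of the "Fourier series" $\sum_k \Phi^{(k)}(a)$ of an element of a circle-action $C^*$-algebra converge to the element; see e.g.\ the argument in \cite[Corollary~3.3]{Raeburn2005} or \cite{Exel-circle}). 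Each $\Phi_n(a)$ is a finite linear combination of the spectral components $\Phi^{(k)}(a)$, and each $\Phi^{(k)}(a)$ can be written as $\Phi^{(k)}(a) = \Phi(a\, u_k)$ (suitably interpreted) — more concretely, the degree-$k$ component satisfies $\Phi^{(k)}(a)^*\Phi^{(k)}(a) \le \|a\|\,\Phi((a^*a)) \in I \cap A^\beta$ after multiplying by the appropriate gauge-shifting element, forcing $\Phi^{(k)}(a) \in J$. Since $J$ is closed and $\Phi_n(a) \in J$ with $\Phi_n(a) \to a$, we conclude $a \in J$. As positive elements span $I$, this gives $I \subseteq J$, completing the proof.

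I expect the main obstacle to be making the step "$\Phi^{(k)}(a) \in J$" precise without assuming $A$ is unital. The clean way around this is to work in the multiplier algebra $\mathcal{M}(A)$ — the action $\beta$ extends to a (possibly only strictly continuous) action on $\mathcal{M}(A)$, but the spectral subspace elements $\Phi^{(k)}(a)$ for $a \in A$ still lie in $A$ — and to note that $\Phi^{(k)}(a)^* \Phi^{(k)}(a)$ is a positive element of $A^\beta$ which is dominated in the hereditary-subalgebra sense by $\Phi(a^*a) \in I \cap A^\beta$, hence lies in the closed ideal of $A^\beta$ generated by $I \cap A^\beta$, and therefore $\Phi^{(k)}(a) \in \overline{A\,(I \cap A^\beta)\,A} = J$. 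Alternatively, one can simply cite this lemma from the literature; it is folklore and appears in essentially this form in treatments of gauge actions on graph and Cuntz–Pimsner algebras. Since the paper describes it as "well-known," a short self-contained proof along the lines above, with a pointer to \cite{Raeburn2005} for the convergence of Fejér means, should suffice.
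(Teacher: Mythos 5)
Your proof is correct in outline, but it takes a genuinely different and substantially longer route than the paper. The paper never decomposes elements into spectral components: it simply lets $J$ be the ideal generated by $I \cap A^\beta$, observes that $\beta$ descends to the quotient by $J$ with a faithful conditional expectation $\widetilde{\Phi}$ satisfying $\widetilde{\Phi}(a + J) = \Phi(a) + J$, and then for $a \in I$ notes that $\Phi(a^*a) \in I \cap A^\beta \subset J$, so $\widetilde{\Phi}(a^*a + J) = 0$; faithfulness and the $C^*$-identity give $a \in J$. That argument uses only the zeroth Fourier coefficient and is a few lines long. Your Fej\'er-kernel approach works and proves something slightly stronger (that $I$ is the closed span of its spectral subspaces $I^{(k)}$), but the middle step as you first state it is garbled: there is no ``gauge-shifting element'' in a general $C^*$-algebra with a circle action, and the inequality $\Phi^{(k)}(a)^*\Phi^{(k)}(a) \le \|a\|\Phi(a^*a)$ is an unnecessary detour. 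The clean version, which you essentially arrive at in your last paragraph, is even simpler than you suggest: for $a \in I$ the integrand defining $\Phi^{(k)}(a)$ lies in the closed subspace $I$ by invariance, so $b := \Phi^{(k)}(a) \in I$; since $\beta_z(b) = z^k b$, the element $b^*b$ lies in $I \cap A^\beta \subset J$ directly, and then $b = \lim_n b\,(b^*b)^{1/n} \in J$ because closed ideals are hereditary. With that repair your argument is complete; the trade-off is that you invoke norm-convergence of Ces\`aro means, which the paper's quotient trick avoids entirely.
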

\begin{proof}
If $I$ is generated as an ideal by $I \cap A^\beta$, then it is
clearly $\beta$-invariant.

Now suppose that $I$ is $\beta$-invariant. Then $I \cap A^\beta
= I^\beta$. Moreover, $\beta$ descends to an action
$\widetilde{\beta}$ of $\T$ on $A/I$, and averaging over
$\beta$ and $\widetilde{\beta}$ gives faithful conditional
expectations $\Phi : A \to A^\beta$ and $\widetilde{\Phi} : A/I
\to (A/I)^{\widetilde{\beta}}$ such that $\widetilde{\Phi}(a +
I) = \Phi(a) + I$.

Let $J \subset I$ be the ideal of $A$ generated by $I^\beta$;
we must show that $J = I$. Fix $a \in J$. Then $a^*a \in J$, so
$\Phi(a^*a) \in J^\beta = I^\beta$. Thus $\widetilde{\Phi}(a^*a
+ I) = \Phi(a^*a) + I = 0_{A/I}$ since $I^\beta \subset I$.
Since $\widetilde{\Phi}$ is faithful, $a^*a + I = 0_{A/I}$, so
the $C^*$-identity implies $a + I = 0_{A/I}$, and $a \in I$.
\end{proof}

\begin{proof}[Proof of Proposition~\ref{prop:beta-inv}]
Since elements in $C^*(E)^{\circ}$ are fixed by $\beta$, if $I$
is generated by $I \cap C^*(E)^{\circ}$, then $I$ is
$\beta$-invariant. Conversely suppose that $I$ is
$\beta$-invariant. Then Lemma~\ref{lem:inv ideal generators}
shows that $I$ is generated as an ideal of $C^*(E)$ by $I\cap
C^*(E)^{\beta}$, and Lemma~\ref{lem:ideal of C*(E)^b} implies
that $I\cap C^*(E)^{\beta}$ is generated as an ideal of
$C^*(E)^\beta$ by $I\cap C^*(E)^{\circ}$.
\end{proof}

The following proposition holds for a general graph $E$.

\begin{proposition}\label{prop:gauge-inv}
An ideal $I$ of $C^*(E)$ is gauge invariant if and only if $I$
is generated by $I\cap C^*(E)^{\circ}$.
\end{proposition}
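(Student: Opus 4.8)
The plan is to deduce this from Proposition~\ref{prop:beta-inv} by regarding the given graph $E$ as a (degenerate) ultragraph. Precisely, view $E = (E^0, E^1, r_E, s_E)$ as an ultragraph $\G = (G^0, \G^1, r, s)$ with $G^0 = E^0$, $\G^1 = E^1$, $s = s_E$ and $r(e) = \{r_E(e)\}$ for each $e \in E^1$, and fix any listing of $\G^1$. Since each $r(e)$ is a singleton, and in particular finite, Remark~\ref{rmk:ultragraph=graph} (the implication (v)$\Rightarrow$(iv), or a one-line direct check) gives $\Delta = \emptyset$, whence $W_+ = \Gamma_0 = \Gamma_+ = \emptyset$, $W_0 = G^0$, and $\sigma$ is the function sending every vertex to $\emptyset$. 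Running the construction of Section~\ref{graph-sec} on this data, one finds $\Xset{n} = r(e_n)$ is a singleton for each $n$, so $F^1 = \emptyset$ and the graph built from $\G$ in Definition~\ref{dfn:E} is canonically identified with $E$ itself, the edge $\edge(n,x)$ corresponding to $e_n$. This is exactly the situation described in the last paragraph of Remark~\ref{rmk:ultragraph=graph}.

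The crucial observation is then that, under this identification, the action $\beta$ of Proposition~\ref{prop:phi} is the gauge action on $C^*(E)$. Indeed, since $W_+ \sqcup \Gamma_+ = \emptyset$ there are no edges of the form $\overline{x}$, so $F^* = E^0$, every $\alpha \in E^*$ satisfies $l(\alpha) = |\alpha|$, and $\beta_z$ multiplies each generating partial isometry by $z$; thus $\beta = \gamma$ and in particular $C^*(E)^\beta = C^*(E)^\gamma$. Moreover $C^*(E)^\circ = \cspa\{t_\alpha t_\alpha^* : \alpha \in E^*\}$ is the same subalgebra regardless of whether $E$ is regarded as a graph or as this ultragraph. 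Hence Proposition~\ref{prop:beta-inv}, applied with $\G = E$, states precisely that an ideal $I$ of $C^*(E)$ is gauge invariant if and only if it is generated by $I \cap C^*(E)^\circ$, which is the assertion.

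There is no genuine obstacle here beyond this translation: the substantive content — that $C^*(E)^\beta$ is an increasing union of finite-dimensional $C^*$-algebras whose centres lie in $C^*(E)^\circ$, together with the general principle (Lemma~\ref{lem:inv ideal generators}) that invariance of an ideal is equivalent to its being generated by its intersection with the fixed-point algebra — was already proved in Lemmas~\ref{lem:beta-inv1}--\ref{lem:ideal of C*(E)^b} and Proposition~\ref{prop:beta-inv}. One could instead give a self-contained argument for an arbitrary graph $E$ along identical lines, using the well-known AF structure of $C^*(E)^\gamma$ with its canonical diagonal, but the reduction above avoids repeating it. The only mildly delicate point is the verification in the first paragraph that Definition~\ref{dfn:E} returns the original graph $E$ when fed this ultragraph, and that $\beta$ thereby becomes the gauge action; both are routine given Remark~\ref{rmk:ultragraph=graph}.
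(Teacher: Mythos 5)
Your proof is correct, but it takes a genuinely different route from the paper's. The paper disposes of the converse direction in two lines by invoking Theorem~3.6 of \cite{BHRS}: a gauge-invariant ideal of a graph algebra is uniquely determined by its intersection with $C^*(E)^\circ$ (since the pair $(H_J,B_J)$ can be read off from $J\cap C^*(E)^\circ$), so the gauge-invariant ideal $J$ generated by $I\cap C^*(E)^\circ$ must equal $I$. You instead bootstrap Proposition~\ref{prop:beta-inv}: realizing an arbitrary graph $E$ as the output of the Section~\ref{graph-sec} construction applied to the ultragraph with singleton ranges, you correctly observe that $\Delta=\emptyset$, $F^*=E^0$, $l(\alpha)=|\alpha|$, and hence that $\beta$ coincides with the gauge action, so Proposition~\ref{prop:beta-inv} literally \emph{is} the statement in this case. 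Your verification of the degenerate construction is accurate, and the reduction is not circular since nothing in Lemmas~\ref{lem:beta-inv1}--\ref{lem:ideal of C*(E)^b} or Lemma~\ref{lem:inv ideal generators} depends on Proposition~\ref{prop:gauge-inv}. What each approach buys: the paper's proof is shorter but outsources the real content to the external classification theorem of \cite{BHRS}; yours stays entirely within the machinery already developed in Section~\ref{ideal-sec} (the AF structure of the fixed-point algebra with centres in the diagonal, plus the averaging argument of Lemma~\ref{lem:inv ideal generators}), and in effect gives an independent proof of the relevant uniqueness half of the \cite{BHRS} classification. The only point worth making explicit is that Proposition~\ref{prop:beta-inv} is stated for the particular graph constructed from the particular ultragraph fixed at the start of Section~\ref{ideal-sec}, so you should note (as you essentially do) that its proof applies verbatim to the graph constructed from \emph{any} ultragraph, in particular the degenerate one.
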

\begin{proof}
Since $C^*(E)^{\circ}$ is in the fixed point algebra of the
gauge action, the ideal generated by a \Csa of $C^*(E)^{\circ}$
is gauge invariant. Conversely, let $I$ be a gauge-invariant
ideal of $C^*(E)$ and $J$ be the ideal generated by $I\cap
C^*(E)^{\circ}$. Since $I\cap C^*(E)^{\circ}\subset J\subset
I$, we have $J\cap C^*(E)^{\circ}=I\cap C^*(E)^{\circ}$.
Theorem~3.6 of \cite{BHRS} implies that each gauge-invariant
ideal of $C^*(E)$ is uniquely determined by its intersection
with $C^*(E)^\circ$. Since both $I$ and $J$ are gauge
invariant, it follows that $I=J$.
\end{proof}

\begin{proposition}\label{prop:action}
Let $I$ be an ideal of $C^*(\G)$. Then $I$ is invariant under the
gauge action on $C^*(\G)$ if and only if the ideal generated by
$\phi(I)$ is invariant under the gauge action on $C^*(E)$.
\end{proposition}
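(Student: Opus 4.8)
The plan is to deduce the proposition from the two preceding propositions together with the equivariance of $\phi$. First I would recall that, by Theorem~\ref{thm:fullcorner} and the standard correspondence between ideals of a \Ca and ideals of a full corner of it, the assignment $I \mapsto J_I := C^*(E)\phi(I)C^*(E)$ is a bijection from the ideals of $C^*(\G)$ onto the ideals of $C^*(E)$. Note that $J_I$ is exactly the ideal of $C^*(E)$ generated by $\phi(I)$, so the proposition amounts to the assertion that $I$ is gauge invariant in $C^*(\G)$ if and only if $J_I$ is gauge invariant in $C^*(E)$.

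The key step is to show that this bijection carries the circle orbit $z \mapsto \gamma^\G_z(I)$ (where $\gamma^\G$ denotes the gauge action on $C^*(\G)$) to the orbit $z \mapsto \beta_z(J_I)$. Since $\beta_z$ is an automorphism of $C^*(E)$ it maps the ideal generated by a set to the ideal generated by the image of that set, so using the intertwining identity $\beta_z \circ \phi = \phi \circ \gamma^\G_z$ recorded in Proposition~\ref{prop:phi} I would compute
\[
\beta_z(J_I) = C^*(E)\,\beta_z(\phi(I))\,C^*(E) = C^*(E)\,\phi\big(\gamma^\G_z(I)\big)\,C^*(E) = J_{\gamma^\G_z(I)}.
\]
Because $I \mapsto J_I$ is injective, it follows that $\gamma^\G_z(I) = I$ for every $z \in \T$ if and only if $\beta_z(J_I) = J_I$ for every $z \in \T$; that is, $I$ is gauge invariant in $C^*(\G)$ if and only if $J_I$ is $\beta$-invariant.

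Finally I would bridge $\beta$-invariance and gauge invariance for ideals of $C^*(E)$ using the work already done: Proposition~\ref{prop:beta-inv} says that $J_I$ is $\beta$-invariant precisely when $J_I$ is generated as an ideal by $J_I \cap C^*(E)^{\circ}$, and Proposition~\ref{prop:gauge-inv} says that an ideal of $C^*(E)$ has this last property precisely when it is gauge invariant. Chaining these equivalences with the previous paragraph yields that $I$ is gauge invariant in $C^*(\G)$ if and only if $J_I$ is gauge invariant in $C^*(E)$, which is the desired statement.

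I do not expect a serious obstacle here: the substance of the argument was packaged into Propositions~\ref{prop:beta-inv} and~\ref{prop:gauge-inv}, which is precisely why those were established first. The one point requiring care is that $\beta$ is \emph{not} the gauge action on $C^*(E)$ --- it fixes the generators $t_{\overline{x}}$ for $x \in W_+ \sqcup \Gamma_+$ --- so the passage from the $\beta$-invariance that $\phi$ transports over to genuine gauge invariance of $J_I$ genuinely relies on the common characterisation ``generated by its intersection with $C^*(E)^{\circ}$'' supplied by those two propositions.
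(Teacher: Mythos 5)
Your proposal is correct and follows essentially the same route as the paper: the paper's proof likewise sets $J$ equal to the ideal generated by $\phi(I)$, uses the equivariance from Proposition~\ref{prop:phi} to equate invariance of $I$ under the gauge action on $C^*(\G)$ with $\beta$-invariance of $J$, and then invokes Propositions~\ref{prop:beta-inv} and~\ref{prop:gauge-inv} to pass from $\beta$-invariance to gauge invariance of $J$. Your explicit computation $\beta_z(J_I)=J_{\gamma_z(I)}$ together with the injectivity of $I\mapsto J_I$ merely fills in a detail the paper leaves implicit.
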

\begin{proof}
Let $J$ be the ideal generated by $\phi(I)$ in $C^*(E)$. By
Proposition~\ref{prop:phi}, $I$ is invariant under $\gamma$ if
and only if $J$ is invariant under $\beta$. The latter
condition is equivalent to the gauge invariance of $J$ by
Proposition \ref{prop:beta-inv} and Proposition
\ref{prop:gauge-inv}.
\end{proof}

\section{Quotients by gauge-invariant ideals} \label{quotient-sec}

In this section, give a more explicit description of the
bijection between gauge-invariant ideals of $C^*(\G)$ and
gauge-invariant ideals of $C^*(E)$ stated in
Proposition~\ref{prop:action}. To do this we use the
classifications of gauge-invariant ideals in graph algebras
\cite[Theorem~3.6]{BHRS} and in ultragraph algebras
\cite[Theorem~6.12]{KMST}. We also describe quotients of
ultragraph algebras by gauge-invariant ideals as full corners
in graph algebras.

First recall from \cite[Section~6]{KMST} that
an admissible pair for $\G$ consists of a subset $\cH$ of $\G^0$
and a subset $V$ of $G^0$ such that:
\begin{itemize}
\item $\cH$ is an ideal: if $U_1, U_2 \in \cH$ then $U_1
  \cup U_2 \in \cH$, and if $U_1 \in \G^0$, $U_2 \in \cH$
  and $U_1 \subset U_2$, then $U_1 \in \cH$;
\item $\cH$ is hereditary: if $e \in \G^1$ and $\{s(e)\}
 \in \cH$, then $r(e) \in \cH$;
\item $\cH$ is saturated: if $v \in G^0_{\rg}$ and $r(e)
 \in \cH$ for all $e \in s^{-1}(v)$, then $\{v\} \in
 \cH$; and
\item $V \subset \cH^\fin_\infty$, where
\[\textstyle
\cH^\fin_\infty := \{v \in G^0 :
 |s^{-1}(v)| = \infty \text{ and }
 0 < |s^{-1}(v) \cap \{e \in \G^1 : r(e) \notin \cH\}| < \infty\}.
\]
\end{itemize}

Theorem~6.12 of \cite{KMST} shows that there is a bijection $I
\mapsto (\cH_I, V_I)$ between gauge-invariant ideals of
$C^*(\G)$ and admissible pairs for $\G$. Specifically, $\cH_I =
\{U \in \G^0 : p_U \in I\}$ and $V_I = \{v \in
(\cH_I)^\fin_\infty : p_v - \sum_{e \in s^{-1}(v), r(e) \notin
\cH_I} s_e s^*_e \in I\}$.

We must also recall from \cite{BPRS2000} some terminology for a
directed graph $E=(E^0,E^1,r_E,s_E)$. A subset $H$ of $E^0$ is
said to be \emph{hereditary} if $r_E(\alpha) \in H$ whenever
$\alpha \in E^1$ and $s_E(\alpha) \in H$. A hereditary subset
$H$ is said to be \emph{saturated} if $x \in H$ whenever $x \in
E^0_{\rg}$ and $r_E(\alpha) \in H$ for all $\alpha \in
s_E^{-1}(x)$. If $H \subset E^0$ is saturated hereditary, then
we define
\[
H^\fin_\infty := \{v \in E^0 : |s_E^{-1}(v)| = \infty \text{ and }
0 < |s_E^{-1}(v) \cap r_E^{-1}(E^0 \setminus H)| < \infty\}.
\]
Theorem~3.6 of \cite{BHRS} shows that there is a bijection $J
\mapsto (H_J, B_J)$ between gauge-invariant ideals of $C^*(E)$
and pairs $(H,B)$ such that $H \subset E^0$ is saturated
hereditary, and $B \subset H^\fin_\infty$. Specifically,
\[\textstyle
H_J =
\{x \in E^0 : q_x \in J\}
\quad\text{and}\quad
B_J = \{v \in
(H_J)^\fin_\infty : q_v - \sum_{\alpha \in s_E^{-1}(v),
r_E(\alpha) \notin H_J} t_\alpha t_\alpha^* \in J\}.
\]

\begin{definition}
For a saturated hereditary ideal $\cH \subset \G^0$, we define
$\theta(\cH) \subset E^0$ by
\[
\theta(\cH) := \{v \in G^0 : \{v\} \in \cH \}
\cup \{\omega \in \Delta : r'(\omega) \in \cH\}.
\]
\end{definition}

\begin{proposition}\label{prop:ideals Me}
If $I$ is a gauge-invariant ideal of $C^*(\G)$, and $J$ is the ideal of
$C^*(E)$ generated by $\phi(I)$, then $H_J = \theta(\cH_I)$,
$(H_J)^\fin_\infty = (\cH_I)^\fin_\infty$,
and $B_J = V_I$.
\end{proposition}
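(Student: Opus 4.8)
The plan is to establish the three claimed equalities in the order $H_J = \theta(\cH_I)$, then $(H_J)^\fin_\infty = (\cH_I)^\fin_\infty$, then $B_J = V_I$, using at each stage the explicit descriptions of $H_J$, $B_J$ in terms of $J$ recalled from \cite[Theorem~3.6]{BHRS}, the explicit descriptions of $\cH_I$, $V_I$ in terms of $I$ recalled from \cite[Theorem~6.12]{KMST}, and the fact that $\phi$ is an isomorphism onto the full corner $QC^*(E)Q$, so that for $a \in C^*(\G)$ we have $a \in I$ if and only if $\phi(a) \in J$. The starting point is the identities $\phi(p_{\{v\}}) = P_v = U_v U_v^*$, $\phi(p_{r(e_n)}) = Q_{e_n}$, $\phi(p_{r(\omega)}) = Q_\omega$, and $\phi(p_{r'(\omega)}) = Q'_\omega = U_\omega U_\omega^*$ (the last being Proposition~\ref{prop:r'(o)}).

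For $H_J = \theta(\cH_I)$: first I would show $x \in H_J$ iff $q_x \in J$. Since $J$ is generated by $\phi(I)$ and $\phi(I) \subset QC^*(E)Q$, and $q_x = U_x^* (U_x U_x^*) U_x$ with $U_x U_x^* \in QC^*(E)Q$ by Lemma~\ref{lem:UU^*inImage}, one gets $q_x \in J$ iff $U_x U_x^* \in J$ iff $U_x U_x^* \in \phi(I)$ (the corner $QC^*(E)Q$ being full, $J \cap QC^*(E)Q = \phi(I)$, since $\phi(I)$ is an ideal of the corner and the bijection between ideals of a \Ca and ideals of a full corner sends $\phi(I)$ to $J$ and back). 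For $x = v \in G^0$ this says $P_v \in \phi(I)$, i.e. $p_{\{v\}} \in I$, i.e. $\{v\} \in \cH_I$. For $x = \omega \in \Delta$ this says $U_\omega U_\omega^* = Q'_\omega = \phi(p_{r'(\omega)}) \in \phi(I)$, i.e. $r'(\omega) \in \cH_I$ (using that $r'(\omega) \in \G^0$ by Proposition~\ref{prop:r'(o)} and that the correspondence $U \mapsto p_U$ between $\G^0$ and these projections is such that $p_U \in I \iff U \in \cH_I$). Matching these two cases against the definition of $\theta(\cH_I)$ gives $H_J = \theta(\cH_I)$.

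For $(H_J)^\fin_\infty = (\cH_I)^\fin_\infty$: here the point is that the only infinite emitters in $E$ sitting inside $G^0$ are exactly the infinite emitters of $\G$ (Proposition~\ref{prp:reg verts} shows $E^0_\rg = G^0_\rg \sqcup \Delta$, so every $\omega \in \Delta$ has finite nonzero in-degree... I mean out-degree, and is never an infinite emitter), so $(H_J)^\fin_\infty \subset G^0$. For $v \in G^0$ with $|s^{-1}(v)| = \infty$, I would translate $|s_E^{-1}(v) \cap r_E^{-1}(E^0 \setminus H_J)|$: using $s_E^{-1}(v) = \bigsqcup_{s(e_n) = v}\{(e_n, x) : x \in X(e_n)\}$ and $H_J = \theta(\cH_I)$ together with Lemma~\ref{lem:r(e_n) and X_n} (which decomposes $r(e_n)$ as $(X(e_n) \cap G^0) \sqcup \bigsqcup_{\omega \in X(e_n) \cap \Delta} r'(\omega)$), one checks that the edge $(e_n,x)$ has range outside $H_J$ exactly when some vertex of $r(e_n)$ is outside $\cH_I$, i.e. when $r(e_n) \notin \cH_I$; and moreover $|s_E^{-1}(v) \cap r_E^{-1}(E^0 \setminus H_J)|$ is finite and nonzero iff $|s^{-1}(v) \cap \{e : r(e) \notin \cH_I\}|$ is (this uses the ``$H_J$ saturated hereditary'' bookkeeping: if $r(e_n) \in \cH_I$ then all of $X(e_n) \subset H_J$, while if $r(e_n) \notin \cH_I$ then at least one and only finitely many of the edges $(e_n, x)$, $x \in X(e_n)$, have range outside $H_J$ — finitely many since $X(e_n)$ is finite by Lemma~\ref{lem:Wnfin}). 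This gives $(H_J)^\fin_\infty = (\cH_I)^\fin_\infty$.

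Finally, for $B_J = V_I$: by the previous step both sit inside $(\cH_I)^\fin_\infty = (H_J)^\fin_\infty \subset G^0_\rg{}^c \cap G^0$, so fix such a $v$. Then $v \in B_J$ iff $q_v - \sum_{\alpha \in s_E^{-1}(v), r_E(\alpha) \notin H_J} t_\alpha t_\alpha^* \in J$, and $v \in V_I$ iff $p_v - \sum_{e \in s^{-1}(v), r(e) \notin \cH_I} s_e s_e^* \in I$. I would apply $\phi$ to the latter and show it equals the former (mod the translation $J \cap QC^*(E)Q = \phi(I)$): $\phi(p_v) = P_v = U_vU_v^* = q_v$ once one conjugates appropriately — more precisely $\phi(p_v) = P_v = U_vU_v^* \le q_{s_E(f_v)}$, so rather than $q_v$ itself one should work inside the corner; the cleanest route is to show $\phi(p_v - \sum_{r(e)\notin\cH_I} s_e s_e^*) = U_v\bigl(q_v - \sum_{s_E(\alpha)=v, r_E(\alpha)\notin H_J} t_\alpha t_\alpha^*\bigr)U_v^*$, using $\phi(s_e) = S_e$, Lemma~\ref{lem:SeSe*} (which gives $S_{e_n}S_{e_n}^* = U_{s(e_n)}(\sum_{x\in X(e_n)} t_{(e_n,x)}t_{(e_n,x)}^*)U_{s(e_n)}^*$), and the fact that the edges $(e_n, x)$ with $x \in X(e_n)$ and $r_E(e_n,x) = x \notin H_J$ are precisely those counted on the ultragraph side. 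Then, since conjugation by the partial isometry $U_v$ (with $U_v^* U_v = q_v$, $U_v U_v^* = P_v \in QC^*(E)Q$) sets up a bijection between the relevant hereditary subalgebras, membership of the $\G$-side element in $I$ is equivalent to membership of the $E$-side element in $J$; this yields $v \in V_I \iff v \in B_J$.

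The main obstacle I anticipate is the bookkeeping in the $(H_J)^\fin_\infty = (\cH_I)^\fin_\infty$ step and the careful handling of the partial isometry $U_v$ in the last step: one must be scrupulous about the distinction between $q_v$ and $P_v = U_v U_v^*$ (these are equal only when $f_v$ has length zero, i.e. $v \in W_0$), and about the fact that $\phi(I) = J \cap QC^*(E)Q$ rather than $J$ itself, so all the ``is in $I$'' $\iff$ ``is in $J$'' passages must go through the full-corner correspondence of Theorem~\ref{thm:fullcorner}. Everything else is a fairly mechanical unwinding of definitions combined with Lemmas~\ref{lem:r(e_n) and X_n}, \ref{lem:Q_o}, \ref{lem:SeSe*}, \ref{lem:Wnfin} and Proposition~\ref{prp:reg verts}.
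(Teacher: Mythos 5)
Your proposal is correct and follows essentially the same route as the paper: the same case split $x\in G^0$ versus $x\in\Delta$ via $q_x\in J\iff U_xU_x^*\in J$ for the first equality, the same use of Lemma~\ref{lem:r(e_n) and X_n} and finiteness of the sets $\Xset{n}$ for the second, and the same conjugation by $U_v$ together with Lemma~\ref{lem:SeSe*} for the third. The one place to tighten is your final displayed identity: $\phi\bigl(p_v-\sum_{n\in L}s_{e_n}s_{e_n}^*\bigr)$ equals $U_v\bigl(q_v-\sum_{n\in L,\,x\in\Xset{n}}t_{\edge(n,x)}t_{\edge(n,x)}^*\bigr)U_v^*$, whose inner sum runs over \emph{all} $x\in\Xset{n}$, so it agrees with $q_v-\sum_{s_E(\alpha)=v,\ r_E(\alpha)\notin H_J}t_\alpha t_\alpha^*$ only modulo the terms with $x\in\Xset{n}\cap H_J$, which lie in $J$ --- exactly the add-and-subtract step the paper makes explicit, and harmless since only membership in $J$ is at stake.
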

\begin{proof}
We use the notation established
in Section~\ref{graph-sec} and Section~\ref{corner-sec}.
Let $x \in E^0$.
Since $q_x = U_x^*U_x$,
we have
\[
x \in H_J \iff q_x \in J \iff U_x \in J
 \iff U_xU_x^* \in J.
\]
For $v \in G^0$, we have $U_vU_v^* = \phi(p_v)$. Hence
\[
U_vU_v^* \in J \iff p_v \in I \iff \{v\} \in \cH_I .
\]
Thus $v \in H_J$ if and only if $\{v\} \in \cH_I$. Similarly,
for $\omega \in \Delta$, we have $Q'_\omega = U_\omega
U^*_\omega$ by Definition~\ref{dfn:Q'}, and
Proposition~\ref{prop:r'(o)} implies that $\phi(p_{r'(\omega)})
= Q'_\omega$, so
\[
U_\omega U_\omega^* \in J \iff p_{r'(\omega)} \in I \iff r'(\omega) \in \cH_I .
\]
Thus $\omega \in H_J$ if and only if $r'(\omega) \in \cH_I$.
This shows that $H_J = \theta(\cH_I)$.

Next, we show $(H_J)^\fin_\infty = (\cH_I)^\fin_\infty$. Since
each $\omega \in \Delta$ satisfies $|s_E^{-1}(\omega)| <
\infty$, we have $(H_J)^\fin_\infty \subset G^0$. Fix $v \in
G^0$. We have $s_E^{-1}(v) = \bigsqcup_{s(e_n) = v}
\{\edge(n,x): x\in \Xset{n}\}$. Since each $\Xset{n}$ is
finite, $|s_E^{-1}(v)| = \infty$ if and only if $|s^{-1}(v)| =
\infty$. Lemma~\ref{lem:r(e_n) and X_n} and the conclusion of
the preceding paragraph imply that $r(e_n) \in \cH_I$ if and
only if $\Xset{n} \subset H_J$. Hence
\begin{equation}\label{eq:HJ<->HI}
0 < |s_E^{-1}(v) \cap r_E^{-1}(E^0 \setminus H_J)| < \infty
\ \Longleftrightarrow\
0 < |s^{-1}(v) \cap \{e \in \G^1 : r(e) \notin \cH_I\}| < \infty
\end{equation}
Thus $(H_J)^\fin_\infty = (\cH_I)^\fin_\infty$.

Finally we show $B_J = V_I$. Fix $v \in (H_J)^\fin_\infty =
(\cH_I)^\fin_\infty$. Let $L := \{n : s(e_n) = v, r(e_n) \notin
\cH_I\}$. By~\eqref{eq:HJ<->HI}, we have
\[
\{\alpha \in
s_E^{-1}(v) : r_E(\alpha) \notin H_J\} = \{\edge(n,x) : n \in L, x
\in \Xset{n} \setminus H_J\}.
\]
For $n \in L$ and $x \in \Xset{n} \cap H_J$, we have
$t_{\edge(n,x)}^*t_{\edge(n,x)}= q_x \in J$, and hence
$t_{\edge(n,x)} t_{\edge(n,x)}^* \in J$. Thus
\begin{align*}
q_v - \sum_{\alpha \in s_E^{-1}(v), r_E(\alpha) \notin H_J} t_\alpha t^*_\alpha
  &= q_v - \sum_{\substack{n \in L\\ x \in \Xset{n} \setminus H_J}} t_{\edge(n,x)} t_{\edge(n,x)}^* \\
  &= q_v - \sum_{\substack{n \in L\\ x \in \Xset{n}}} t_{\edge(n,x)} t_{\edge(n,x)}^* + \sum_{\substack{n \in L\\ x \in \Xset{n} \cap H_J}} t_{\edge(n,x)} t_{\edge(n,x)}^*
\end{align*}
belongs to $J$ if and only if
\begin{equation}\label{eq:gap in J}
q_v - \sum_{n \in L, x \in \Xset{n}} t_{\edge(n,x)} t_{\edge(n,x)}^* \in J.
\end{equation}
Moreover, \eqref{eq:gap in J} holds if and only if $p_v -
\sum_{n \in L} s_{e_n} s_{e_n}^* \in I$ because
\[
\phi\Big(p_v - \sum_{n \in L} s_{e_n} s_{e_n}^*\Big)
=P_v -  \sum_{n \in L} S_{e_n} S_{e_n}^*
=U_v\Big(q_v - \sum_{n \in L, x \in \Xset{n}} t_{\edge(n,x)} t_{\edge(n,x)}^*\Big)U_v^*
\]
by Lemma~\ref{lem:SeSe*}. Hence $B_J = V_I$.
\end{proof}

\begin{corollary}
Let $I$ be a gauge-invariant ideal of $C^*(\G)$.
Then the isomorphism $\phi : C^*(\G) \to Q C^*(E) Q$
restricts to an isomorphism of $I$ onto $Q J Q$,
where $J$ is the unique gauge-invariant ideal of $C^*(E)$
such that $H_J = \theta(\cH_I)$ and $B_J = V_I$.
\end{corollary}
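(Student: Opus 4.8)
The plan is to identify the gauge-invariant ideal $J$ named in the statement with the ideal of $C^*(E)$ generated by $\phi(I)$, and then to appeal to the standard description of ideals of a full corner.

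First I would set $J := C^*(E)\phi(I)C^*(E)$, the ideal of $C^*(E)$ generated by $\phi(I)$. Since $I$ is gauge-invariant, Proposition~\ref{prop:action} shows that $J$ is a gauge-invariant ideal of $C^*(E)$. By Proposition~\ref{prop:ideals Me} this ideal satisfies $H_J = \theta(\cH_I)$ and $B_J = V_I$, and by \cite[Theorem~3.6]{BHRS} a gauge-invariant ideal of $C^*(E)$ is uniquely determined by the pair $(H_J, B_J)$. Hence $J$ is precisely the ideal described in the statement, and it remains to show that $\phi$ carries $I$ isomorphically onto $QJQ$.

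By Theorem~\ref{thm:fullcorner}, $\phi$ is a $*$-isomorphism of $C^*(\G)$ onto the full corner $QC^*(E)Q$; in particular $\phi$ is isometric, so $\phi(I)$ is a closed two-sided ideal of $QC^*(E)Q$, and since $Q$ acts as a unit on $QC^*(E)Q$ we have $\phi(I) = Q\phi(I)Q \subseteq QJQ$. For the reverse inclusion I would use the elementary fact that, for an ideal $K$ of a corner $QC^*(E)Q$ generating the ideal $J = C^*(E)KC^*(E)$ of $C^*(E)$, one has $QJQ = K$: for $a,b \in C^*(E)$ and $k \in K$ the relations $Qk = kQ = k$ give $Q(akb)Q = (QaQ)\,k\,(QbQ)$, which lies in $K$ because $QaQ, QbQ \in QC^*(E)Q$ and $K$ is an ideal of $QC^*(E)Q$; compressing a general element of the closed span $C^*(E)KC^*(E)$ by $Q$ and using that $K$ is closed then yields $QJQ \subseteq K$. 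Taking $K = \phi(I)$ gives $QJQ = \phi(I)$.

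Combining the two preceding paragraphs, the restriction $\phi|_I \colon I \to QJQ$ is a bijective $*$-homomorphism, hence a $*$-isomorphism, for the unique gauge-invariant ideal $J$ of $C^*(E)$ with $H_J = \theta(\cH_I)$ and $B_J = V_I$. There is no real obstacle here: the only point requiring care is the compression identity $Q(akb)Q = (QaQ)\,k\,(QbQ)$ together with the passage to the closure, and these are routine (the fullness of $Q$ from Lemma~\ref{lem:fullproj} is not even needed for this corollary, only for the Morita equivalence already recorded in Theorem~\ref{thm:fullcorner}).
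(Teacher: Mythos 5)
Your proposal is correct and follows essentially the same route as the paper: the paper also takes $J$ to be the ideal generated by $\phi(I)$, asserts $\phi(I) = QJQ$ (which you verify via the standard compression argument), and identifies $J$ via Propositions~\ref{prop:action} and~\ref{prop:ideals Me} together with the uniqueness from \cite[Theorem~3.6]{BHRS}. You have merely supplied the routine details that the paper leaves implicit, and your observation that fullness of $Q$ is not needed for the compression identity is accurate.
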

\begin{proof}
We have $\phi(I) = Q J Q$
where $J$ is the ideal of $C^*(E)$
generated by $\phi(I)$.
By Proposition~\ref{prop:action} and
Proposition~\ref{prop:ideals Me},
$J$ is the gauge-invariant ideal of $C^*(E)$
such that $H_J = \theta(\cH_I)$ and $B_J = V_I$.
\end{proof}

Using Proposition~\ref{prop:ideals Me} and the results of
\cite{BHRS}, we may now describe quotients of ultragraph
algebras by gauge-invariant ideals as full corners in graph
algebras.

\begin{definition}
Let $I$ be a gauge-invariant ideal of $C^*(\G)$, and let
$\cH_I$, $V_I$, and $\theta(\cH_I)$ be as above. We define a
directed graph $E_I = (E_I^0,E_I^1,r_{E_I},s_{E_I})$ as
follows. The vertex and edge sets are defined by
\begin{align*}
E_I^0 &:= (E^0 \setminus \theta(\cH_I)) \sqcup
\{\wt{x} : x \in (\cH_I)^\fin_\infty\setminus V_I\},\text{ and} \\
E_I^1 &:= r_E^{-1}(E^0 \setminus \theta(\cH_I))
\sqcup \{\wt{\alpha} :
\alpha \in r_E^{-1}((\cH_I)^\fin_\infty\setminus V_I)\subset E^1\}.
\end{align*}
The range and source of $e \in r_E^{-1}(E^0 \setminus
\theta(\cH_I))$ in $E_I$ are the same as those in $E$. For
$\alpha \in r_E^{-1}((\cH_I)^\fin_\infty\setminus V_I)$ we
define $s_{E_I}(\wt{\alpha}) := s_E(\alpha)$, and
$r_{E_I}(\wt{\alpha}) := \wt{x}$ where $x = r_E(\alpha) \in
(\cH_I)^\fin_\infty\setminus V_I$.
\end{definition}

\begin{corollary}\label{cor:Quotients Me}
With the notation above, $C^*(\G)/I$ is isomorphic to a full
corner of $C^*(E_I)$.
\end{corollary}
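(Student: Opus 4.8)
The plan is to combine Theorem~\ref{thm:fullcorner} with the quotient theory for graph $C^*$-algebras from \cite{BHRS}. By Proposition~\ref{prop:ideals Me}, if $J$ is the ideal of $C^*(E)$ generated by $\phi(I)$, then $J$ is the gauge-invariant ideal with $H_J = \theta(\cH_I)$ and $B_J = V_I$, and $\phi$ restricts to an isomorphism $C^*(\G)/I \cong \bigl(QC^*(E)Q\bigr)/\bigl(QJQ\bigr)$. Since $Q$ is a full projection and $J$ is an ideal, $\bigl(QC^*(E)Q\bigr)/\bigl(QJQ\bigr) \cong \overline{Q}\,\bigl(C^*(E)/J\bigr)\,\overline{Q}$, where $\overline{Q}$ denotes the image of $Q$ in $\mathcal{M}(C^*(E)/J)$; moreover this corner is full in $C^*(E)/J$ because $Q$ is full in $C^*(E)$ and fullness passes to quotients. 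So it suffices to identify $C^*(E)/J$ with $C^*(E_I)$.

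**The identification of the quotient.** Here I would invoke the description of quotients of graph algebras by gauge-invariant ideals: by \cite[Corollary~3.5]{BHRS} (or the equivalent statement in terms of the graph $E \setminus (H_J, B_J)$), the quotient $C^*(E)/J$ is canonically isomorphic to $C^*(E/(H_J,B_J))$, where $E/(H,B)$ is the graph with vertex set $(E^0 \setminus H) \sqcup \{\widetilde{x} : x \in H^\fin_\infty \setminus B\}$, edge set $r_E^{-1}(E^0 \setminus H) \sqcup \{\widetilde{\alpha} : \alpha \in r_E^{-1}(H^\fin_\infty \setminus B)\}$, with source and range maps as in the standard construction. Plugging in $H_J = \theta(\cH_I)$, $H_J^\fin_\infty = (\cH_I)^\fin_\infty$ (by Proposition~\ref{prop:ideals Me}), and $B_J = V_I$, one sees that $E/(H_J, B_J)$ is precisely the graph $E_I$ of the definition immediately preceding this corollary. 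Thus $C^*(E)/J \cong C^*(E_I)$.

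**Assembling the pieces.** Composing the isomorphisms gives
\[
C^*(\G)/I \;\cong\; \bigl(QC^*(E)Q\bigr)/\bigl(QJQ\bigr) \;\cong\; \overline{Q}\,C^*(E_I)\,\overline{Q},
\]
and since $\overline{Q}$ is full in $\mathcal{M}(C^*(E_I))$, this exhibits $C^*(\G)/I$ as a full corner of $C^*(E_I)$, as claimed.

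**The main obstacle** is making the corner-passes-to-quotient argument precise at the level of multiplier algebras: one must check that the image $\overline{Q}$ of the multiplier projection $Q$ under the canonical map $\mathcal{M}(C^*(E)) \to \mathcal{M}(C^*(E)/J)$ (which exists because $J$ is an ideal and $C^*(E)$ is $\sigma$-unital, hence $C^*(E)/J$ is as well) is still a full projection, and that $QC^*(E)Q$ maps onto $\overline{Q}\,(C^*(E)/J)\,\overline{Q}$ with kernel exactly $QJQ = QC^*(E)Q \cap J$. Both facts are standard: fullness of $\overline{Q}$ follows since the ideal it generates in $C^*(E)/J$ is the image of the ideal generated by $Q$ in $C^*(E)$, which is all of $C^*(E)$; and the kernel computation is the usual fact that $Q(\cdot)Q$ is compatible with quotients by ideals. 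The remaining work — verifying that the data $(\theta(\cH_I), (\cH_I)^\fin_\infty, V_I)$ really does reproduce the graph $E_I$ as defined — is immediate from Proposition~\ref{prop:ideals Me} together with the explicit form of $E/(H_J,B_J)$, and so requires no further argument.
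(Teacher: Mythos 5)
Your proposal is correct and follows essentially the same route as the paper: pass to the quotient $C^*(E)/J$, identify it with $C^*(E_I)$ via \cite[Corollary~3.5]{BHRS} using the data $H_J=\theta(\cH_I)$ and $(H_J)^\fin_\infty\setminus B_J=(\cH_I)^\fin_\infty\setminus V_I$ from Proposition~\ref{prop:ideals Me}, and transport the full corner $Q C^*(E) Q$ through the extension of the quotient map to multiplier algebras. The paper cites \cite[Corollary~2.51]{TFB} for the multiplier-algebra step that you verify by hand, but the argument is the same.
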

\begin{proof}
Let $J$ be the ideal of $C^*(E)$ generated by $\phi(I)$. By
Theorem~\ref{thm:fullcorner} the homomorphism $\phi$ induces an
isomorphism $\phi_I \colon C^*(\G)/I \to \overline{Q}
(C^*(E)/J) \overline{Q}$ where $\overline{Q} \in
\mathcal{M}(C^*(E)/J)$ is the image of $Q \in \mathcal{M}
(C^*(E))$ under the extension of the quotient map to multiplier
algebras (see \cite[Corollary~2.51]{TFB}). In particular, the
projection $\overline{Q}$ is full.
By Proposition~\ref{prop:ideals Me} we obtain $H_J =
\theta(\cH_I)$ and $(H_J)^\fin_\infty \setminus B_J
=(\cH_I)^\fin_\infty \setminus V_I$. By
\cite[Corollary~3.5]{BHRS} there is an isomorphism $\psi\colon
C^*(E)/J \to C^*(E_I)$. Let $Q_I \in \mathcal{M} (C^*(E_I))$ be
the image of $\overline{Q}$ under $\psi$. Then $Q_I$ is full,
and $\psi \circ \phi_I \colon C^*(\G)/I \to Q_I C^*(E_I) Q_I$
is the desired isomorphism.
\end{proof}

\end{document}